\tikzset{
  symbol/.style={
    draw=none,
    every to/.append style={
      edge node={node [sloped, allow upside down, auto=false]{$#1$}}}
  }
}
\tikzset{ext/.style={circle, draw,inner sep=1pt},int/.style={circle,draw,fill,inner sep=1pt},nil/.style={inner sep=1pt}}
\tikzset{exte/.style={circle, draw,inner sep=3pt},inte/.style={circle,draw,fill,inner sep=3pt}}
\tikzset{diagram/.style={matrix of math nodes, row sep=3em, column sep=2.5em, text height=1.5ex, text depth=0.25ex}}
\tikzset{diagram2/.style={matrix of math nodes, row sep=0.5em, column sep=0.5em, text height=1.5ex, text depth=0.25ex}}
\newcommand{\iid}{\mathrm{id}}
\newcommand{\op}{\operatorname}
\newcommand{\Map}{\op{Map}}
\newcommand{\flower}{\text{\floweroneright}}
\renewcommand{\U}{\mathcal{U}}
\newcommand{\C}{\mathcal{C}}
\newcommand{\Cyc}{\op{Cyc}}
\newcommand{\Com}{\mathsf{Com}}
\newcommand{\Lie}{\mathsf{Lie}}
\newcommand{\R}{\mathbb{R}}
\newcommand{\Q}{\mathbb{Q}}
\newcommand{\Graphs}{\mathsf{Graphs}}
\newcommand{\Diff}{\mathrm{Diff}}
\newcommand{\MAAA}{\mathcal M_A}
\newcommand{\dgVect}{dg\mathcal Vect}
\newcommand{\bCyc}{\overline{Cyc}}
\newcommand{\IBL}{IBL}
\newcommand{\fg}{{\mathfrak g}}
\newcommand{\K}{{\mathbb K}} 
\newcommand{\Tot}{\op{Tot}}
\newcommand{\FM}{\mathsf{FM}}
\newcommand{\D}{{\mathbf D}}
\newcommand{\Z}{{\mathbb Z}}
\newcommand{\GC}{\mathsf{GC}}
\DeclareSymbolFont{cyrletters}{OT2}{wncyr}{m}{n}
\DeclareMathSymbol{\Sha}{\mathbin}{cyrletters}{"78}
\newtheorem{Thm}{Theorem}[section]
\newtheorem{Prop}[Thm]{Proposition}
\newtheorem{Lem}[Thm]{Lemma}
\newtheorem{Rem}[Thm]{Remark}
\newcommand{\Th}{\mathit{Th}}
\newcommand{\evd}{f} 
\newcommand{\tp}{T} 
\newcommand{\hCom}{\widehat{\Com}}
\newcommand{\bCC}{\overline{CC}}
\newcommand{\bF}{{\mathbb F}}
\newcommand{\Der}{\mathrm{Der}}
\renewcommand{\L}{\mathbb{L}}
\title{String topology and configuration spaces of two points}
\author{Florian Naef}
\address{Department of Mathematics \\ Massachusetts Institute of Technology \\
182 Memorial Dr \\
Cambridge, MA 02142, USA}
\email{naeffl@mit.edu}
\author{Thomas Willwacher}
\address{Department of Mathematics \\ ETH Zurich \\
R\"amistrasse 101 \\
8092 Zurich, Switzerland}
\email{thomas.willwacher@math.ethz.ch}
\date{}
\begin{document}
\begin{abstract}
    Given a closed manifold $M$. We give an algebraic model for the Chas-Sullivan product and the Goresky-Hingston coproduct. In the simply-connected case, this admits a particularly nice description in terms of a Poincaré duality model of the manifold, and involves the configuration space of two points on $M$. We moreover, construct an $IBL_\infty$-structure on (a model of) cyclic chains on the cochain algebra of $M$, such that the natural comparison map to the $S^1$-equivariant loop space homology intertwines the Lie bialgebra structure on homology. The construction of the coproduct/cobracket depends on the perturbative partition function of a Chern-Simons type topological field theory.
    Furthermore, we give a construction for these string topology operations on the absolute loop space (not relative to constant loops) in case that $M$ carries a non-vanishing vector field and obtain a similar description.
    Finally, we show that the cobracket is sensitive to the manifold structure of $M$ beyond its homotopy type. More precisely, the action of $\Diff(M)$ does not (in general) factor through $\op{aut}(M)$.
\end{abstract}

\maketitle

\tableofcontents

\section{Introduction}
String topology is the study of algebraic structures on the the free loop space $LM$ of a smooth manifold $M$, as initiated by Chas and Sullivan \cite{ChasSullivan}.
In this paper we will consider the following subset of operations on the real or rational (co)homology.
First, the BV operator $\Delta:H_\bullet(LM)\to H_{\bullet+1}(LM)$ is the just the action of the fundamental chain of $S^1$, using the $S^1$ action on $LM$ by reparameterization of the loop, $S^1\times LM\to LM$.
Second, one has the string product $H_\bullet(LM)\otimes H_\bullet(LM)\to H_{\bullet-n}(LM)$, where $n$ is the dimension of the smooth manifold $M$.
The product is obtained by intersecting the loops at their basepoints, see below for more details. 
The product and the operation $\Delta$ generate a Batalin-Vilkovisky algebra structure on the homology $H_{\bullet+n}(LM)$.
Third, there is the string coproduct $H_{\bullet+n-1}(LM,M)\to H_\bullet(LM,M)\otimes H_\bullet(LM,M)$, by taking a self-intersection of the loop \cite{GoreskyHingston}. It is defined only on the relative homology with respect to constant loops.
We refer to section \ref{sec:stringtopology} below for a more detailed description of these operations.

Finally, we consider the homotopy $S^1$-quotient of $LM$, which we denote by $LM_{S^1}$.
Chas and Sullivan \cite{ChasSullivan2} describe a Lie bialgebra structure on the equivariant homology $H_\bullet^{S_1}(LM,M)$ of $LM$ relative to the constant loops, extending earlier work by Turaev \cite{Turaev}. More precisely, the Lie bialgebra structure is degree shifted, such that both bracket and cobracket have degree $2-n$.
Similarly, given a Poincaré duality model for $M$, a Lie bialgebra structure on the reduced equivariant homology $\bar H_\bullet^{S^1}(LM)$ has been described in \cite{ChenEshmatovGan}.
It should be remarked that the latter construction is completely algebraic and "formal", while the Chas-Sullivan definition is "topological", by intersecting loops.

By duality we obtain the corresponding dual operations on the cohomology of the loop spaces considered above.
The (co)homology can furthermore be efficiently computed.
To this end let $A$ be a differential graded commutative algebra (dgca) model for $M$, i.e., $A$ is quasi-isomorphic to the dgca of differential forms on $M$.
Then an iterated integral construction yields a map 
\begin{equation}\label{equ:ii1}
HH_{\bullet}(A,A) \to H^{-\bullet}(LM)
\end{equation}
from the Hochschild homology of $A$ with coefficients in $A$ to the cohomology of $LM$. Similarly, one obtains maps 
\begin{equation}\label{equ:ii2}
\overline{HH}_{\bullet}(A,A) \to H^{-\bullet}(LM,M)
\end{equation}
from a reduced version of the Hochschild homology and 
\begin{equation}\label{equ:ii3}
H\bCyc_{\bullet}(\bar A) \to \bar H^{-\bullet}_{S^1}(LM)
\end{equation}
from the homology of the (reduced) cyclic words in $A$.

\begin{Thm}[\cite{Jones,CohenJones02,ChenEshmatovGan}]
If $M$ is a simply connected closed manifold then the maps \eqref{equ:ii1}, \eqref{equ:ii2}, \eqref{equ:ii3} are isomorphisms.
\end{Thm}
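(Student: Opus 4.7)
The plan is to establish \eqref{equ:ii1} via Jones's cosimplicial model of the free loop space, and then to obtain \eqref{equ:ii2} and \eqref{equ:ii3} as compatible reductions.

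First I would exploit the simplicial decomposition $S^1 \simeq \Delta^1/\partial\Delta^1$ to present $LM = \Map(S^1,M)$ as the totalization of the cosimplicial space $[n]\mapsto M^{n+1}$, with coface and codegeneracy maps induced from those of $\Delta^1$. Applying cochains levelwise yields a cosimplicial cochain complex whose totalization computes $C^*(LM)$. The Künneth map is then a levelwise quasi-isomorphism onto the cosimplicial dg vector space $[n]\mapsto A^{\otimes(n+1)}$, whose totalization is by construction the Hochschild chain complex $CH_\bullet(A,A)$ (in cohomological conventions). Chen's iterated integrals provide an explicit natural chain-level realization of \eqref{equ:ii1}, with the integration taken over the products of simplices that parametrize the cosimplicial faces.

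The crucial step is to promote the levelwise Künneth quasi-isomorphism to a quasi-isomorphism on totalizations. This is the convergence of the Bousfield-Kan spectral sequence of the cosimplicial space $M^{\bullet+1}$, which I expect to be the main obstacle and the sole place where the hypothesis enters: simple connectivity of $M$ (together with the finite-type assumption) ensures that the cosimplicial object is nilpotent, so that the spectral sequence converges strongly to $H^*(LM)$, yielding \eqref{equ:ii1}. Equivalently, one can compare two spectral sequences, one arising from the word-length filtration on the Hochschild complex and one from the Postnikov-type filtration on $LM$, which agree on $E_2$ and converge under simple connectivity.

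For \eqref{equ:ii2}, the inclusion $M \hookrightarrow LM$ as constant loops corresponds under the model to the length-zero augmentation $A \hookrightarrow CH_\bullet(A,A)$; taking cofibers on both sides identifies $\overline{HH}_\bullet(A,A)$ with $H^{-\bullet}(LM,M)$. For \eqref{equ:ii3}, I would identify the loop-rotation $S^1$-action on $LM$ with the cyclic action on Hochschild chains generated by the Connes operator $B$, using the natural cyclic structure on the cosimplicial model. Taking $S^1$-homotopy coinvariants (equivalently, passing to the normalized cyclic complex of $\bar A$) then produces $\bCyc_\bullet(\bar A)$ on the algebraic side and $\bar C^*_{S^1}(LM)$ on the topological side, and the previously established quasi-isomorphism descends to \eqref{equ:ii3} by compatibility of the reduction with the cyclic structure.
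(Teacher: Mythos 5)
This theorem is cited in the paper (from Jones, Cohen--Jones, and Chen--Eshmatov--Gan) and not reproved, so there is no internal proof to compare against. Your outline is the standard Jones argument and is essentially right for \eqref{equ:ii1} and \eqref{equ:ii2}: the cosimplicial model $[n]\mapsto M^{n+1}$ for $LM$, the levelwise K\"unneth replacement by $A^{\otimes(n+1)}$, and the convergence of the resulting Eilenberg--Moore/Bousfield--Kan spectral sequence under simple connectivity are exactly the ingredients. (One small orientation slip: the inclusion of constant loops $M\hookrightarrow LM$ corresponds on cochains to the \emph{projection} of Hochschild chains onto words of length zero, while the inclusion of length-zero words models pullback along evaluation $LM\to M$; since these are split complementary summands your identification of $\overline{HH}_\bullet(A,A)$ with $H^{-\bullet}(LM,M)$ is unaffected.)

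The genuine gap is in \eqref{equ:ii3}. Because one is working with cochains, the complex computing $\bar H^{-\bullet}_{S^1}(LM)$ is the reduced \emph{negative} cyclic complex $\bCC(A)$ of section \ref{sec:intro hochschild}, which is the homotopy \emph{fixed points} rather than coinvariants of the circle action on the cochain model of $LM$. The reduced cyclic words $\bCyc(\bar A)$ are not that complex; one has to prove that the map $\bCyc(\bar A)\to\bCC(A)$ given by applying Connes' $B$ is a quasi-isomorphism. This is not a formality of ``compatibility of the reduction with the cyclic structure.'' It requires Goodwillie's theorem that periodic cyclic homology of a connected algebra is that of a point, so that negative cyclic homology coincides with ordinary cyclic homology, which is in turn computed by the Connes complex of cyclic words. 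The paper records precisely this input as Proposition \ref{prop:cycqiso}; your argument for \eqref{equ:ii3} silently assumes what that proposition actually proves.
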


\subsection{Statement of results}

The purpose of this paper is to understand the string topology operations described above on the objects on the left-hand side of \eqref{equ:ii1}, \eqref{equ:ii2}, \eqref{equ:ii3}, in the case that $M$ is a smooth compact manifold without boundary.
More concretely, our results are as follows.
First, we provide a (slightly) new version of the construction of the string product and coproduct, using the compactified configuration spaces $\FM_M(2)$ of two points on $M$, together with the inclusion of the boundary
$\partial \FM_M(2)=UTM\to \FM_M(2)$. This inclusion can be seen as the simplest instance of the action of the little disks operad on the configuration space of (framed) points on $M$. Our approach allows us to use existing models for the configuration spaces of points \cite{Idrissi,CamposWillwacher} to conduct computations in string topology.

We shall work in the cohomological setting (on $LM$), i.e., we consider the Hochschild homology, not cohomology, of our dgca model $A$ for $M$. Suppose first that $M$ is simply connected.
Then we may take for $A$ a Poincar\'e duality model for $M$ \cite{LambrechtsStanley}. In particular, $A$ comes equipped with a diagonal $\D=\sum \D'\otimes \D''\in A\otimes A$, such that $\sum a\D'\otimes \D''=\sum \pm \D'\otimes a\D''$ for all $a\in A$.
We may use this to construct a (degree shifted) co-BV structure on the reduced Hochschild complex $\bar C(A)=\bar C(A,A)$.
Concretely, the BV operator is the Connes-Rinehart differental on the Hochschild complex, given by the formula (cf. \cite[(2.1.7.3)]{Loday})
\begin{equation}\label{equ:connesB}
B(a_0,\dots, a_n) = \sum_{j=0}^n \pm (1, a_j, a_{j+1},\dots, a_0, a_1, \dots , a_n)
\end{equation}
The coproduct dual to the string product and the cup product on Hochschild cohomology is 
\begin{equation}\label{equ:intro product}
(a_0,\dots, a_n) \mapsto 
\sum_{j=0}^n
\sum
\pm 
(a_0\D',\dots, a_j) \otimes (\D'',a_{j+1},\dots, a_n).
\end{equation}
As a first application we then obtain another proof of the following result of Cohen and Voronov.
\begin{Thm}[\cite{CohenVoronov}]\label{thm:main_1}
For $M$ a closed simply connected manifold the map \eqref{equ:ii1} is an isomorphism of co-BV algebras.
\end{Thm}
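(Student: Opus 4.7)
The plan is to exploit the fact that the underlying vector-space isomorphism is already supplied by the Jones--Cohen--Jones theorem stated just above, so only compatibility with the co-BV structure needs verification. The match of BV operators is classical: the Connes--Rinehart differential \eqref{equ:connesB} on Hochschild chains is intertwined by iterated integrals with the rotation $S^1$-action on $LM$. The substantive content is therefore the identification of the coproduct \eqref{equ:intro product} with the coproduct on $H^{-\bullet}(LM)$ dual to the Chas--Sullivan string product.

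First I would give a geometric avatar of the string product using the compactified configuration space $\FM_M(2)$, as anticipated in the introduction. The string product on $H_\bullet(LM)$ is defined by pulling back along the diagonal $M \hookrightarrow M\times M$ via the basepoint evaluation $LM\times LM \to M\times M$, then concatenating. The boundary inclusion $\partial \FM_M(2) = UTM \hookrightarrow \FM_M(2)$ serves as a resolution of the diagonal, and the two-point evaluation $LM \to \FM_M(2)$ associated to any pair of cyclically ordered times provides a chain-level model for the dual coproduct: split the loop at a chosen interior time, weighted by the Poincaré dual of the diagonal.

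Next I would translate this to the algebraic side. Under Chen iterated integrals, a reduced Hochschild chain $(a_0,\dots,a_n)$ corresponds to the form on $LM$ obtained by evaluating $a_0$ at the basepoint and pulling back $a_1\otimes\cdots\otimes a_n$ along the ordered multi-evaluation into $M^n$. In a Lambrechts--Stanley Poincaré duality model $A$, the diagonal class in $A\otimes A$ is represented by $\D=\sum \D'\otimes \D''$, and inserting $\D$ at a splitting position distributes $\D'$ onto the left half and $\D''$ onto the right half. The resulting chain-level formula is exactly \eqref{equ:intro product}, and the sum over $j$ reflects the integration over the position of the splitting time along $[0,1]$.

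The main obstacle is the careful handling of the interaction between the basepoint $a_0$ and the split point, together with the cyclic symmetry needed for compatibility with $B$. The balancedness property $\sum a\D'\otimes \D'' = \sum \pm\, \D'\otimes a\D''$ is essential: it ensures that the chain-level coproduct descends to Hochschild homology, commutes appropriately with the Connes--Rinehart operator, and thus yields a genuine co-BV structure that matches the geometric one. Once this chain-level verification is executed with the requisite sign bookkeeping, the two coproducts agree and the theorem follows.
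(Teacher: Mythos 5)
Your plan is the right one in outline and matches the paper's strategy: starting from the Jones isomorphism, reduce the claim to a chain-level computation through the $\FM_M(2)$/$UTM$ resolution of the diagonal, use the Lambrechts--Stanley Poincaré duality model so that the Thom class is represented by $\Delta(1)\in A\otimes A$, and verify that the resulting coproduct formula is \eqref{equ:intro product}; the identification of $B$ with the $S^1$-action is indeed classical. The paper carries this out in Sections~\ref{sec:cochain zigzags}--\ref{sec:thm12proofs} by tracing a cocycle $(\alpha,x)\in B\otimes_{A^e}A$ through the cochain realization of the zigzag \eqref{equ:product_zigzag}: the map $\Map(8)\to LM$ is modeled by deconcatenation \eqref{equ:concatloop}, giving $(\alpha',x,\alpha'')$, and then Lemma~\ref{lem:hoprod} with $g=\Delta$ from \eqref{equ:sec6diag} produces $\sum(\alpha',x\D')\otimes(\alpha'',\D'')$.

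However, the geometric picture you give for the intermediate step is the wrong one: the phrases ``two-point evaluation $LM\to\FM_M(2)$ associated to any pair of cyclically ordered times'' and ``split the loop at a chosen interior time, weighted by the Poincaré dual of the diagonal'' describe the Goresky--Hingston \emph{coproduct} of Theorem~\ref{thm:main_2} (the operation on $H^\bullet(LM,M)$ coming from the splitting map $s:I\times LM\to\Map(\bigcirc_2)$), not the cohomology coproduct dual to the Chas--Sullivan product. For the dual of the string product there is no splitting at an interior time and no map $LM\to\FM_M(2)$: one pulls a form back along the \emph{concatenation} map $\Map(8)\to LM$ (this is what distributes the iterated-integral time parameters between the two ears and produces the sum over $j$), then cups with the Thom class pulled back along $\Map(8)\to M$ evaluating at the \emph{single} node, and uses the homotopy pushout of Lemma~\ref{lem:ucolim} to excise into $H^\bullet(LM\times LM,LM\times'LM)$. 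If you pursued your geometric picture literally you would be led to \eqref{equ:intro coproduct} rather than \eqref{equ:intro product}. The remaining ingredients (the balancing property of $\D$ and the Connes--Rinehart/rotation compatibility of Section~\ref{sec:Connesdiff}) are used as you say, but the verification needs to be actually executed on cochains, as the paper does, rather than asserted.
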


As a second application we consider the string coproduct.
On the Hochschild chains the corresponding product operation is given by the formula
\begin{equation}\label{equ:intro coproduct}
(a_0,a_1,\dots,a_m) \otimes (b_0,b_1,\dots,b_n)
\to 
\sum\pm (b_0\D' a_0,a_1,\dots,a_m,\D'',b_1,\dots , b_n).
\end{equation}
We then show the following result, conjectured in (some form in) \cite{Abbaspour, Klamt2}, cf. also \cite{ChenEshmatovGan}.
\begin{Thm}\label{thm:main_2}
For $M$ a closed simply connected manifold the map \eqref{equ:ii2} respects the coproducts.
\end{Thm}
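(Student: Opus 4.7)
The plan is to interpret the string coproduct geometrically via the compactified configuration space $\FM_M(2)$ and then match the resulting algebraic formula to \eqref{equ:intro coproduct}. On the topological side, we express the string coproduct through the correspondence
\[
LM \;\longleftarrow\; LM \times_{M^2}\partial\FM_M(2) \;\longrightarrow\; LM\times LM,
\]
where $\partial\FM_M(2)\cong UTM$ and the map to $M^2$ records the values of the loop at two chosen parameter points. Integration along the $UTM$-fibre produces the coproduct at the level of chains, and dualization yields the cup-like product on $H^{-\bullet}(LM,M)$ that we wish to match.

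Next, interpret an element $(a_0,\ldots,a_n)\in\overline{C}(A,A)$ under the iterated integral map \eqref{equ:ii2} as the form on $LM$ obtained by pulling back $a_0\otimes\cdots\otimes a_n$ along evaluation at $n+1$ cyclically ordered marked points and integrating over the positions of the $n$ non-basepoint markers. Propagating the pull-push above through this iterated integral reduces the essential computation to the pushforward along $UTM\hookrightarrow\FM_M(2)\to M\times M$. In the Poincar\'e duality model $A$ this pushforward is represented by cap product with the diagonal class $\D=\sum\D'\otimes\D''$; this is the defining property of $\D$ as a model for the Poincar\'e dual of the diagonal, and it is how the self-intersection constraint gets translated into an algebraic insertion of $\D$.

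Combining these ingredients, the product of the two forms associated to $(a_0,\ldots,a_m)$ and $(b_0,\ldots,b_n)$ is represented by an iterated integral over a single loop equipped with a prescribed self-intersection, which may be unfolded as a loop carrying markers in the cyclic order $(b_0,\D',a_0,a_1,\ldots,a_m,\D'',b_1,\ldots,b_n)$. Collapsing the coincident basepoint markers $b_0,\D',a_0$ yields exactly the Hochschild chain $(b_0\D'a_0,a_1,\ldots,a_m,\D'',b_1,\ldots,b_n)$ of \eqref{equ:intro coproduct}, with the sum over the decomposition $\D=\sum\D'\otimes\D''$ arising from expanding the diagonal class inside the pushforward.

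The main obstacle will be the careful treatment of boundary and corner contributions from $\FM_M(2)$: one must verify that collisions between the iterated-integral markers and the two self-intersection points do not produce spurious contributions, and that the construction is compatible with the reduction modulo constant loops, which is essential since the Goresky-Hingston coproduct is only defined on the relative theory. Orientation and sign bookkeeping, together with compatibility with the graph-complex models of $\FM_M(2)$ of \cite{Idrissi,CamposWillwacher}, will also require careful verification.
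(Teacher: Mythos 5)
Your high-level idea is the same as the paper's: realize the self-intersection via the boundary inclusion $UTM=\partial\FM_M(2)\hookrightarrow\FM_M(2)$, and algebraically replace the Thom/intersection step by the diagonal class $\D$ of the Poincar\'e duality model. In the paper this is exactly the content of the homotopy-commuting square of Lemma \ref{lem:hocop} specialized to the Lambrechts--Stanley model, where one can take $h=0$ and $g=\Delta$, leading to the diagram \eqref{equ:sec6diag}. So the key insight is correctly identified.

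However, as written there are genuine gaps that you have only flagged, not closed. First, the correspondence
\[
LM \longleftarrow LM\times_{M^2}\partial\FM_M(2)\longrightarrow LM\times LM
\]
is not well posed: $LM\to M^2$ is not defined without specifying the two parameter points, which in the actual construction vary over an interval via the splitting/reparametrization map $s:I\times LM\to \Map(\bigcirc_2)$ (see \eqref{equ:splittingmap_def}), and the entire construction passes through the two-pointed loop space $\Map(\bigcirc_2)=PM\times_{M\times M}PM$ rather than $LM$ itself. This is not just notational: the degree shift by $1$ (the coproduct has degree $n-1$, not $n$) comes precisely from the interval parameter, and your correspondence does not produce it. Second, the restriction to the relative theory $H^\bullet(LM,M)$ is not a bookkeeping afterthought but the reason the operation exists at all; the space $F=LM\coprod_M LM\subset\Map(\bigcirc_2)$ of figure eights with one constant ear must be carried along in the entire zigzag (cf.\ diagram \eqref{eqn:defredcop}), and it is only because this subspace is quotiented out that the splitting-map endpoints and the self-intersection near the basepoint do not produce ill-defined or spurious contributions. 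Your proposal defers both points (''marker collisions,'' ''compatibility with the reduction modulo constant loops'') to unresolved ''main obstacles,'' but in the paper's argument the entire point of working with algebraic cochain models (bar construction $B$, reduced Hochschild complex $\bar C(A)$, the functor $T_{F,E}$ of section \ref{sec:coprodrel}) is to make these issues tractable by replacing geometric boundary estimates with exact algebraic identities. A purely iterated-integral argument over $\FM_M(2)$ would also need to justify why the corner strata contribute nothing, which the paper sidesteps entirely by never differentiating under iterated integrals on the compactified configuration space.
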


The Lie bialgebra structure on the $S^1$-equivariant (co)homology of $LM$ can be constructed from the string product and coproduct. Hence it also follows that the map \eqref{equ:ii3} respects the Lie bialgebra structures in the simply connected situation. 

Note that so far all string topology operations considered depend on $M$ only through the real (or rational) homotopy type of $M$, as encoded in the dgca model $A$.
This is in accordance with the result of \cite{CamposWillwacher,Idrissi} that the real homotopy type of the configuration spaces of points on $M$ only depends on the real homotopy type of $M$, for simply connected $M$. (And our construction depends only on a model for the configuration spaces, with the boundary inclusion.)  

However, we can also use our approach to "compute" the string topology operations for non-simply connected manifolds. In this case the maps \eqref{equ:ii1}-\eqref{equ:ii3} are no longer quasi-isomorphisms and "compute" has to be understood 
as providing algebraic operations on the left-hand sides that are preserved by those maps.
In this setting, one notably sees some indications of dependence of the string topology operations on $M$ beyond its real or rational homotopy type.
In this situation, we use the dgca model for the configuration space of points on $M$ constructed in \cite{CamposWillwacher}.
In that construction a central role is played by a dg Lie algebra of graphs $\GC_M$, whose elements are series of connected graphs with vertices decorated by elements of $\bar H_{\bullet}(M)$.
\begin{equation}\label{equ:GCMex}
\begin{tikzpicture}[scale=.7,baseline=-.65ex]
\node[int,label=90:{$\alpha$}] (v1) at (90:1) {};
\node[int,label=180:{$\beta$}] (v2) at (180:1) {};
\node[int] (v3) at (270:1) {};
\node[int,label=0:{$\gamma$}] (v4) at (0:1) {};
\draw (v1) edge (v4) edge (v2) (v3) edge (v2) edge (v4) edge (v1);
\end{tikzpicture}
\end{equation}
The dgca model for the configuration space is then completely encoded by a Maurer-Cartan element $Z\in \GC_M$.
The tree (i.e., loop-order-0-)part of $\GC_M$ can be identified with (almost) the Lie algebra encoding the real homotopy automorphisms of $M$. Similarly, the tree part $Z_0$ of $Z$ just encodes the real homotopy type of $M$. The higher loop orders hence encode potential dependence on $M$ beyond its real homotopy type.

We can use these models for configuration spaces to describe the string topology operations on the images of the morphisms \eqref{equ:ii1}-\eqref{equ:ii3}, and get explicit formulas.
However, since the formula for the coproduct is a bit ugly (see section \ref{sec:graphical version}), we will restrict to the equivariant situation and describe the string bracket and cobracket there.
So we consider again the loop space $LM$, with the goal of studying the Lie bialgebra structure on its $S^1$-equivariant cohomology.
First note that in the non-simply connected case the maps \eqref{equ:ii1}-\eqref{equ:ii3} still exist, but they are generally not quasi-isomorphisms.
Nevertheless, we can ask for a Lie bialgebra structure to put on the left-hand side of \eqref{equ:ii3} that makes \eqref{equ:ii3} into a morphism of Lie bialgebras.
We may also replace the reduced cyclic words $\bCyc \bar A$ in our dgca model $A$ of $M$ by the reduced cyclic words $\bCyc \bar H$ in the cohomology $H:=H^\bullet(M)$ of $M$.
In this case we have a non-trivial $\Com_\infty$-structure on $H$, and accordingly a differential on $\bCyc \bar H$ encoding the $\Com_\infty$-structure.
Our result is then the following, partially conjectured in \cite[Conjecture 1.11]{CFL}.
\begin{Thm}\label{thm:liebialg}\label{thm:main_3}
Let $M$ be a closed connected oriented manifold.
Then there is a (degree $2-d$-)homotopy involutive Lie bialgebra structure on the chain complex $\bCyc \bar H$, explicitly constructed below using only the Maurer-Cartan element $Z\in \GC_M$ above, such that the induced map in (co)homology
\[
H_\bullet\bCyc(\bar H) \to \bar H^{-\bullet}_{S^1}(LM)
\]
respects the Lie bracket and Lie cobracket.
\end{Thm}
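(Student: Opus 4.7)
The strategy is to translate the topological definition of string bracket and cobracket, which intersect loops at their basepoint or at a self-intersection, into algebraic operations on $\bCyc \bar H$ using the Campos-Willwacher graphical model. Concretely, the Maurer-Cartan element $Z \in \GC_M$ encodes (a model of) the dgca of forms on $\FM_M(2)$, and in particular supplies a "propagator" together with decorations by $\bar H$; the pair of boundary faces of $\FM_M(2)$, namely $UTM$ and the diagonal-type strata appearing as one point approaches another, are what geometrically produce the bracket and cobracket. I would therefore set up operations on cyclic words $(a_1, \dots, a_n)$ by inserting marked points, contracting pairs of decorations against the propagator coming from $Z$, and summing over connecting graphs.

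More explicitly, the \emph{bracket} $[-,-]\colon \bCyc\bar H \otimes \bCyc\bar H \to \bCyc\bar H$ would be defined on two cyclic words by picking a cyclic representative of each, joining them at their basepoints by contracting against the propagator supplied by $Z$, and then resumming over all graphs attached to the resulting marked pair (the sum is over placements/decorations dictated by the tree and higher-loop parts of $Z$). The \emph{cobracket} $\delta\colon \bCyc\bar H \to \bCyc\bar H \otimes \bCyc\bar H$ is defined dually: choose two marked points on a single cyclic word, split it into two cyclic words, and glue in the propagator contributions from $Z$. The differential on $\bCyc \bar H$ is the sum of the Connes-Rinehart type differential induced by the $\Com_\infty$-structure on $H$ (which itself is encoded by the tree part of $Z$) and the internal differential coming from the $\GC_M$-decorations. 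In the simply connected case the higher-loop contributions from $Z$ vanish in the relevant range, and one recovers a formula of the shape \eqref{equ:intro coproduct} and its dual, matching \eqref{equ:intro product}.

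The verification that these operations form a homotopy involutive Lie bialgebra of degree $2-d$ would follow formally from the Maurer-Cartan equation $dZ + \tfrac12[Z,Z]=0$, together with the combinatorics of the boundary strata of $\FM_M(2)$: the quadratic Maurer-Cartan term controls the failure of the bracket/cobracket to be strictly (co-)Jacobi and the Drinfeld compatibility, producing precisely the homotopies required to satisfy the $\IBL_\infty$-axioms. This is in the same spirit as the proof that Kontsevich's graph complex acts by $L_\infty$-derivations on various cyclic chain complexes, and should be a largely formal, albeit lengthy, sign-and-combinatorics computation once the operations are defined as sums over graphs.

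The main obstacle is the last step: showing that the iterated integral map $H_\bullet \bCyc(\bar H) \to \bar H^{-\bullet}_{S^1}(LM)$ intertwines these operations with the honest Chas-Sullivan bracket and the Turaev/Goresky-Hingston cobracket. This requires matching the algebraic "insert the propagator" operation on cyclic words with the topological "transversely intersect loops" operation, and here the key fact is that the forms used to build $Z$ integrate against chains on $\FM_M(2)$ in such a way that the boundary inclusion $\partial \FM_M(2) = UTM \hookrightarrow \FM_M(2)$ represents the intersection class. I would argue this by the same strategy already exploited in the simply connected case in Theorem \ref{thm:main_2}: push the iterated integral construction through the evaluation-at-two-marked-points map $LM \to M \times M$, identify the pullback of the diagonal/boundary class in the Campos-Willwacher model, and then Stokes' theorem applied on $\FM_M(2)$ turns the algebraic graph sum into the topological self-intersection. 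Passing to $S^1$-equivariant cohomology is then formal and yields the compatibility of bracket and cobracket.
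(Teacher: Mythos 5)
Your overall outline (build the $\IBL_\infty$-structure by twisting with data extracted from $Z$, deduce the $\IBL_\infty$-axioms from the Maurer--Cartan equation, then compare with loop-space cohomology via explicit cochain models for $\FM_M(2)$) is pointed in the right direction, but two steps are materially off. First, the bracket you describe as ``resumming over all graphs attached to the resulting marked pair, dictated by the tree and higher-loop parts of $Z$'' is incorrect: in the paper's construction the Lie bracket on $\bCyc\bar H$ is the \emph{untwisted} bracket on cyclic words (contracting a decoration in one word against one in the other via Poincar\'e duality), and the twist by $Z$ does not touch it at all --- only the differential, the cobracket, and the higher $(r,1)$-operations are modified. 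The mechanism by which $Z$ enters is also more specific than ``formally from the MC equation and combinatorics of boundary strata'': one must first compose the map $\GC'_H \to \GC_H^{\op{As}}$ (summing over cyclic orders at vertices) with the ribbon-graph-to-cyclic-words map $f\colon \GC_H^{\op{As}} \to S(\bCyc(H^*)[n-3])[[\hbar]]$ that reads off boundary cycles and records genus in $\hbar$, yielding a Maurer--Cartan element $Z_{cyc}$ in the BV-type Lie algebra $S(\bCyc(H^*)[n-3])[[\hbar]]$, and only then twist $\Delta_0$ by $Z_{cyc}$. Without this intermediate step the MC equation for $Z$ in $\GC_M$ does not directly produce the required master equation in the symmetric algebra.

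The bigger gap is the comparison with $\bar H_{S^1}^{-\bullet}(LM)$, which you dispose of as ``Stokes' theorem on $\FM_M(2)$ turns the algebraic graph sum into the topological self-intersection.'' That does not account for what actually has to be done: since $M$ is not assumed simply connected, one cannot use the Lambrechts--Stanley model, and the paper instead builds explicit graph-complex models $\mathcal{C}$ and $\mathcal{U}$ for $\FM_M(2)$ and $UTM$, shows via Lemma \ref{lem:contrsquare} and Proposition \ref{prop:cube} that the square $A\otimes A \to A$, $\mathcal{C}\to\mathcal{U}$ is a model for the boundary inclusion, and then constructs --- by the homological perturbation lemma on a colored decomposition of the complex --- the explicit contracting homotopy $H$ and its components $H_{A\otimes A}\circ(\wedge Th)$, $H_A\circ(\wedge Th)$ appearing in Lemmas \ref{lem:hocop}/\ref{lem:frcop}. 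It is precisely this computation that produces the $z_1$-term in the cobracket formula (Theorems \ref{thm:cobracket framed}, \ref{thm:cobracket reduced}), and the identification with the output of the twisted BV operator is a non-trivial matching of formulas, not an immediate consequence of Stokes' theorem. You correctly identify this as the main obstacle, but the argument you sketch would not produce the homotopies $g,h$ of Lemma \ref{lem:hocop} nor explain where the one-loop part $Z_1$ enters the cobracket.
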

We remark that for this theorem the Lie bracket and cobracket on $\bar H^{-\bullet}_{S^1}(LM)$ are defined using the string product and coproduct, roughly following \cite{GoreskyHingston, HingstonWahl}, see section \ref{sec:stringbracketdef} below. For the explicit formulas for the bracket and cobracket see section \ref{sec:thm3proof}, in particular Theorems \ref{thm:bracket} and \ref{thm:cobracket reduced}.

We also note that in dimensions $\neq 3$ the Maurer-Cartan element $Z$ can be taken without terms of loop orders $>1$, i.e., $Z=Z_0+Z_1$, with $Z_1$ of loop order 1.
In this case the homotopy involutive Lie bialgebra structure in the above theorem is in fact a strict Lie bialgebra structure.
Furthermore, the induced involutive Lie bialgebra structure on $H\bCyc(\bar H)$ depends only on the loop order 0 and 1 parts of $Z$ in all dimensions.
Finally, $Z_1$ is not easy to compute, and the authors do not have an example of a concrete manifold for which $Z_1$ is computable and known to be nontrivial. 

However, we expect the loop order 1 part of $\GC_M$ to correspond to nontrivial terms in (a Lie algebra model of) $\Diff(M)$ arising from topological Hochschild homology. 
We show that in families the string cobracket witnesses a dependence on $M$ beyond its real homotopy type.
In particular, we show that in the simply-connected case the following diagram commutes
$$
\begin{tikzcd}[column sep=tiny]
\pi_*(\Diff_1(M)) \ar[rr] \ar[d] && \Der_{[\cdot,\cdot], \delta}(\bar{H}^{S^1}_\bullet(LM)) \ar[d] \\
\pi_*(aut_1(M)) \ar[rr] \ar[dr] && \Der_{[\cdot,\cdot]}(\bar{H}^{S^1}_\bullet(LM)) \\
& \bar{H}^{S^1}_\bullet(LM) \ar[ur, "\op{ad}"'],
\end{tikzcd}
$$
and that in examples, the right vertical arrow is far from being surjective. Hence, in contrast to the string bracket, the cobracket gives a non-trivial condition (in general) on elements in $\pi_*(aut_1(M))$ to be in the image of $\pi_*(\Diff_1(M))$. In that sense we obtain that the string coproduct is not homotopy invariant.
For a more detailed discussion we refer to the concluding remarks in section \ref{sec:discussion}.

Let us also consider the case of $M$ being $1$-framed, that is, equipped with a nowhere vanishing vector field.
A necessary condition for this to exist is, of course, the vanishing of the Euler characteristic $\chi(M)=0$.
In this setting one may construct the string coproduct already on the cohomology of the loop space $H(LM)$, as opposed to on $H(LM,M)$ for general $M$, see section \ref{sec:framedcoprod} below.
By similar methods as above we then obtain:
\begin{Thm}\label{thm:main_4}
Let $M$ be a closed orientable $1$-framed manifold.
If $M$ is simply connected (and we hence have a Poincaré duality model) the map \eqref{equ:ii1} is compatible with the string coproduct (cohomology product), where the cohomology product on the left-hand side is defined by the same formula (which now makes sense on absolute chains).

For $M$ (potentially) non-simply connected the map \eqref{equ:ii3} intertwines the string bracket and cobracket on the right-hand side with the corresponding operations on the left-hand side, given by the same formulas as in Theorem \ref{thm:main_3}, cf. section \ref{sec:thm3proof} below.
\end{Thm}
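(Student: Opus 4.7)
The plan is to adapt the configuration-space arguments used for Theorems \ref{thm:main_2} and \ref{thm:main_3} to the framed setting. The essential new input is that a nowhere vanishing vector field $v$ on $M$ supplies a canonical section $\sigma_v \colon M \to UTM = \partial \FM_M(2)$ of the unit tangent bundle. Geometrically, the obstruction to defining the string coproduct on absolute chains $H^\bullet(LM)$ rather than on relative ones $H^\bullet(LM, M)$ stems from the singularity at constant loops: the two marked points must collide along some tangent direction, and without a preferred direction the self-intersection operation fails to extend. The framing $v$ supplies such a direction, and in the perturbative model it replaces the fiber integration along $\partial \FM_M(2) \hookrightarrow \FM_M(2)$ by evaluation at $\sigma_v$.

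For part (a), I would first verify that the algebraic formula \eqref{equ:intro coproduct} extends from the reduced Hochschild complex $\bar C(A)$ to the full complex $C(A)$ as a chain map, when $A$ is a Poincar\'e duality model for the simply connected $M$. The additional terms arising when $a_0$ or $b_0$ contain a unit component correspond, under the iterated integral, to the pushforward along $\sigma_v$ of the tangential data carried by the framing. I would then compare the geometric framed coproduct with the algebraic formula following the strategy of the proof of Theorem \ref{thm:main_2}: rewrite the coproduct as a pullback-pushforward through $\FM_M(2)$ and match propagator integrals edge by edge, with the fiber integration over $\partial \FM_M(2)$ replaced by evaluation at the section $\sigma_v$.

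For part (b), the argument parallels the proof of Theorem \ref{thm:main_3}. Using the graph model of \cite{CamposWillwacher} and the Maurer-Cartan element $Z \in \GC_M$, the same graph formulas as in Theorems \ref{thm:bracket} and \ref{thm:cobracket reduced} define a Lie bracket and cobracket on the relevant cyclic complex. The intertwining of \eqref{equ:ii3} with these operations then follows, mutatis mutandis, from the perturbative argument used for Theorem \ref{thm:main_3}, once one has checked that the framing-induced modifications to the boundary data at $\partial \FM_M(2)$ are absorbed by the same algebraic formulas, without producing new graph contributions.

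The main obstacle will be to track carefully the boundary contributions from $\partial \FM_M(2) = UTM$. In the unframed case these are precisely what force the coproduct onto relative chains: integrating out the fiber $S^{n-1}$ produces terms proportional to the Euler class $e(TM)$. The vanishing $\chi(M)=0$ makes $e(TM)$ exact, and a specific choice of framing $v$ yields an explicit primitive; one must verify that this primitive can be built into the propagator and the Maurer-Cartan element $Z$ in such a way that the same algebraic formulas from the reduced/relative setting describe the framed operations on the absolute complex. A secondary difficulty is to confirm the compatibility of the whole construction with the $S^1$-equivariant machinery and the passage to cyclic chains in part (b).
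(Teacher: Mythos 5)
Your proposal captures the central geometric idea of the paper — the nowhere vanishing vector field trivializes the Euler class, so that the Thom class lifts to a closed cochain on $UTM$ (equivalently, the section $\sigma_v$ defines a Thom collapse), and this is exactly what makes the coproduct land in absolute rather than relative cohomology. You also correctly identify that both parts then proceed by adapting the proofs of Theorems~\ref{thm:main_2} and~\ref{thm:main_3}, and that the final algebraic formulas do not change. In that sense the route is essentially the paper's.

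However, several details are imprecise or differ from what the paper actually does. For the simply-connected part, you propose to "match propagator integrals edge by edge," but the paper's argument at the end of Section~\ref{sec:thm12proofs} never touches a propagator: it works entirely with the Poincar\'e duality model $A$ via the Lambrechts--Stanley cones and the abstract homotopy-commuting-square mechanism of Lemma~\ref{lem:frcop} (with the diagonal $\Delta$ playing the role of $\Psi$ and a strictly commuting square). Propagator computations are deferred to the non-simply-connected graph-complex argument. Second, the phrase "fiber integration over $\partial\FM_M(2)$ replaced by evaluation at $\sigma_v$" is suggestive but not what the machinery does: in the general case there is no fiber integration being performed, the Thom class is simply a relative class in $C^n(M,UTM)$; the framing promotes it to a \emph{closed} element of $C^{n-1}(UTM)\subset C^n(M,UTM)$, and the change is in the target of the wedge-with-Thom-class map (absolute vs.\ relative cohomology of $\Map(8)$), not in replacing an integration by an evaluation. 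Third, your claim that the framing modifications come "without producing new graph contributions" is misleading: in the graph-complex derivation (Section~\ref{sec:graphical version}) a genuinely new graph, the tadpole $T$ of \eqref{equ:TYdef}, enters through the fiberwise volume form $\nu=T+Y$ representing the lifted Thom class, and the entire contracting-homotopy computation is driven by $\wedge T$. It is only the \emph{final} formula (Theorems~\ref{thm:cobracket framed} vs.~\ref{thm:cobracket reduced}) that coincides, because the homotopy inverse to the $8\to\theta$-graph identification is a strict right inverse in both cases. Finally, your step "verify that the algebraic formula extends as a chain map on $C(A)$" deserves to be spelled out: this is precisely where $\chi(M)=0$ is used (the obstruction $m\circ\D=\chi(M)\omega$ vanishes), and the paper folds this into the geometric statement about the closed Thom form rather than a separate algebraic verification. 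None of these are fatal, but the proposal as written would need to replace the propagator-matching plan in part (a) by the PD-model/Lemma~\ref{lem:frcop} argument, and in part (b) engage with the tadpole term $T$ in $\nu$.
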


We finally note that for the string topology operations considered here, only the configuration spaces of up to two points play a role, and in the graph complex only diagrams of loop order $\leq 1$. In light of Theorem \ref{thm:liebialg} it is hence reasonable to expect that a similar discussion of higher order string topology operations involves configuration spaces of more points, and diagrams of higher loop order.

\subsection*{Acknowledgements}
We are grateful for discussions, suggestions and support by Anton Alekseev, Ricardo Campos, Matteo Felder, Alexander Kupers, Pavel Safronov, Nathalie Wahl, and others. 
While working on this project we were made aware of similar results in preparation by Kaj B\"orjeson \cite{Borjeson}. We are grateful for him sharing his drafts.

The first author is partially supported by the Postdoc Mobility grant P400P2\_183900 of the Swiss National Science Foundation.
The second author is partially supported by the European Research Council under the ERC starting grant StG 678156 GRAPHCPX.

\section{Notation and recollections}

\subsection{Conventions on (co)chain complexes}
We generally work with cohomological degree conventions, that is, differentials in differential graded (dg) vector spaces have degree +1.
If we want to emphasize the cohomological nature of a dg vector space, we sometimes write it as $V^\bullet$, while $V_\bullet$ shall refer to homological conventions. Note that we often omit the $(-)^\bullet$, for example $H(M)=H^\bullet(M)$ is the cohomology of the manifold $M$.
Furthermore, all dg vector spaces will be over a field $\K$ of characteristic zero. For a (big) part of the results we need to restrict to either $\K=\R$ or $\K=\Q$. 
For $V$ a dg vector space we denote by $V[k]$ the $k$-fold degree shifted dg vector space, defined such that for an element $v\in V$ of degree $j$, the corresponding element of $V[k]$ has degree $j-k$. Such degree shifts can also be indicated on the degree placeholder $\bullet$ like $V^{\bullet +1}:=V^\bullet[1]$.

We define the tensor coalgebra of a cohomologically graded complex $V^\bullet$ with a (non-standard) degree shift
\[
TV^\bullet := \bigoplus_{k\geq 0} (V^{\bullet}[1])^{\otimes k}.
\]
This is to remove clutter when working with the Hochschild complex $TA\otimes A$ or similarly defined complexes later.

\subsection{Operads, $\Com_\infty$- and $\hCom_\infty$-algebras}
\label{sec:intro operads}
We denote by $\Com$ the commutative operad.
It has the standard Koszul resolution 
\[
\Com_\infty = \Omega(\Com^\vee)\to \Com
\]
as the cobar construction of the Koszul dual cooperad $\Com^\vee=\Lie^*\{1\}$, with $\Lie$ the Lie operad.
A $\Com_\infty$-algebra structure on a (differential) graded vector space $A$ can be encoded as a codifferential $D_A$ on the cofree $\Com^\vee$-coalgebra
\[
A = \bF^c_{\Com^\vee}(A[1]).
\]
A $\Com_\infty$-map $A\to B$ is by (our) definition a map of $\Com^\vee$-coalgebras
\[
\left( \bF^c_{\Com^\vee}(A[1]),D_A\right) \to \left( \bF^c_{\Com^\vee}(B[1]),D_B\right).
\]
One can strictify a $\Com_\infty$-algebra $A$ to a free $\Com$-algebra quasi-isomorphic to $A$
\[
\hat A := (\bF_{\Com}(\bF^c_{\Com^\vee}(A[1])[-1]), D).
\]
Concretely, the $\Com_\infty$-quasi-isomorphism $A\to \hat A$ is given by the inclusion
\[
\bF^c_{\Com^\vee}(A[1]) 
\subset
\bF_{\Com}(\bF^c_{\Com^\vee}(A[1])[-1])[1]
\subset 
\bF^c_{\Com^\vee}(\bF_{\Com}(\bF^c_{\Com^\vee}(A[1])[-1])[1]).
\]

We shall also use below the bar-cobar resolution 
\[
\hCom_\infty :=\Omega(B(\Com)) \xrightarrow{\simeq}\Com
\]
where $B(-)$ stands for the operadic bar construction.
In the same manner as above one defines $\hCom_\infty$-algebras and $\hCom_\infty$-morphisms.
We just replace $\Com^\vee$ by $B\Com$, or in other words $\Lie$ by $\Lie_\infty$.

There is a canonical quasi-isomorphism of cooperads $\Com^\vee\to B\Com$ and accordingly a canonical quasi-isomorphism
\[
\Com_\infty \to \hCom_\infty.
\]
Hence any $\hCom_\infty$-algebra is in particular a $\Com_\infty$-algebra, and a $\hCom_\infty$-map between two $\hCom_\infty$-algebras induces a $\Com_\infty$-map between the corresponding $\Com_\infty$-algebras.

\subsection{Hochschild and cyclic complex}\label{sec:intro hochschild}
For an associative or more generally $A_\infty$-algebra $A$ we consider the reduced Hochschild complex $\bar C(A)$ of $A$. All of our algebras will be augmented, and in this case we can write
\[
\bar C(A) = \left( \bigoplus_{k\geq 0} (\bar A[1])^{\otimes k} \otimes A, d_H\right)
\]
where $d_H$ is the Hochschild differential and $\bar A$ is the augmentation ideal.

The negative cyclic complex of $A$ is 
\[
(\bar C(A)[[u]], d_H+uB),
\]
where $u$ is a formal variable of degree $+2$ and $B$ is the Connes-Rinehart differential, see \eqref{equ:connesB}.

We similarly define the reduced negative cyclic complex to be cyclic homology relative to $\R \to A$
\[
\bCC(A) = \left(\bigoplus_{k\geq 0}  (\bar A[1])^{\otimes k} \otimes A[[u]], d_H+uB\right)/ \R[[u]],
\]
since our unit is split, this will differ by $\R[[u]]$ from the original one.
Let 
\[
\bCyc(\bar A) = \bigoplus_{k\geq 1} \left(  (\bar A[1])^{\otimes k} \right)_{S_k}
\]
be the reduced cyclic words.
Then there is a natural map of complexes, and a quasi-isomorphism as we will see in Proposition \ref{prop:cycqiso} below,
\[
\bCyc(\bar A) \to \bCC(A),
\]
essentially by applying the operator $B$. More concretely
\[
(a_1,\cdots ,a_k)
\mapsto 
\sum_{j=1}^k \pm (a_j,\dots,a_{j-1},1).
\]

Now consider an $A_\infty$-map $f: A\to B$ between (unital) $A_\infty$-algebras.
The constructions above are functorial in $A$, and one natural maps between the Hochschild and cyclic complexes of $A$ and $B$, induced by $f$.
Furthermore, any $\Com_\infty$-algebra is an $A_\infty$-algebra, and a $\Com_\infty$-map is an $A_\infty$ map, so the same applies to $\Com_\infty$-maps $f:A\to B$.
By the previous subsection, we may also replace (a fortiori) $\Com_\infty$ by $\hCom_\infty$. 

When dealing with words $\alpha=(a_1,\dots,a_p),\beta=(b_1,\dots,b_q)\in TX$ in the tensor algebra of a vector space $X$, we denote by 
\[
\alpha\beta=(a_1,\dots,a_p,b_1,\dots,b_q)\in TX
\]
their concatenation.
Similarly, we denote by $\Sha$ the shuffle product
\[
\alpha\Sha\beta
=\sum_{\sigma \in \mathit{Sh}(p,q)} \sigma\cdot (\alpha\beta),
\]
where the sum runs over all $(p,q)$-shuffle permutations.
For example, the reduced Hochschild complex of a commutative algebra has a commutative product given by the formula
\[
(\alpha,\alpha_0)\Sha (\beta,\beta_0)
=
\pm (\alpha\Sha \beta, \alpha_0\beta_0).
\]
Note that here $\alpha_0\beta_0$ is the product of the two elements in $A$, not the juxtaposition.

\subsection{Pullback-pushout lemma}
We will later make use of the following result, see for example \cite[Theorem 2.4]{HessNotes} or \cite[Proposition 15.8]{FHT2}.

\begin{Thm}
Consider the pullback diagram
\[
\begin{tikzcd}
E\times_B X \ar{r} \ar{d}&  E\ar{d}{p} \\
X\ar{r} & B 
\end{tikzcd}
\]
where $p$ is a Serre fibration, $E$ is path connected and $X$ and $B$ are simply connected. Let $A_X\leftarrow A_B\rightarrow A_E$ be a rational dgca model for the lower right zigzag in the diagram.
Then the homotopy pushout
\[
A_X \otimes^h_{A_B} A_E 
\]
is a dgca model for the pullback $E\times_B X$.
\end{Thm}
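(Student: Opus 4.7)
The plan is to reduce the homotopy pushout to a strict algebraic pushout via cofibrant replacement, and then to identify its cohomology with that of the topological pullback via the Eilenberg-Moore spectral sequence.

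First I would factor the dgca map $A_B\to A_E$ as a cofibration followed by an acyclic fibration in the standard model structure on commutative differential graded algebras. Concretely, this produces a relative Sullivan extension $A_B \hookrightarrow A_B\otimes \Lambda V \xrightarrow{\simeq} A_E$, with $\Lambda V$ the free graded commutative algebra on a well-ordered generating space $V$, and a perturbed differential on the extension encoding the action of $A_B$ on $A_E$. Since such cofibrations compute the homotopy pushout on the nose after base change, there is a chain of quasi-isomorphisms
\[
A_X\otimes^h_{A_B} A_E \;\simeq\; A_X\otimes_{A_B}(A_B\otimes \Lambda V) \;=\; A_X\otimes \Lambda V,
\]
where the differential on the right is twisted by the dgca map $A_B\to A_X$.

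Second, I would use the standard fact from rational homotopy theory that a relative Sullivan extension $A_B\otimes \Lambda V$ provides a Sullivan model for the total space $E$ of the Serre fibration $p$, and that base change along $A_B\to A_X$ produces the natural candidate model for the pullback fibration $E\times_B X \to X$. This identifies the underlying graded algebra and differential, reducing the theorem to a cohomological verification.

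The main technical step, and the principal obstacle, is to show that the cohomology of $A_X\otimes \Lambda V$ really coincides with $H^\bullet(E\times_B X;\K)$. I would do this by comparing two convergent spectral sequences. On the algebraic side, the filtration of $A_X\otimes \Lambda V$ by wordlength in $V$ (or equivalently by powers of the augmentation ideal coming from the $\Lambda V$ factor) gives a spectral sequence whose $E_2$-page computes $\operatorname{Tor}_{H^\bullet(B)}\bigl(H^\bullet(E),H^\bullet(X)\bigr)$, because the bigraded Sullivan model identifies $\Lambda V$ with a free resolution of $H^\bullet(E)$ over $H^\bullet(B)$. On the topological side, the Eilenberg-Moore spectral sequence for the pullback along the Serre fibration $p$ has the same $E_2$-page and converges to $H^\bullet(E\times_B X;\K)$; here the simple connectedness of $X$ and $B$ is used crucially to guarantee triviality of the relevant local systems and strong convergence. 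A natural comparison map between the two filtered complexes induces a map of spectral sequences which is an isomorphism on $E_2$, hence on $E_\infty$, and therefore on the abutments. This yields the required quasi-isomorphism at the level of dgcas, completing the proof.
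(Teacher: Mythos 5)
First, note that the paper does not supply its own proof of this statement: it is a standard pullback--pushout lemma and is cited directly to \cite[Theorem 2.4]{HessNotes} and \cite[Proposition 15.8]{FHT2}. So the comparison is really against the classical proof, which follows the same overall shape you sketch (relative Sullivan factorization, base change, and a spectral sequence comparison), except that the classical argument compares against the \emph{Serre} spectral sequence of the pulled-back fibration, not the Eilenberg--Moore spectral sequence.

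Your first two steps are fine: factoring $A_B\to A_E$ through a relative Sullivan extension $A_B\hookrightarrow A_B\otimes\Lambda V\xrightarrow{\simeq} A_E$ and taking the strict base change $A_X\otimes_{A_B}(A_B\otimes\Lambda V)=A_X\otimes\Lambda V$ correctly computes the homotopy pushout, since the left map is a cofibration of dgcas. The gap is in the spectral sequence step. You claim that the filtration of $A_X\otimes\Lambda V$ by wordlength in $V$ (equivalently, by powers of the augmentation ideal of $\Lambda V$) has $E_2$-page $\operatorname{Tor}_{H^\bullet(B)}(H^\bullet(E),H^\bullet(X))$, on the grounds that ``$\Lambda V$ is a free resolution of $H^\bullet(E)$ over $H^\bullet(B)$.'' That identification is not correct: a relative Sullivan extension $A_B\otimes\Lambda V$ is a cofibrant replacement of $A_E$ in the category of $A_B$-\emph{algebras}; it is not a free (or even flat in the relevant sense) resolution of $H^\bullet(E)$ as an $H^\bullet(B)$-\emph{module}, which is what would be needed to obtain $\operatorname{Tor}$ as the $E_2$-page. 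The wordlength filtration on $A_X\otimes\Lambda V$ actually produces a Serre-type spectral sequence whose $E_2$-page is (roughly) $H^\bullet(X)\otimes H^\bullet(F)$, not $\operatorname{Tor}$. So as written, the algebraic side of your comparison does not match the Eilenberg--Moore spectral sequence you invoke on the topological side, and the argument does not close.

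There are two standard ways to repair this. (i) Keep the wordlength filtration on $A_X\otimes\Lambda V$ and compare it to the \emph{Serre} spectral sequence of the fibration $E\times_B X\to X$: both have $E_2\cong H^\bullet(X)\otimes H^\bullet(F)$ once simple connectivity of $X$ trivializes the local system, and the comparison map of filtered complexes is induced by a choice of lift $A_B\otimes\Lambda V\to\Omega^\bullet(E)$ over $A_B\to\Omega^\bullet(B)$ together with base change. This is essentially the argument in \cite{FHT2}. (ii) If you prefer the Eilenberg--Moore route, replace $A_X\otimes\Lambda V$ by the two-sided bar construction $B(A_X,A_B,A_E)$, which also computes the homotopy pushout; the bar filtration on that complex does have $E_2=\operatorname{Tor}_{H^\bullet(B)}(H^\bullet(X),H^\bullet(E))$, and can then be compared to the topological Eilenberg--Moore spectral sequence with the same $E_2$, converging to $H^\bullet(E\times_B X)$ by simple connectivity of $B$. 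Either route works, but in your writeup the filtration and the claimed $E_2$-page do not correspond to each other. You should also say a word about the construction of the comparison map of filtered complexes; it is standard but not automatic.
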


\subsection{Fulton-MacPherson-Axelrod-Singer compactification of configuration spaces}\label{sec:FM}
Consider an oriented manifold $M$. Axelrod and Singer \cite{AxelrodSinger} defined compactifications of the configuration spaces of points on $M$, by iterated real bordification. We denote the thus created compactified configuration space of $r$ points by $\FM_M(r)$.
We shall not recall the details of the compactification procedure here. We just note that a point in $\FM_M(r)$ can be seen as a decorated tree with $r$ leaves, with the root node decorated by a configuration of points in $M$, and the other nodes by (essentially) configurations of points in tangent spaces of $M$.
\[
  \begin{tikzpicture}[baseline=1cm, scale=.8]
    \draw (0,0)
    .. controls (-.7,0) and (-1.3,-.7) .. (-2,-.7)
    .. controls (-4,-.7) and (-4,1.7) .. (-2,1.7)
    .. controls (-1.3,1.7) and (-.7,1) .. (0,1)
    .. controls  (.7,1) and (1.3,1.7) .. (2,1.7)
    .. controls (4,1.7) and (4,-.7) .. (2,-.7)
    .. controls (1.3,-.7) and (.7,0)  .. (0,0);
    \begin{scope}[xshift=-2cm, yshift=.6cm, scale=1.2]
      \draw (-.5,0) .. controls (-.2,-.2) and (.2,-.2) .. (.5,0);
      \begin{scope}[yshift=-.07cm]
        \draw (-.35,0) .. controls (-.1,.1) and (.1,.1) .. (.35,0);
      \end{scope}
    \end{scope}
    \begin{scope}[xscale=-1, xshift=-2cm, yshift=.6cm, scale=1.2]
      \draw (-.5,0) .. controls (-.2,-.2) and (.2,-.2) .. (.5,0);
      \begin{scope}[yshift=-.07cm]
        \draw (-.35,0) .. controls (-.1,.1) and (.1,.1) .. (.35,0);
      \end{scope}
    \end{scope}
    \node [int, label={$\scriptstyle 1$}] (v1) at (-3,.5) {};
    \node [int, label={$\scriptstyle 2$}] (v2) at (2,-.1) {};
    \node [int] (va) at (0,.5) {};
    \begin{scope}[scale = .7, xshift=-1cm,yshift=2.75cm]
      \draw (va) -- (0,0) (va) -- (3,0) (va)-- (4,2) (va) -- (1,2);
      \draw[fill=white] (0,0)--(3,0)--(4,2)--(1,2)--cycle;
      \node [int, label={$\scriptstyle 3$}] (v3) at (1,1) {};
      \node [int] (vb) at (2,.5) {};
      \begin{scope}[scale = .7, xshift=2cm,yshift=2cm]
        \draw (vb) -- (0,0) (vb) -- (3,0) (vb)-- (4,2) (vb) -- (1,2);
        \draw[fill=white] (0,0)--(3,0)--(4,2)--(1,2)--cycle;
        \node [int, label={$\scriptstyle 4$}] (v4) at (1,1) {};
        \node [int, label={$\scriptstyle 5$}] (v5) at (2,.5) {};
      \end{scope}
    \end{scope}
  \end{tikzpicture}
\]

Similarly, one defines a version of the little disks operad $\FM_n$ assembled from the compactified configuration spaces of $r$ points in $\R^n$.
From this one may finally build a fiberwise version of the little disks operad 
\[
\FM_n^M = \mathit{Fr}_M \times_{SO(n)} \FM_n,
\]
where $\mathit{Fr}_M$ is the oriented orthonormal frame bundle for some (irrelevant) choice of metric on $M$.
The collection $\FM_n^M$ can be seen either as an operad in spaces over $M$, or as a colored operad with colors $M$.
The collection of spaces $\FM_M(r)$ then assembles into an operadic right module over $\FM_n^M$.

For our purposes it shall suffice to understand the situation in arities $r\leq 2$.
We have 
\begin{align*}
\FM_M(1)&=M = \FM_n^M(1) &\text{and} & & \FM_n^M(2) &= UTM,
\end{align*}
where $UTM$ is the unit tangent bundle of $M$.
The simplest instance of the operadic right action (and in fact the only instance we need) is the composition
\[
\FM_M(1) \times_M \FM_n^M(2) = UTM \to \FM_M(2), 
\]
which is just the inclusion $UTM\cong\partial\FM_M(2) \to \FM_M(2)$.

\subsection{The Lambrechts-Stanley model of configuration space}
We shall need cochain and dg commutative algebra models for configurations spaces of (up to 2) points. The simpler version of these are models proposed by Lambrechts and Stanley.
Concretely, for a simply connected closed manifold one can find a Poincar\'e duality model $A$, see \cite{LambrechtsStanley}. This is a dg commutative algebra quasi-isomorphic to the differential forms $\Omega(M)$, exhibiting Poincar\'e duality on the cochain level. In particular, we have a coproduct $\Delta : A\to A\otimes A$ of degree $n$.
The Lambrechts-Stanley model \cite{LambrechtsStanley2} is a (tentative) dg commutative algebra model for $\FM_M(r)$, which can be built out of $A$.
In particular for $r=2$ this is just
\[
\mathit{cone}(A\xrightarrow{\Delta} A\otimes A). 
\]
It has been shown in \cite{LambrechtsStanley3} that for $r\leq 2$ and 2-connected closed $M$ the proposed Lambrechts-Stanley model is indeed a model, i.e., quasi-isomorphic to $\Omega(\FM_M(2))$.
This has been extended by \cite{Idrissi, CamposWillwacher} to arbitrary $r$ and simply connected closed manifolds, provided $n\geq 4$.

\subsection{Graph complex models for configuration spaces}\label{sec:GraphsM}
For non-simply connected manifolds we cannot guarantee the existence of Poincar\'e duality models.
However, by work of Campos-Willwacher \cite{CamposWillwacher} one can still write down explicit, albeit more complicated models $\Graphs_M(r)$ of configuration spaces, for $M$ connected closed and oriented.
We shall only sketch the construction.
The dg vector space $\Graphs_M(r)$ consists (essentially) of linear combinations of (isomorphism classes of) diagrams with $r$ ``external'' vertices labelled $1,\dots,r$, and an arbitrary (finite) number of internal vertices. In addition, all vertices may be decorated by zero or more elements of the reduced cohomology $\bar H(M)=H^{\geq 1}(M)$. Finally, each connected component of a graph must contain at least one external vertex.
\[
  \begin{tikzpicture}[scale=1.2]
    \node[ext] (v1) at (0,0) {$\scriptstyle 1$};
    \node[ext] (v2) at (.5,0) {$\scriptstyle 2$};
    \node[ext] (v3) at (1,0) {$\scriptstyle 3$};
    \node[ext] (v4) at (1.5,0) {$\scriptstyle 4$};
    \node[int] (w1) at (.25,.5) {};
    \node[int] (w2) at (1.5,.5) {};
    \node[int] (w3) at (1,.5) {};
    \node (i1) at (1.7,1) {$\scriptstyle \omega_1$};
    \node (i2) at (1.3,1) {$\scriptstyle \omega_1$};
    \node (i3) at (-.4,.5) {$\scriptstyle \omega_2$};
    \node (i4) at (1.9,.4) {$\scriptstyle \omega_3$};
    \node (i5) at (0.25,.9) {$\scriptstyle \omega_4$};
    \draw (v1) edge (v2) edge (w1) (w1)  edge (v2) (v3) edge (w3) (v4) edge (w3) edge (w2) (w2) edge (w3);
    \draw[dotted] (v1) edge (i3) (w2) edge (i2) edge (i1) (v4) edge (i4) (w1) edge (i5);

    \node at (3,.2) {$\in \Graphs_M(4)$};
  \end{tikzpicture}
\]
For concreteness, we pick a homogeneous basis $(e_q)$ of $H(M)$, and we denote by $e^q = (e_q^*)$ the Poincar\'e-dual basis, such that the diagonal in $H(M\times M)$ is represented by the element $e_q \otimes e^q = \sum_q e_q\otimes e_q^*$.

Then the differential in our graph complex acts by edge contraction, and replacing each edge by a diagonal in $H(M)\otimes H(M)$ (cutting the edge).
\begin{equation}\label{equ:edgecsplit}
 d\, 
 \begin{tikzpicture}[baseline=-.65ex]
 \node[int] (v) at (0,0) {};
 \node[int] (w) at (0.7,0) {};
 \draw (v) edge +(-.5,0) edge +(-.5,.5) edge +(-.5,-.5) edge (w)
   (w) edge +(.5,0) edge +(.5,.5) edge +(.5,-.5);
 \end{tikzpicture}
 =
  \begin{tikzpicture}[baseline=-.65ex]
 \node[int] (v) at (0,0) {};
 \node[int] (w) at (0,0) {};
 \draw (v) edge +(-.5,0) edge +(-.5,.5) edge +(-.5,-.5) edge (w)
   (w) edge +(.5,0) edge +(.5,.5) edge +(.5,-.5);
 \end{tikzpicture}
 +
 \sum_q
  \begin{tikzpicture}[baseline=-.65ex]
 \node[int] (v) at (0,0) {};
 \node[int] (w) at (1,0) {};
     \node (i1) at (0.3,.5) {$\scriptstyle e_q$};
      \node (i2) at (0.7,-0.5) {$\scriptstyle e_{q}^*$};
 \draw (v) edge +(-.5,0) edge +(-.5,.5) edge +(-.5,-.5) 
   (w) edge +(.5,0) edge +(.5,.5) edge +(.5,-.5);
   \draw[dotted] (v) edge (i1) (w) edge (i2); 
 \end{tikzpicture}
\end{equation}
Here, if $e_q$ or $e_q^*$ is the unit $1\in H^0(M)$ (or a multiple thereof), we just drop the corresponding decoration. Alternatively, we can say that we could also define our graph complex with decorations in the cohomology $H(M)$ instead of the reduced cohomology $\bar H(M)$, and then impose the relation that units may be dropped from any (internal or external) vertex
\begin{equation}\label{equ:onerelation}
\begin{tikzpicture}[baseline=-.65ex]
 \node[int,label=90:1] (v) at (0,0) {};
 \draw (v) edge +(-.5,-.5) edge +(0,-.5) edge +(.5,-.5);
 \end{tikzpicture}
 =
\begin{tikzpicture}[baseline=-.65ex]
 \node[int] (v) at (0,0) {};
 \draw (v) edge +(-.5,-.5) edge +(0,-.5) edge +(.5,-.5);
 \end{tikzpicture}\, .
\end{equation}
In fact, this relation shall be seen as of "cosmetic" origin, eventually yielding a smaller but quasi-isomorphic complex.

More severely, we shall also note that the cutting operation in \eqref{equ:edgecsplit} might produce a graph with a connected component without external vertices, violating the connectivity condition above.
In that case, the cut-off-subgraph is formally mapped to a number, given a map from such graphs to numbers. That latter map is called partition function $Z$, and combinatorially encodes the real homotopy type of the configuration spaces of points.
In fact, we may understand $Z$ as a Maurer-Cartan element in a dual graph complex $\GC_{\bar H(M)}$, whose elements are formal series in graphs without external vertices such as \eqref{equ:GCMex}, see the following subsection. Furthermore one can separate graphs of various loop orders present in $Z$:
 \begin{equation}\label{equ:Zsplit}
 Z=Z_{tree}+Z_1+Z_2+\cdots,
\end{equation}
where $Z_{tree}$ is the tree piece, $Z_1$ contains only the $1$-loop graphs etc.
The piece $Z_{tree}$ encodes precisely the real homotopy type of $M$, i.e., the $\Com_\infty$ structure on $H(M)$.
The higher corrections $Z_{\geq 2}$ vanish if the dimension $n$ of $M$ satisfies $n\neq 3$.
The piece $Z_1$ also vanishes for degree reasons if $H^1(M)=0$ and can be made to vanish if $n=2$.
\todo[inline]{FlorianX}

Below we shall see that the piece $Z_1$ of the partition function $Z$ appears in our formula for the string cobracket.
This in itself is not a contradiction to the conjectured homotopy invariance of the string topology operations.  However, as we will see below this has the consequence that the $\Diff(M)$-action on string topology does not factor through the homotopy automorphisms.

\subsection{Graph complex (Lie algebra) $\GC_H$ and $\GC_M$, following \cite{CamposWillwacher}}\label{sec:GCM}
We shall need below a more explicit definition of the graph complex in which $Z$ above is a Maurer-Cartan element.
Generally, consider a finite dimensional graded vector space $H$ with a non-degenerate pairing $\epsilon:H\otimes H\to \R$ of degree $-n$. Our main example will be $H=H^\bullet(M)$, the cohomology of a closed oriented connected manifold, with the pairing provided by Poincar\'e duality. With this example in mind, we assume that the subspaces of degree 0 and $n$ are one-dimensional, and there is a distinguished element $1\in H^0$, which in our case will be the unit of the cohomology algebra. We denote the dual element of degree $n$ by $\omega$.
We denote by $\bar H=H^{\neq 0}$ the corresponding reduced version of $H$. 
We also use the notation $1^*,\omega^*$ to denote dual elements in the dual space $H^*$.
For concreteness, we also pick a basis $f_q$ of $H^*$ in degrees $\neq 0,n$, and denote by $f_q^*$ the Poincar\'e-dual basis.

Then we may consider a dg Lie algebra $\GC_{\bar H}'$ whose elements are series of (isomorphism classes of) connected graphs, with vertices carrying decorations by $H^*$.
There is a differential by splitting vertices, or connecting two decorations
\begin{align}\label{equ:GCMdelta}
 \delta \, 
  \begin{tikzpicture}[baseline=-.65ex]
 \node[int] (v) at (0,0) {};
 \node[int] (w) at (0,0) {};
 \draw (v) edge +(-.5,0) edge +(-.5,.5) edge +(-.5,-.5) edge (w)
   (w) edge +(.5,0) edge +(.5,.5) edge +(.5,-.5);
 \end{tikzpicture}
 &=
\sum
\begin{tikzpicture}[baseline=-.65ex]
 \node[int] (v) at (0,0) {};
 \node[int] (w) at (0.7,0) {};
 \draw (v) edge +(-.5,0) edge +(-.5,.5) edge +(-.5,-.5) edge (w)
   (w) edge +(.5,0) edge +(.5,.5) edge +(.5,-.5);
 \end{tikzpicture}
 &
\delta\, 
 \begin{tikzpicture}[baseline=-.65ex]
 \node[int] (v) at (0,0) {};
 \node[int] (w) at (1,0) {};
     \node (i1) at (0.3,.5) {$\scriptstyle \alpha$};
      \node (i2) at (0.7,-0.5) {$\scriptstyle \beta$};
 \draw (v) edge +(-.5,0) edge +(-.5,.5) edge +(-.5,-.5) 
   (w) edge +(.5,0) edge +(.5,.5) edge +(.5,-.5);
   \draw[dotted] (v) edge (i1) (w) edge (i2); 
 \end{tikzpicture}
&=
\epsilon(\alpha,\beta)\, 
 \begin{tikzpicture}[baseline=-.65ex]
 \node[int] (v) at (0,0) {};
 \node[int] (w) at (0.7,0) {};
 \draw (v) edge +(-.5,0) edge +(-.5,.5) edge +(-.5,-.5) edge (w)
   (w) edge +(.5,0) edge +(.5,.5) edge +(.5,-.5);
 \end{tikzpicture}\, .
\end{align}
which should be seen as the dual to the edge contraction and edge splitting \eqref{equ:edgecsplit} above. 
Also, there is a Lie bracket, which is again given by pairing two decorations, replacing them by an edge, schematically:
\begin{equation}\label{equ:GCMbracket}
\left[
 \begin{tikzpicture}[baseline=-.65ex]
 \node[int] (v) at (0,0) {};
 \node[draw,circle, minimum size=7mm] (w) at (-1,0) {$\Gamma$};
     \node (i1) at (0.3,.5) {$\scriptstyle \alpha$};
 \draw (v) edge (w.north east) edge (w) edge (w.south east);
   \draw[dotted] (v) edge (i1);
 \end{tikzpicture}
 ,
 \begin{tikzpicture}[baseline=-.65ex]
 \node[int] (v) at (1,0) {};
 \node[draw,circle, minimum size=7mm] (w) at (2,0) {$\Gamma'$};
      \node (i1) at (0.7,-0.5) {$\scriptstyle \beta$};
 \draw (v) edge (w.north west) edge (w) edge (w.south west);
   \draw[dotted] (v) edge (i1);
 \end{tikzpicture}
 \right]
 =
 \epsilon(\alpha,\beta)\, \,
  \begin{tikzpicture}[baseline=-.65ex]
 \node[int] (v) at (0,0) {};
 \node[draw,circle, minimum size=7mm] (w) at (-1,0) {$\Gamma$};
 \node[draw,circle, minimum size=7mm] (w2) at (1.5,0) {$\Gamma$};
 \node[int] (v2) at (.5,0) {};
 \draw (v) edge (w.north east) edge (w) edge (w.south east) edge (v2);
  \draw (v2) edge (w2.north west) edge (w2) edge (w2.south west);
 \end{tikzpicture}
 \, .
\end{equation}
There are also sign and degree conventions, which we shall largely ignore here, but refer the reader to the original reference \cite{CamposWillwacher} instead.
There is a Maurer-Cartan element 
\begin{equation}\label{equ:GCHz}
z=
\sum_{j\geq 0} \frac 1 {j!} \left(
\begin{tikzpicture}[baseline=-.65ex]
 \node[int] (v) at (0,0) {};
     \node (i1) at (0.3,.5) {$\scriptstyle (1^*)^j$};
     \node (i2) at (0.3,-0.5) {$\scriptstyle \omega^*$};
   \draw[dotted] (v) edge (i1) edge (i2); 
 \end{tikzpicture}
 +
 \frac 1 2 \,
 \sum_q\,
\begin{tikzpicture}[baseline=-.65ex] 
 \node[int] (v) at (0,0) {};
     \node (i3) at (0.3,.5) {$\scriptstyle (1^*)^j$};
     \node (i1) at (0.3,-.5) {$\scriptstyle f_q$};
     \node (i2) at (-0.3,-0.5) {$\scriptstyle f_q^*$};
   \draw[dotted] (v) edge (i1) edge (i2); 
 \end{tikzpicture}
 \right)
\end{equation}
where the notation $(1^*)^j$ shall indicate that $j$ copies of $1^*$ are present decorating the vertex. We shall define $\GC_H:= (\GC_H')^z$ as the twist by this MC element.

It is convenient to also define a cosmetic variant, getting rid of decorations by $1^*$ in graphs. (This is also done in \cite{CamposWillwacher}.)
More precisely, we construct a dg Lie algebra $\GC_{\bar H}$ by repeating the construction of $\GC_{H}$, except for the following differences:
\begin{itemize}
    \item We only allow decorations by $\bar H^*$ in graphs.
    \item In the differential \eqref{equ:GCMdelta} and bracket \eqref{equ:GCMbracket} we tacitly assume that every vertex is decorated by copies of elements $1^*$, i.e., a decoration $\omega^*$ is replaced by an edge to any vertex.
    \item In the MC element $z$ of \eqref{equ:GCHz} we merely drop all terms involving decorations by $1^*$, leaving only the $j=0$-term in the outer sum.
\end{itemize}

Thus we obtain a dg Lie algebra $\GC_{\bar H}$.
There is a natural map of dg Lie algebras
\begin{equation}\label{equ:GCbarHGCH}
\GC_{\bar H} \to \GC_H
\end{equation}
by sending a graph to all possible graphs obtainable by adding $1$-decorations to all vertices.
Formally, to each vertex, we do the following operation
\[
\begin{tikzpicture}[baseline=-.65ex]
 \node[int] (v) at (0,0) {};
  \draw (v) edge +(-.5,0) edge +(-.5,.5) edge +(-.5,-.5);    
 \end{tikzpicture}
 \, \,
\mapsto
\, \,
\sum_{j\geq 0} \frac 1 {j!}\,
 \begin{tikzpicture}[baseline=-.65ex]
   \node[int] (v) at (0,0) {};
   \node (i1) at (0.3,.5) {$\scriptstyle (1^*)^j$};
   \draw[dotted] (v) edge (i1);
   \draw (v) edge +(-.5,0) edge +(-.5,.5) edge +(-.5,-.5);
 \end{tikzpicture}\, .
\]
(The map is in fact a quasi-isomorphism, though we shall not use this.)

The Maurer-Cartan element $Z$ of \cite{CamposWillwacher} takes values in $\GC_{\bar H(M)}$.
However, given the map \eqref{equ:GCbarHGCH} we may map it to another MC element $Z\in \GC_H$, which we shall denote by the same letter, abusing notation.

There is one important observation, for which we refer to \cite{CamposWillwacher}.
The MC element $Z\in \GC_{\bar H(M)}$ may be taken to be composed of graphs which are at least trivalent, i.e., the valency of any vertex in any graph occurring is $\geq 3$.
The valency of a vertex is defined to be the number of elements in its star, which is in turn the set of half-edges and decorations incident at that vertex.
Later on we shall need this observation in the following form.
If we consider the MC element $z+Z\in \GC'_{H(M)}$, and we consider only its trivalent part, then the only graphs which ever contain any decorations by $1^*$ are those from the part $z$, and can explicitly be read off from \eqref{equ:GCHz} above.

\section{String topology operations}
\label{sec:stringtopology}
The goal of this section is to introduce the construction of the string product and coproduct, in the form we will be using them.
We will generally work in the cohomological setting, i.e., we will define the operations on the cohomology of the loop space, not on homology as usual.

\subsection{Preliminaries}
Let $M$ be a closed oriented manifold. Let $\FM_M(2)$ denote the compactified configuration space of two (labelled) points on $M$. It can be constructed as the real oriented blowup of $M \subset M \times M$, and thus its boundary can be identified with the unit tangent bundle $UTM$. It naturally fits into the following commuting square
\begin{equation}\label{diag:preucolim}
\begin{tikzcd}
UTM \ar[r] \ar[d]& \FM_M(2) \ar[d] \\
M \ar[r] & M \times M,
\end{tikzcd}
\end{equation}
which is actually a homotopy pushout as the map $UTM \to \FM_M(2)$ is a cofibration. The vertical homotopy cofibers are two versions of the Thom space of $M$. The diagram hence gives a homotopy equivalence between the Thom space $DM / UTM$ (with $DM\to M$ the unit disk bundle) and $M \times M / (M \times M \setminus M)$ that does not depend on a tubular neighborhood embedding. We will exploit this fact in our construction of string topology operations.

As an example, consider the map $H_\bullet(M\times M) \to H_{\bullet-n}(M)$ obtained by intersecting with the diagonal. Given the above observation we may realize the dual map on cohomology by the zigzag
\begin{equation}\label{equ:example_diag}
\begin{tikzcd}
H^\bullet(M\times M) & \ar[l] H^\bullet( M \times M , \FM_M(2)) \ar[d, "\simeq"] &  & \\
& H^\bullet(M, UTM) & \ar[l, "\wedge Th"] H^{\bullet-n}(M),
\end{tikzcd}
\end{equation}
where the last arrow is the Thom isomorphism (multiplication by the Thom form) and the fact that the vertical arrow is an isomorphism uses that \eqref{diag:preucolim} is a homotopy pushout.
While this zigzag might not be the the simplest expression for the intersection with the diagonal, it has the advantage that it is relatively straightforward to realize on the cochain complex level, without many "artificial choices".
The construction of the string topology operations prominently involves the intersection with the diagonal, and hence we will be using this example below. We shall need a slight extension, allowing for pullbacks.
To this end, let us note the following.
\begin{Lem}
\label{lem:ucolim}
Let $E \to M \times M$ be a fibration. Then the following diagram (of pullbacks) is a homotopy pushout.
\begin{equation}
\label{diag:ucolim}
\begin{tikzcd}
E|_{UTM} \ar[r] \ar[d]& E|_{\FM_M(2)} \ar[d] \\
E|_{M} \ar[r] & E,
\end{tikzcd}
\end{equation}
in particular the maps between cofibers are equivalences.
\end{Lem}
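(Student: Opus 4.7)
The plan is to apply Mather's cube theorem in its ``downward'' form: in a commutative cube all of whose side faces are homotopy pullbacks, the top face is a homotopy pushout whenever the bottom face is. I would set up the cube with bottom face \eqref{diag:preucolim} and top face \eqref{diag:ucolim}, connected by four vertical edges given by the fibration $p : E \to M\times M$ and its restrictions $E|_X \to X$ for $X\in\{UTM,\FM_M(2),M\}$.

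Each of the four vertical faces of this cube is a pullback square, by the very definition of the restrictions $E|_X := E\times_{M\times M} X$. Since $p$ is a fibration, each restriction $E|_X \to X$ is again a fibration, so every vertical face is in fact a homotopy pullback. On the other hand, the bottom square \eqref{diag:preucolim} is a homotopy pushout by the observation recorded immediately above the lemma, which uses that $UTM \hookrightarrow \FM_M(2)$ is a cofibration (it is the inclusion of the boundary produced by the real oriented blow-up of the diagonal). Invoking Mather's cube theorem now yields that \eqref{diag:ucolim} is a homotopy pushout, and the ``in particular'' clause about the maps between cofibers is then an immediate formal consequence: in any homotopy pushout square the induced map between the homotopy cofibers of either pair of parallel arrows is a weak equivalence.

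The main (minor) obstacle is really just citing the cube theorem in precisely the form needed; a self-contained alternative is to observe that, working in an appropriate model structure, the pullback of the cofibration $UTM \hookrightarrow \FM_M(2)$ along the fibration $p$ is again a cofibration, so that \eqref{diag:ucolim} is even a strict pushout along a cofibration and hence automatically a homotopy pushout. Either route gives the result without any manifold-specific input beyond what has already been used to establish that \eqref{diag:preucolim} is a homotopy pushout.
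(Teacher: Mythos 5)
Your proposal is correct and coincides with the paper's own proof, which likewise offers both arguments: that the square is a strict pushout along the cofibration $E|_{UTM}\hookrightarrow E|_{\FM_M(2)}$ (the pullback of a cofibration along a fibration), and, alternatively, Mather's cube theorem applied to the cube with bottom face \eqref{diag:preucolim}. You merely present the two routes in the opposite order and spell out the verification of the cube's side faces in more detail.
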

\begin{proof}
The diagram is clearly a pushout, and the map $E|_{UTM} \to E|_{\FM_M(2)}$ is a cofibration. Alternatively, this is Mather's cube theorem, visualizing \eqref{diag:ucolim} as the top face of a cube with bottom face \eqref{diag:preucolim}.
\end{proof}

Let $PM \to M \times M$ be the path space fibration of $M$. We will denote by $LM$ the free loop space $M^{S^1} = PM \times_{M \times M} M$. 

\subsection{String product (Cohomology coproduct)}
We define the string product
$$
H^\bullet(LM) \otimes H^\bullet(LM) \longleftarrow H^{\bullet -n}(LM),
$$
to be the composite of the maps
\begin{equation}\label{equ:product_zigzag}
\begin{tikzcd}
H^\bullet(LM) \otimes H^\bullet(LM) & \ar[l] H^\bullet( LM \times LM , LM \times^\prime LM) \ar[d, "\simeq"] &  & \\
& H^\bullet(\Map(8), \Map^\prime(8)) & \ar[l, "\wedge Th"] H^{\bullet-n}(\Map(8)) & \ar[l] H^{\bullet-n}(LM),
\end{tikzcd}
\end{equation}
which we will describe now. We apply Lemma \ref{lem:ucolim} to the fibration $LM \times LM \to M \times M$ to obtain the following homotopy pushout diagram
$$
\begin{tikzcd}
\Map^\prime(8) \ar[r] \ar[d]& LM \times^\prime LM \ar[d] \\
\Map(8) \ar[r] & LM \times LM,
\end{tikzcd}
$$
where each entry is defined to be the fiber product of $LM \times LM$ with the corresponding term in \eqref{diag:preucolim} over $M \times M$, for example
\[
\Map'(8):= (LM\times LM) \times_{M\times M} UTM,
\]
which can be thought of as the space of figure eights in $M$ together with a tangent vector at the node of the eight.
This defines all the spaces in the definition of the string product and explains the vertical isomorphism in \eqref{equ:product_zigzag}. The third map in \eqref{equ:product_zigzag} is given by multiplying with the image of the Thom class $Th \in H^n(M, UTM)$ in $H^n(\Map(8), \Map^\prime(8))$. The last map is induced by the natural map $\Map(8) \to LM$, traversing both "ears" of the figure 8, in a fixed order.

\begin{Rem}
Since later it will be more natural to work in the cohomological setting, but the string topological operation have a geometric meaning, we still call the above map a product, even though it has the signature of a coproduct. To make up for this we will often write the maps from right to left.
\end{Rem}

The diagram \eqref{equ:product_zigzag} is obtained from the chain-level version of diagram \eqref{equ:example_diag} by taking fiber product with a certain fibration. More precisely, let us consider the following diagram of pairs of spaces
$$
\begin{tikzcd}
M \times M \ar[r] &  (M \times M, \FM_M(2)) & \\
& \ar[u, "\simeq"] (M, UM) \ar[r, dashed, "Th"] & M,
\end{tikzcd}
$$
where the dashed map is the Thom isomorphism and exists on chain level.
In particular, this induces the corresponding maps upon taking the fiber product with $LM \times LM \to M \times M$.
The map
$$
\begin{tikzcd}
H^\bullet(\Map(8), \Map^\prime(8)) & \ar[l, "\wedge Th"'] H^{\bullet-n}(\Map(8)),
\end{tikzcd}
$$
on chain level is given by taking the cup product with the pullback of the Thom class $Th \in C^n(M, UTM)$ along the map of pairs
$$
\begin{tikzcd}
(\Map(8), \Map^\prime(8)) \ar[r] & (M , UTM).
\end{tikzcd}
$$

\subsection{String coproduct (cohomology product)}
The coproduct operation we are interested in is defined on loops relative to constant loops, i.e.
$$
H^\bullet(LM, M) \longleftarrow H^\bullet(LM, M) \otimes H^\bullet(LM, M) [n-1].
$$
We will later see that it admits a natural lift to $H^\bullet(LM)$ in case that there is a non-vanishing vector field on $M$ (in particular $M$ has Euler characteristic $0$). The coproduct will be constructed in a similar fashion to the product, but this time we apply Lemma \ref{lem:ucolim} to the fibration $E = PM \times_{M\times M} PM \to M \times M$. We will identify $PM \times_{M\times M} PM$ with $\Map(\bigcirc_2)$, the space of two-pointed loops in $M$. Taking the fiber product with the diagram \eqref{diag:preucolim} we obtain the homotopy pushout
$$
\begin{tikzcd}
\Map^\prime(8) \arrow{r} \arrow{d} & \Map^\prime(\bigcirc_2) \arrow{d} \\
\Map(8) \arrow{r} & \Map(\bigcirc_2).
\end{tikzcd}
$$
Let us define the spliting/reparametrization map
\begin{equation}\label{equ:splittingmap_def}
\begin{aligned}
    s : I \times LM &\longrightarrow \Map(\bigcirc_2) \\
    (t, \gamma) &\longmapsto (s \mapsto \begin{cases} \gamma(\tfrac{1}{2}st) &\mbox{if } 0 \leq s \leq \tfrac{1}{2} \\ \gamma(2t(1-s) + 2(s-\tfrac{1}{2}) & \tfrac{1}{2} \leq s \leq 1 \end{cases}.
\end{aligned}
\end{equation}
This map sends the boundary of the interval into $F := LM \coprod_M LM \subset \Map(\bigcirc_2)$, that is the space of loops with two marked points where either one of the intervals is mapped to a constant. We thus obtain a map
$$
H^\bullet(LM, M) \overset{s^\bullet}{\longleftarrow} H^{\bullet+1}(\Map(\bigcirc_2), LM \coprod_M LM).
$$
Naturally $F := LM \coprod_M LM \to M$ is a fibration. We are in the situation of having a fibration $E \to M \times M$ together with a map $F \to E|_{M}$ and we seek to construct a map
$$
H^\bullet(E,F) \to H^{\bullet -n} (E|_{M}, F),
$$
by "intersecting with the diagonal".
Given a Thom class $Th \in H^n(M\times M, \FM_M(2))$, the authors of \cite{GoreskyHingston,HingstonWahl} 
construct such a map by multiplying a relative cochain in $C^\bullet(E,F)$ with a representative of the Thom class which has support in a tubular neighborhood of $M \subset U_\epsilon \subset M \times M$, thus obtaining a cochain in $C^{\bullet + n}( E|_{U_\epsilon}, F)$ and then composing with a retraction of the bundle $E|_{U_\epsilon}$ to $E|_M$.

We shall rather use an extension of \eqref{equ:example_diag}. Recall that the vertical homotopy cofibers in the diagram
$$
\begin{tikzcd}
M \times M & \ar{l} M \\
\FM_M(2) \ar{u} & \ar{l} \ar{u} UTM
\end{tikzcd}
$$
are both copies of the Thom space $DM / UTM$, and the induced map is an equivalence. We will again pull back the fibrations $E$ and $F$ along these maps, to obtain the following maps of pairs (we denote cofibers by ordinary quotients here)
$$
\begin{tikzcd}
(E,F) \ar{r} & (E / E|_{\FM_M(2)}, F / F|_{UTM} ) \\
& \ar{u}{\simeq} ( E|_M / E|_{UTM}, F / F|_{UTM} ).
\end{tikzcd}
$$
Now we are left with a pair of fibrations over the Thom pair $(M , UTM)$ and we can apply the Thom isomorphism, that is, we multiply with a Thom form $Th \in \Omega^\bullet(M, UTM)$. 
Note that we can identify $E|_M$ with $\Map(8)$ and under this identification, $F$ corresponds to the space of figure 8's with at least one ear constant. We define the loop coproduct to be the composite
\begin{equation}\label{equ:string coproduct zigzag}
\begin{tikzcd}
H^\bullet(LM,M)  & H^{\bullet+1}(\Map(\bigcirc_2), LM \coprod_M LM) \ar[l]& H^{\bullet+1-n}(\Map(8), F) \ar[l] & H^\bullet(LM,M)^{\otimes 2}[n-1] \ar[l]
\end{tikzcd}
\end{equation}
where the last map is induced by the map $\Map(8) \to LM \times LM$. On the "space"-level the diagram is given by
\begin{equation}
\label{eqn:defredcop}
\begin{tikzcd}
\frac{LM}{M} \ar[dashed]{r}{\text{suspend}} &\frac{I \times LM}{\partial I \times LM\cup I\times M} \ar[r, "s"] & \frac{\Map(\bigcirc_2)}{F} \ar[r] & \frac{\Map(\bigcirc_2) / \Map^\prime(8)}{ F/ F|_{UTM}} & \\
&&& \frac{\Map(8) / \Map^\prime(8)}{F / F|_{UTM}} \ar[u, "\simeq"] \ar[r, dashed, "Th"] & \frac{\Map(8)}{F},
\end{tikzcd}
\end{equation}
where we again wrote cofibers and cofibers of cofibers as fractions and pullbacks as restrictions. Note that the main part of this diagram is induced by maps on the base after taking fiber product with $E$ and $F$ over $M \times M$ and $M$, respectively. More concretely, we have the following diagram of pairs of pairs
$$
\begin{tikzcd}
\frac{M \times M}{M} \ar[r] & \frac{(M \times M, \FM_M(2) )}{( M, UTM)} & \\
& \frac{(M, UTM)}{(M, UTM)} \ar[u, "\simeq"] \ar[r, dashed, "Th"] & \frac{M}{M},
\end{tikzcd}
$$
where in the numerator we have pairs of spaces over $M \times M$ and in the denominator pairs of spaces over $M$. To get the previous diagram \eqref{eqn:defredcop}, one takes fiber product of the numerator with $E$ over $M\times M$ and of the denominator with $F$ over $M$ and realizes the corresponding cofibers.

\begin{Rem}
If one wishes to invert the above homotopy equivalence to write down an "actual" map, one needs to write down an inverse map of pairs of pairs that induces a homotopy inverse after taking the fiber products and taking the cofibers. Since we realize cofibers later, a "map" of pairs does not need to be defined everywhere. For instance, a "map" $M \times M \to (M, UTM)$ can be given by describing the map on a tubular neighborhood of the diagonal and providing a map to $UTM$ on the punctured tubular neighborhood. The problem is that the pair $(M, UTM)$ in the numerator is not fibrant as spaces over $M\times M$, thus we need to find a fibrant replacement, for instance $(M,UTM) \to (PM \times_M PM, PM\times_M UTM \times_M PM)$. Then a "map" $M \times M \to (PM \times_M PM, PM\times_M UTM \times_M PM)$ is obtained by connecting points that are close and providing the corresponding vector if they are not equal.
\end{Rem}

\subsection{String coproduct ($\chi(M) = 0$ case)}
\label{sec:framedcoprod}
Let $M$ be a closed manifold with trivialized Euler class. In particular, we can assume that the Thom class is represented by a fiberwise volume form on $UTM$, that is $Th \in C^{n-1}(UTM) \subset C^{n}(M, UTM)$ is closed. In this case, one can lift the coproduct to a map
$$
H^\bullet(LM) \longmapsfrom H^\bullet(LM)^{\otimes 2}[d-1]
$$
defined by the following zig-zag
$$
\begin{tikzcd}
H^\bullet(LM)  & \arrow{l}{s^*} H^{\bullet-1}(\Map(\bigcirc_2),\Map(8)) \arrow{d}{\simeq} \\
 & H^{\bullet-1}(\Map^\prime(\bigcirc_2),\Map^\prime(8))  &  \arrow{l}{\delta}  H^{\bullet-2}(\Map^\prime(8))  & \arrow[l, "\wedge Th"] H^{\bullet-n-1}(\Map(8)).
\end{tikzcd}
$$
It is induced by the following maps of spaces
$$
\begin{tikzcd}
(I, \partial I) \times LM \arrow[r, "s"] & (\Map(\bigcirc_2),\Map(8)) \\
 & (\Map^\prime(\bigcirc_2),\Map^\prime(8)) \arrow{u}{\simeq} \arrow{r} & \Sigma\Map^\prime(8) \arrow{r} & Th\Map(8),
\end{tikzcd}
$$
where the last map is induced by the Thom collapse along the embedding $M \to UTM$ (implicitly given by the trivialization of the Euler class). On chain level it is given by multiplying with the pullback of the fiberwise volume form $Th \in C^{n-1}(UTM)$ along the map $\Map'(8) \to UTM$.

The vertical excision isomorphism follows again from Lemma \ref{lem:ucolim} applied to the fibration $\Map(\bigcirc_2) = PM \times PM \to M \times M$, where this time we consider the horizontal cofibers.

The entire zig-zag except for the splitting map is obtained from the following zig-zag by taking fiber product with the fibration $\Map(\bigcirc_2) = PM \otimes PM \to M \times M$.
\begin{equation}
\label{equ:1frdefcop}
\begin{tikzcd}
(M \times M, M) \\
\ar[u] (\FM_M(2), UTM) \ar[r] & \Sigma UTM = (pt, UTM)  \ar[r, dashed, "Th"] & M
\end{tikzcd}
\end{equation}

\subsection{Definition of string bracket and cobracket}
\label{sec:stringbracketdef}
The original version of the string bracket and cobracket \cite{ChasSullivan,ChasSullivan2} was defined on the equivariant (co)homology of $LM$ relative to the constant loops, $H^\bullet_{S^1}(LM,M)$.
For our purposes, we consider a version of the definition using the string coproduct, provided essentially by Goresky and Hingston \cite[section 17]{GoreskyHingston}, see also \cite{HingstonWahl}.
To this end consider the $S^1$-bundle $\pi:LM\simeq LM\times ES^1\to LM_{S^1}$.
This gives rise to the Gysin long exact sequence for equivariant cohomology
\[
\cdots \to  H^\bullet(LM) \xrightarrow{\pi^*} H_{S^1}^\bullet(LM)
\to H_{S^1}^{\bullet+2}(LM) \xrightarrow{\pi_!}
H^{\bullet+1}(LM) \to \cdots
\]
One has a similar sequence for reduced (equivariant) cohomology.
Now, we define the string bracket (cohomology cobracket) operation (up to sign) as the composition 
\begin{equation}\label{equ:defbracket}
\bar H^\bullet_{S^1}(LM)
\xrightarrow{\pi^*}
\bar H^\bullet(LM)
\xrightarrow{\cdot}
(\bar H^\bullet(LM)\otimes \bar H^\bullet(LM))[n]
\xrightarrow{\pi_!\otimes \pi_!}
(\bar H^\bullet_{S^1}(LM)\otimes \bar H^\bullet_{S^1}(LM))[n-2].
\end{equation}
Here $\cdot$ is the string product. 

For the string cobracket (cohomology bracket) we similarly use the composition (up to sign)
\begin{equation}\label{equ:defcobracket}
\bar H^\bullet_{S^1}(LM)\otimes \bar H^\bullet_{S^1}(LM)
\xrightarrow{\pi^*\otimes \pi^*}
\bar H^\bullet(LM)\otimes \bar H^\bullet(LM)
\to 
 H^\bullet(LM,M)\otimes H^\bullet(LM,M)
\xrightarrow{*}
H^\bullet(LM,M)[n-1]
\to \bar H^\bullet(LM)[n-1]
\xrightarrow{\pi_!}
\bar H^\bullet_{S^1}(LM)[n-2],
\end{equation}
where the map $H^\bullet(LM,M) \to \bar{H}^\bullet(LM)$ uses that $M \subset LM$ is a retract.
\begin{Rem}
Note that the map $H^\bullet(LM,M) \to \bar H^\bullet(LM)$ does not depend on choosing a basepoint. More precisely, in our convention $\bar H^\bullet(LM) = H^\bullet( \text{pt}, LM)[1]$, i.e. it is the cohomology of the cofiber of the map $LM \to \text{pt}$ shifted by one (and not relative to a basepoint). The map above then comes from the fact that the long exact sequence associated to the cofiber diagram
$$
\begin{tikzcd}
\frac{LM}{M} \ar[r] &\frac{\text{pt}}{M} \ar[r] &  \frac{\text{pt}}{LM}
\end{tikzcd}
$$
splits since $\frac{\text{pt}}{M} \to \frac{\text{pt}}{LM}$ is a retract.
\end{Rem}
\todo[inline]{FlorianX: It seems it does not depend on basepoints.}

In the $\chi(M) = 0$ case, there is no need to work relative to constant loops and on reduced homologies. The bracket and cobracket are defined on $H^\bullet_{S^1}(LM)$ in this case.

\section{Cochain complex models}\label{sec:cochain models}
Having defined our version of the string topology operations on cohomology the goal of this section and the next is to introduce concrete cochain complexes that compute these cohomologies. Furthermore we will find the concrete maps on cochain complex that realize, for example, the zigzag \eqref{equ:product_zigzag} for the string product and its variant for the coproduct.
These chain complex models will eventually allow us to prove the main theorems \ref{thm:main_1}-\ref{thm:main_3} in later sections, by explicitly tracing cochains through the zigzags.

We also note that for non-simply-connected situation our "models" are not actually models, i.e., their cohomology is generally not the same as that of the loop spaces considered.
However, mind that in this situation (Theorem \ref{thm:main_3}) our only goal is to check that the map in one direction from the cyclic chains to the cohomology $\bar{H}_{S^1}(LM)$ respects the Lie bialgebra structure, not that the map is an isomorphism, which would be wrong.

\subsection{Iterated integrals and model for path spaces}\label{sec:it int}
Let for now $A := \Omega^\bullet(M)$ denote the algebra of differential forms on $M$. We denote by $B = B(A,A,A)$ the two sided bar construction, namely
$$
B(A,A,A) = \oplus_{n\geq 0} A \otimes \bar{A}[1]^{\otimes n} \otimes A.
$$
We recall that since $A$ is commutative, $B$ is a commutative dg algebra with the shuffle product. It is moreover a coalgebra in $A$-bimodules under the natural deconcatenation coproduct
\begin{align*}
B &\longrightarrow B \otimes_A B \\
\alpha = (\alpha_0 | \alpha_1 \dots \alpha_k | \alpha_{-1}) &\longmapsto \alpha' \otimes \alpha'' = \sum_i (\alpha_0 | \alpha_1 \dots \alpha_i | 1 | \alpha_{i-1} \dots \alpha_k | \alpha_{-1}),
\end{align*}
with counit $\epsilon: B \to A$.

Following Chen \cite{Chen}, we define
\begin{align*}
    ev_n \colon \Delta^n \times PM &\longrightarrow M \times M^{n} \times M \\
    ((t_1, \ldots, t_n), \gamma) &\longmapsto (\gamma(0), \gamma(t_1), \ldots, \gamma(t_n), \gamma(1)),
\end{align*}
where $\Delta^n = \{ (t_1, \ldots, t_n ) \ | \ 0 \leq t_1 \leq \ldots \leq t_n \leq 1\}$ is the standard simplex, and the fiber integral
$$
\int_{\Delta^n} \colon \Omega^\bullet( \Delta^n \times PM) \to \Omega^\bullet(PM).
$$
And hence
\begin{align*}
\int : B \to \Omega^\bullet(PM)\\
\int = \bigoplus_{n \geq 0} \int_{\Delta_n} \circ \ ev_n^*.
\end{align*}

\begin{Lem}
\label{lem:barpath}
The map $\int: B \to \Omega^\bullet(PM)$ is a quasi-isomorphism of algebras. Furthermore, the following diagrams commute
$$
\begin{tikzcd}
B \ar{r}{\int} & \Omega^\bullet(PM) \\
A \otimes A \ar{u} \ar{r} & \Omega^\bullet(M \times M) \ar{u}
\end{tikzcd}
\quad
\begin{tikzcd}
B \ar{d}{\epsilon} \ar{r}{\int} & \Omega^\bullet(PM) \ar{d}{const^*} \\
A  \ar{r} & \Omega(M)
\end{tikzcd}
\quad
\begin{tikzcd}
B \ar{d} \ar{r}{\int} & \Omega^\bullet(PM) \ar{d} \\
B \otimes_A B  \ar{r} & \Omega(PM \times_M PM)
\end{tikzcd}
$$
\end{Lem}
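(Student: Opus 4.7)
The plan is to separate the statement into four parts: (i) $\int$ is a chain map, (ii) $\int$ is an algebra map with respect to the shuffle product on $B$ and the wedge product on $\Omega^\bullet(PM)$, (iii) the three diagrams commute, and (iv) $\int$ is a quasi-isomorphism. The first three are essentially formal, and the heart of the statement is the quasi-isomorphism, which is classical (Chen).

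\emph{Chain map and algebra map.} To see that $\int$ commutes with differentials, I would invoke Stokes' theorem on the fiber integration $\int_{\Delta^n}\circ\, ev_n^*$. The boundary $\partial \Delta^n$ has two kinds of faces: the "interior" ones $\{t_i=t_{i+1}\}$ contribute the products $\alpha_i\alpha_{i+1}$ that appear in the Hochschild differential on $B$, while the two "outer" faces $\{t_1=0\}$ and $\{t_n=1\}$ contribute the boundary multiplications against the outer factors $\alpha_0$ and $\alpha_{-1}$. Combined with the de Rham differential on each $\Omega^\bullet(\Delta^n\times PM)$, this reproduces the full differential on $B$. For the algebra map property, I would invoke the classical shuffle-product formula for iterated integrals: the product of two iterated integrals of lengths $p$ and $q$ equals the sum over $(p,q)$-shuffles of the iterated integral of length $p+q$, which matches the shuffle product on $B$.

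\emph{The three diagrams.} Each of these is by inspection. For the first diagram, the inclusion $A\otimes A\hookrightarrow B$ hits precisely the $n=0$ summand, where $ev_0$ is exactly the endpoint evaluation $PM\to M\times M$; pulling back and (trivially) integrating over $\Delta^0$ gives the claim. For the second diagram, pulling back forms along the inclusion of constant loops annihilates the $n\geq 1$ summands, because in those summands the $t_i$-derivatives of $ev_n^*\alpha_i$ vanish on constant loops so the integrand is zero after fiber integration for degree reasons, and only the $n=0$ summand survives, yielding precisely $\epsilon$. For the third diagram, the key observation is that concatenation $PM\times_M PM\to PM$ rescales the parameter, and the pullback of $ev_n^*$ decomposes the simplex $\Delta^n$ according to which of the $t_i$ fall in $[0,\tfrac12]$ versus $[\tfrac12,1]$; integrating over these sub-simplices produces precisely the deconcatenation coproduct on $B$.

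\emph{Quasi-isomorphism.} This is the main obstacle, and I would reduce it to the pullback-pushout lemma recalled earlier in the paper. The key point is that $B(A,A,A)$ is a standard semi-free resolution of $A$ as a commutative dg $A\otimes A$-algebra (via the multiplication map $A\otimes A\to A$), and that $PM\to M\times M$ is a Serre fibration whose fiber is the space of paths between two given points, making $PM$ homotopy-equivalent to $M$ in a way that is compatible with the diagonal inclusion $M\to M\times M$. Thus, up to homotopy, $PM$ fits into the (homotopy) pullback square of $M\xrightarrow{\Delta}M\times M\xleftarrow{\Delta} M$, and the pullback-pushout lemma (in the simply connected case, or by naturality of the iterated integral construction in general) identifies $B(A,A,A)=A\otimes^h_{A\otimes A}A$ with $\Omega^\bullet(PM)$ as a commutative dg $A\otimes A$-algebra. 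For a direct (non-simply-connected) argument, one can instead filter $B$ by word length, recognize the associated spectral sequence as the Eilenberg--Moore spectral sequence of the fibration $PM\to M\times M$, and use Chen's theorem that it converges to $H^\bullet(PM)$.
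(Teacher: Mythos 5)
The formal parts of your proof (chain map via Stokes on the boundary faces of $\Delta^n$, shuffle formula for the algebra structure, inspection of the three diagrams) are all essentially correct and standard. The gap is in the quasi-isomorphism argument, which misidentifies the relevant objects in two related ways. First, $PM$ is \emph{not} the homotopy pullback of $M \xrightarrow{\Delta} M\times M \xleftarrow{\Delta} M$; that homotopy pullback is the free loop space $LM$. The path space $PM$ is a fibrant replacement of the diagonal $M\to M\times M$, i.e.\ it is homotopy-equivalent to $M$ (retraction to constant paths), not to $LM$. Second, $B(A,A,A)$ is \emph{not} $A\otimes^h_{A\otimes A}A$; the bar construction $B(A,A,A)$ is a cofibrant replacement of $A$ as an $A\otimes A$-module (or $A$-bimodule), and it is the further tensor product $B(A,A,A)\otimes_{A\otimes A}A$ that computes $A\otimes^h_{A\otimes A}A$ (the Hochschild complex). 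So the pullback-pushout argument does not go through as stated: you have matched the wrong space with the wrong model. The backup via the Eilenberg--Moore spectral sequence is also not set up cleanly (what is it computing, and why does it converge here?), though in principle such an argument could be made to work.

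The quick and correct route is hiding in your third part: you already established the \emph{second} commuting square, which identifies $const^*\circ\int$ with $\epsilon: B\to A$ followed by the identity $A=\Omega^\bullet(M)$. The counit $\epsilon: B(A,A,A)\to A$ is a quasi-isomorphism by the standard acyclicity of the two-sided bar construction, and $const^*: \Omega^\bullet(PM)\to\Omega^\bullet(M)$ is a quasi-isomorphism because the inclusion of constant paths $M\hookrightarrow PM$ is a deformation retract. By two-out-of-three, $\int$ is a quasi-isomorphism. No Eilenberg--Moore or pullback-pushout machinery is needed. (The pullback-pushout lemma does become relevant later in the paper, e.g.\ when passing from $PM$ to $LM$ or $PM\times_M PM$ by fiber product with further spaces over $M\times M$, but that is a separate point and is not what you want here.)
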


We may hence take the bar construction $B$ as our model for $PM$, in the sense that B is a dgca with a quasi-isomorphism to $\Omega(PM)$.

Furthermore, we shall later be flexible and replace $A=\Omega(M)$ by different dgca models for $M$, denoted also by $A$, slightly abusing notation. We can then still use the two-sided bar construction $B:=B(A,A,A)$ as our model of $PM$.
In practice, we shall not use the dgca structure, but only the $A$-bimodule structure on $B$, and use that $B$ is cofibrant as an $A$-bimodule.

\subsection{Splitting map}
\label{sec:splittingmap}
Let $\mathcal{I} = \R (1-t) \oplus \R t \oplus \R dt \subset \Omega^\bullet(I)$ denote the space of Whitney forms on the interval with projection
\begin{align*}
     \Omega^\bullet(I) &\longrightarrow \mathcal{I} \\
    f &\longmapsto f(0)(1-t) + f(1) t + (\int_I f) dt
\end{align*}

The splitting map $s$ (see \eqref{equ:splittingmap_def}) can be derived from a map
$$
I \times PM \to PM \times_M PM,
$$
given by the same formula which induces
$$
\Omega^\bullet(PM \times_M PM) \to \mathcal{I} \otimes \Omega^\bullet(PM).
$$
\begin{Prop}
The following diagram commutes
$$
\begin{tikzcd}
\mathcal{I} \otimes B \ar{d} & \ar{l} \ar{d} B \otimes_A B \\
\mathcal{I} \otimes \Omega^\bullet(PM) & \ar{l} \Omega^\bullet(PM \times_M PM),
\end{tikzcd}
$$
where
\begin{align*}
    B \otimes_A B &\longrightarrow \mathcal{I} \otimes B \\
    (x | \alpha | y | \beta | z) \longmapsto &(1-t) \otimes (\epsilon(x\alpha y)| \beta | z)  \\
    & + t \otimes (x | \alpha | \epsilon(y \beta z)) \\
    & + dt \otimes (-1)^{x \alpha y} (x | \alpha y \beta | z)
\end{align*}
\end{Prop}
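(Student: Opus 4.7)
My plan is to verify the proposition by checking the commutativity on each of the three Whitney basis vectors of $\mathcal{I}$, namely by composing with evaluation at $t=0$, evaluation at $t=1$, and the fiber integral $\int_I$. Since these linear functionals jointly separate elements of $\mathcal{I}$, commutativity after applying each of them is equivalent to the desired commutativity.

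\textbf{Endpoint evaluations.} Composing the bottom horizontal map with $\mathrm{ev}_{t=0}$ amounts to pulling back along $\{0\} \times PM \hookrightarrow I \times PM \to PM \times_M PM$. By definition of the splitting family, this composite sends a path $\gamma$ to the pair (constant path at $\gamma(0)$, $\gamma$). Applying Lemma \ref{lem:barpath} to each factor separately (using that $\int$ is a coalgebra map for the deconcatenation coproduct and that the counit $\epsilon$ computes the iterated integral along a constant path), the resulting form on $PM$ equals the iterated integral of $(\epsilon(x\alpha y)\,|\,\beta\,|\,z) \in B$, which matches precisely the $\mathrm{ev}_{t=0}$-contribution of the proposed top-horizontal formula. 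The $t=1$ case is entirely symmetric.

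\textbf{Fiber integration over $I$.} For the $dt$-component I fiber-integrate both compositions over $I$. On the chain side, by Fubini the fiber integral commutes with Chen's simplex integrations, so pulling back the iterated integral of $(x\,|\,\alpha_1\cdots\alpha_p\,|\,y\,|\,\beta_1\cdots\beta_q\,|\,z)$ along $s$ gives a form on $I \times \Delta^p \times \Delta^q \times PM$ in which the $\alpha_i$ are evaluated on the first half-path (parametrized by $\Delta^p$), the $\beta_j$ on the second (parametrized by $\Delta^q$), and $y$ at the splitting point $\gamma(t)$. Rescaling the half-intervals and combining the three parameter simplices with $I$ into a single ordered simplex $\Delta^{p+q+1}$ with $t$ as the extra marker between the $\alpha$- and $\beta$-blocks, the integration over $I$ produces exactly the iterated integral of the concatenated bar word $(x\,|\,\alpha_1,\dots,\alpha_p,y,\beta_1,\dots,\beta_q\,|\,z)$ on $PM$. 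Endpoints $x$ and $z$ are unaffected because the path endpoints $\gamma(0),\gamma(1)$ are fixed throughout the family $s(t,-)$.

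\textbf{Signs and the main obstacle.} The sign $(-1)^{|x\alpha y|}$ in the proposed $dt$-term is produced when one commutes the $dt$ past the Koszul-graded factors $x,\alpha_1,\dots,\alpha_p,y$ in order to extract it to the left. Verifying this sign carefully is the one genuinely tedious point of the argument: one must match the orientation convention used in Chen's fiber integral (in particular the ordering of $\Delta^{p+q+1}$ as a fibration over $I$) with the Koszul sign convention on tensor products implicit in the formula for $B \otimes_A B \to \mathcal I \otimes B$. Once this is reconciled, the three checks combine to give commutativity of the diagram.
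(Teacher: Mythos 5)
Your proof is correct and follows essentially the same strategy as the paper's: the paper also handles the $(1-t)$- and $t$-components by invoking Lemma \ref{lem:barpath} directly, and treats the $dt$-component via a commuting diagram identifying $\Delta^n \times \Delta^m \times I$ with the join $\Delta^n \star \Delta^m = \Delta^{n+m+1}$, which is precisely the simplex-combination argument you describe verbally. Your write-up makes explicit some steps (the Whitney decomposition, the restriction of the splitting family at $t=0,1$) that the paper leaves implicit, but the underlying argument is the same.
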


\begin{proof}
The formula for the first two components follows from Lemma \ref{lem:barpath}. For the third component we note that the following diagram commutes
$$
\begin{tikzcd}
\Delta^n \times PM \times \Delta^m \times PM \ar{r} & M \times M^n \times M \times M \times M^m \times M \\
\Delta^n \times \Delta^m \times I \times PM \ar{u} \ar{r} \ar{d}& M \times M^n \times M \times M^m \times M \ar{u} \ar{d}\\
\Delta^n \star \Delta^m \times PM \ar{r} & M \times M^{m+n+1} \times M.
\end{tikzcd}
$$
\end{proof}

Note that the original splitting map \eqref{equ:splittingmap_def}
$$
I \times LM \longrightarrow \Map(\bigcirc_2)
$$
is obtained from $I \times PM \to PM \times_M PM$ by pulling back along the diagonal $M \to M \times M$. Hence we obtain the following

\begin{Prop}
\label{prop:splitmap}
The following diagram commutes.
$$
\begin{tikzcd}
\Omega^\bullet(LM) & \arrow{l}{s^*} \Omega^\bullet(\Map(\bigcirc_2)) \oplus \Omega^\bullet(F)[1]& \arrow{l} \Omega^\bullet(\Map(\bigcirc_2)) \oplus \Omega^\bullet(\Map(8))[1] \\
B \otimes_{A^e} A \arrow{u}& \arrow{u} \arrow{l} (B \otimes_A B) \otimes_{A^e} A \ \oplus \ ((B \overset{A}{\oplus} B) \otimes_{A^{\otimes 4}} A)[1]& \arrow{u} \arrow{l} (B \otimes_A B) \otimes_{A^e} A \ \oplus \ ((B \otimes B) \otimes_{A^{\otimes 4}} A)[1]
\end{tikzcd}
$$
\end{Prop}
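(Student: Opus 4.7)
The plan is to deduce this statement from the previous proposition by pulling back along the diagonal $M \to M \times M$, and then unpacking the information contained in the Whitney form decomposition $\mathcal{I} = \R(1-t) \oplus \R t \oplus \R dt$.

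First I would observe that the geometric splitting map $s \colon I \times LM \to \Map(\bigcirc_2)$ is literally the pullback of the map $I \times PM \to PM \times_M PM$ from the previous subsection along the diagonal $\Delta \colon M \to M\times M$, using the identifications $LM = PM \times_{M\times M} M$ and $\Map(\bigcirc_2) = (PM \times_M PM) \times_{M \times M} M$. Algebraically, this pullback corresponds to the functor $(-) \otimes_{A^e} A$. Applying this functor to the commutative square of the previous proposition and pairing with the iterated integral maps of Lemma~\ref{lem:barpath} yields the naive commuting square
\[
\begin{tikzcd}
\mathcal{I} \otimes (B \otimes_{A^e} A) \ar[d] & \ar[l] \ar[d] (B \otimes_A B) \otimes_{A^e} A \\
\mathcal{I} \otimes \Omega^\bullet(LM) & \ar[l] \Omega^\bullet(\Map(\bigcirc_2)),
\end{tikzcd}
\]
which is the non-relative precursor of the first square of the proposition.

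The next step is to translate this into a statement about the relative cochains appearing in the proposition. The key point is that $s$ sends $\partial I \times LM$ into $F = LM \coprod_M LM$; under the Whitney decomposition, the summand $\R(1-t)$ records restriction to $\{0\}$ (landing in one copy of $LM \subset F$ with the first ear constant), the summand $\R t$ records restriction to $\{1\}$ (landing in the other copy), and $\R dt$ records fiber integration over the interior. The first two summands, after applying $-\otimes_{A^e} A$, produce precisely the $(B \oplus_A B)\otimes_{A^{\otimes 4}}A$ piece modeling $\Omega^\bullet(F)$, since the two $A$-copies (one at each endpoint of $I$) now correspond to the constant-loop $M \subset LM$ on either ear and are identified under the pushout defining $F$. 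The $dt$ summand contributes the $\Omega^\bullet(\Map(\bigcirc_2))$ component and accounts for the $[1]$-shift from integration. Reading the formula of the previous proposition through this decomposition yields exactly the stated map from the mapping cone model of $\Omega^\bullet(\Map(\bigcirc_2), F)$ to $\Omega^\bullet(LM)$ realizing $s^\ast$.

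For the second square, I would argue by naturality using the inclusion of pairs $(\Map(\bigcirc_2), \Map(8)) \supset (\Map(\bigcirc_2), F)$. On the algebraic side, the inclusion $F \hookrightarrow \Map(8)$ corresponds to the evident surjection $(B \otimes B) \otimes_{A^{\otimes 4}} A \twoheadrightarrow (B\oplus_A B) \otimes_{A^{\otimes 4}} A$ which identifies the two $A$-components (the four endpoints collapse to a single basepoint in $\Map(8)$, while in $F$ one ear is forced constant). Compatibility of the vertical iterated integral maps with this identification follows from the constant-loop compatibility square of Lemma~\ref{lem:barpath}.

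The main obstacle I foresee is purely bookkeeping: carefully matching the three Whitney components with the three pieces of the mapping-cone model, tracking the $A^e$ versus $A^{\otimes 4}$ tensor structures coming from the different numbers of path-endpoints involved, and verifying that the signs produced by the previous proposition's formula are consistent with the cone differentials. No new geometric input is required beyond the previous proposition and the pullback description of $s$.
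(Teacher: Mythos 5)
Your proposal takes exactly the paper's route: the paper presents Proposition \ref{prop:splitmap} as an immediate consequence of the previous proposition, citing only that $s$ is the pullback of $I\times PM\to PM\times_M PM$ along the diagonal $M\to M\times M$, which is precisely your first step. You merely elaborate the bookkeeping (matching the Whitney summands to the cone components and to the boundary identifications defining $F$) that the paper leaves implicit, and this elaboration is correct.
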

where the map on the lower left row is given by
\begin{equation}\label{equ:splitmap cochains}
\begin{aligned}
    B \otimes_{A^e} A  &\longleftarrow (B \otimes_A B) \otimes_{A^e} A \quad \oplus \quad ((B \overset{A}{\oplus} B) \otimes_{A^{\otimes 4}} A)[1] \\
    \pm (\alpha x \beta | y) &\longmapsfrom ((\alpha | x | \beta | y), 0) \\
    (\alpha |x ) - (\beta | x) &\longmapsfrom (0, (\alpha \otimes 1 + 1 \otimes \beta | x))
\end{aligned}
\end{equation}
and on the lower right is the natural projection $B \otimes B \to (B \otimes A) \oplus (A \otimes B) \to B \overset{A}{\oplus} B$

\begin{Rem}
The vertical maps are quasi-isomorphisms in the simply-connected case.
\end{Rem}

\subsection{Connes differential}\label{sec:Connesdiff}
The action of $S^1$ on $LM = PM \times_{M \times M} M$ is induced by the splitting map $s$ as follows.
$$
\begin{tikzcd}
I \times PM \times_{M \times M} M \ar[r, "s"] & ( PM \times_M PM) \times_{M \times M} M \ar[r, "m^\text{op}"] & PM \times_{M \times M} M, 
\end{tikzcd}
$$
where $m^\text{op}$ is concatenation in the opposite order. That is rotating a loop is the same as splitting it at every point and concatenating the two pieces in the opposite order. Under the map $B \otimes_{A^e} A \to \Omega^\bullet(LM)$ the action of the fundamental class of the circle is then computed as
$$
\begin{tikzcd}
B \otimes_{A^e} A \ar[r, "\Delta^\text{op}"] & (B \otimes_A B) \otimes_{A^e} A \ar[r] & B \otimes_{A^e} A \\
(\alpha | x)  \ar[r, mapsto] & \pm (\alpha''|x|\alpha'|1) \ar[r, mapsto] & \pm (\alpha'' x \alpha' | 1),
\end{tikzcd}
$$
which is the standard formula for Connes' B-operator.

\subsection{Space of figure eights}
\label{sec:figeight}
In the definition of the loop product we also need the map 
$$
\Map(8) \to LM
$$
that concatenates the two loops. Since this is induced by the map $PM \times_M PM \to PM$ which is modelled by the deconcatenation coproduct on $B$, we get that the map $\Map(8) \to LM$ is modelled by
\begin{align}
\label{equ:concatloop}
\begin{split}
(B \otimes_{A^e} A)  &\longrightarrow (B \otimes B) \otimes_{A^{\otimes 4}} A \\
(\alpha|x) &\longmapsto (\alpha' \otimes \alpha'' | x).
\end{split}
\end{align}

To get the usual coproduct we have to compose with the map
$$
\Map(8) \to LM \times LM,
$$
that reads off each ear separately. This is just $M \to M \times M$ fiber product with $LM \times LM$ and hence modelled by
\begin{align}
\label{equ:splitloop}
\begin{split}
(B \otimes_{A^e} A) \otimes (B \otimes_{A^e} A)  &\longrightarrow (B \otimes B) \otimes_{A^{\otimes 4}} A \\
(\alpha|x) \otimes (\beta|y) &\longmapsto \pm (\alpha \otimes \beta| xy).
\end{split}
\end{align}

\subsection{$S^1$-equivariant loops}
It has been shown by Jones \cite{Jones} that the $S^1$-equivariant cohomology of the loop space of a simply connected manifold can be computed via the negative cyclic homology of a dgca model $A$ of the manifold,
\[
HC^-_{-\bullet}(A)\xrightarrow{\cong} H^\bullet_{S^1}(LM).
\]
More precisely, let $(B \otimes_{A^e} A [[u]], d + uB)$ be the negative cyclic complex, where $B$ is Connes operator. Then there is a map $(B \otimes_{A^e} A [[u]], d + uB) \to \Omega^\bullet(LM \times_{S^1} ES^1)$ (for the standard bar resolution $LM \times_{S^1} ES^1$)  that induces an isomorphism on cohomology in the case where $M$ is simply connected. Moreover, the Gysin sequence induced by the fiber sequence $S^1 \to LM \to LM_{S^1}$ is modelled by
$$
\begin{tikzcd}
B \otimes_{A^e} A \ar[r, "B"] \ar[d] & B \otimes_{A^e} A[[u]] \ar[r, "\cdot u"] \ar[d] & B \otimes_{A^e} A[[u]] \ar[r, "u = 0"] \ar[d] & B \otimes_{A^e} A  \ar[d] \\
\Omega^{\bullet-1}(LM) \ar[r, "\pi_!"] & \Omega^\bullet(LM_{S^1}) \ar[r] & \Omega^{\bullet+2}(LM_{S^1}) \ar[r, "\pi^*"] & \Omega^{\bullet+2}(LM),
\end{tikzcd}
$$
(see for instance \cite[Theorem 4.3]{Loday2}).
Let us moreover recall the following result
\begin{Prop}\label{prop:cycqiso}
Let $A$ be a connected graded algebra. Then
$$
B \otimes_{A^e} A \oplus \R[[u]] \overset{B}{\longrightarrow} (B \otimes_{A^e} A [[u]], d + uB)$$
factors through cyclic coinvariants $B \otimes_{A^e} A \to \Cyc(\bar{A})$ and induces a quasi-isomorphism
$$
\bCyc(\bar{A}) \oplus \R[[u]] = \Cyc(\bar{A}) \oplus u\R[[u]] \overset{B}{\longrightarrow} (B \otimes_{A^e} A [[u]], d + uB).
$$
\end{Prop}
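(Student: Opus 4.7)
The plan is to first verify that the asserted map $\phi$ is well-defined and a chain map, and then to establish the quasi-isomorphism via a $u$-adic spectral sequence comparison.

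For well-definedness, I would observe that the formula \eqref{equ:connesB} for Connes' $B$ is manifestly invariant under cyclic permutation of $(a_0,\dots,a_n)$, so it descends from $\bar C(A) := B \otimes_{A^e} A$ to cyclic coinvariants $\Cyc(\bar A)$. Combined with the inclusion $\R[[u]] \hookrightarrow \bar C(A)[[u]]$ sending $u^k \mapsto u^k \cdot 1 \in A$ in the length-$0$ component, this produces the map $\phi$ of the statement. The chain map property then reduces to the standard identities $B^2 = 0$ and $[d_H, B] = 0$ on $\bar C(A)$ (see \cite[\S 2.1.7]{Loday}), together with $d_H(1) = B(1) = 0$.

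For the quasi-isomorphism claim, I would filter the target by $F^p = u^p \bar C(A)[[u]]$, and filter the source by placing $\bCyc(\bar A)$ in filtration degree zero and $\R[[u]]$ by powers of $u$. Both filtrations are complete and Hausdorff, so the associated spectral sequences converge strongly, and $\phi$ is filtered. On the $E_1$-page the target becomes $HH_\bullet(A)[[u]]$ with $d_1 = uB$, while the source becomes the reduced cyclic homology $\overline{HC}_\bullet(A)$ in $u^0$-degree together with $\R$ in each positive $u$-degree, with $d_1 = 0$. The task is then to identify the $E_2$-page of the target column-by-column with that of the source.

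I expect the main technical obstacle to be precisely this $E_2$-comparison, and specifically the careful bookkeeping of the copy of $\R[[u]]$ arising from the unit $1 \in A$. For $A$ connected, augmented, and in characteristic zero this identification follows from the classical $(b,B)$-bicomplex description of cyclic homology and Connes' SBI exact sequence, see for instance \cite[Ch.\ 2]{Loday}. A safer alternative that avoids fine spectral-sequence accounting is to use SBI directly to produce a long exact sequence comparing the cohomologies of source and target, and then to verify that $\phi$ realizes the connecting maps, allowing one to conclude by the five lemma; in either approach, connectedness of $A$ (which forces $\bar A$ to be positively graded) is essential for the convergence estimates that make the $u$-adic arguments work out.
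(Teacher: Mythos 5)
Your strategy (well-definedness check, then a $u$-adic filtration and spectral-sequence comparison) is a legitimate and more hands-on route than the one the paper takes: the paper's proof is a one-line appeal to Goodwillie's theorem that periodic cyclic homology of a connected graded algebra equals that of a point, after which the SBI sequence identifies $HC^-$ with $HC$ plus a copy of $\R[[u]]$, and Connes' normalization identifies $HC$ with the (reduced) cyclic word complex. Your spectral-sequence setup is correct: both filtrations are complete and exhaustive (this is exactly where connectedness of $A$, hence positivity of $\bar A$, enters for convergence), the map is filtered, the source degenerates at $E_1$, and an isomorphism on $E_2$ would therefore suffice.

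The gap is precisely at the step you flag as the "main technical obstacle." On the target, $E_2^0 = \ker(B\colon HH_\bullet \to HH_{\bullet})$ and $E_2^p = \ker B/\operatorname{im}B$ for $p\geq 1$; you need $\ker B/\operatorname{im}B = \R\cdot[1]$ and $H(\bCyc\bar A)\xrightarrow{B}\ker B$ an isomorphism. The second is indeed Connes' normalization theorem in characteristic zero, but the first is \emph{not} a formal consequence of the $(b,B)$-bicomplex or of the SBI sequence alone, which is what your citation of \cite[Ch.~2]{Loday} suggests. It is exactly the content of Goodwillie's theorem \cite{Goodwillie} (triviality of higher periodic cyclic homology for pro-nilpotent, hence connected graded, algebras), which is what the paper's proof invokes. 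Your alternative "SBI plus five lemma" route has the same issue: the long exact sequence by itself does not decide the extension, and one needs the $HP$-vanishing input to split it. So the proof is salvageable and indeed essentially parallel to the paper's, but you should replace the vague appeal to "classical bicomplex facts" by the specific theorem of Goodwillie that makes $E_2^{\geq 1}$ collapse to $\R$.
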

\begin{proof}
This can for instance be extracted from \cite{Goodwillie} who shows that periodic cyclic homology is that of a point in this case and hence negative cyclic is essentially cyclic homology which can be computed in terms of cyclic words (see for instance \cite{Loday}).
\end{proof}
Under this quasi-isomorphism we get the following descriptions for $\pi_!$ and $\pi^*$ of section \ref{sec:stringbracketdef}.
\begin{Lem}
\label{lem:equivtohoch}
The following diagrams commute
$$
\begin{tikzcd}
B \otimes_{A^e} A \ar[r, "pr"] \ar[d] & \Cyc(\bar{A}) \oplus u\R[[u]] \ar[d] \\
\Omega^{\bullet}(LM) \ar[r, "\pi_!"] & \Omega^{\bullet+1}(LM_{S^1})
\end{tikzcd}
\quad
\begin{tikzcd}
\Cyc(\bar{A}) \oplus u\R[[u]] \ar[r, "\iota"] \ar[d] & B \otimes_{A^e} A \ar[d] \\
\Omega^\bullet(LM_{S^1}) \ar[r, "\pi^*"] & \Omega^{\bullet}(LM),
\end{tikzcd}
$$
where $pr : B \otimes_{A^e}A \to \Cyc(\bar{A})$ is the natural projection (where the element $1$ is sent to the empty cyclic word), and $\iota: \Cyc(\bar{A}) \to $ sends a cyclic word $x_1\dots x_n \mapsto \sum_i \pm (x_{1+i}x_{2+i}\ldots x_{n+i} | 1)$.
\end{Lem}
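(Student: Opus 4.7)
The plan is to combine two ingredients already laid out in Section~\ref{sec:cochain models}: first, the Gysin-sequence description in the paragraph immediately preceding Proposition~\ref{prop:cycqiso}, which models the map $\pi_!$ by the Connes operator $B\colon B\otimes_{A^e}A\to B\otimes_{A^e}A[[u]]$ and the map $\pi^*$ by the ``set $u=0$'' quotient, both after passage to iterated integrals; and second, Proposition~\ref{prop:cycqiso} itself, which furnishes a quasi-isomorphism
\[
\Phi\colon \Cyc(\bar A)\oplus u\R[[u]]\xrightarrow{\simeq}(B\otimes_{A^e}A[[u]],d+uB)
\]
together with the factorization $B=\Phi\circ pr$ of the Connes operator through cyclic coinvariants. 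Observe that the right-hand vertical arrow in both diagrams of the lemma is, by construction, the iterated integral on the negative cyclic complex precomposed with $\Phi$; once this is acknowledged, both commutativity statements should reduce to diagram chases at the chain level.

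For the first diagram, I would take $c\in B\otimes_{A^e}A$ and apply the Gysin model to get $\pi_!\circ\!\int(c)=\int\!\circ\,B(c)$, then use the factorization from Proposition~\ref{prop:cycqiso} to rewrite the right-hand side as $\int\!\circ\,\Phi\circ pr(c)$. By the definition of the right-hand vertical arrow, this is precisely the image of $pr(c)$ under that arrow, giving the desired commutativity.

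For the second diagram, I would reduce the commutativity to the chain-level identity $(u=0)\circ\Phi=\iota$. On a cyclic word $(x_1,\dots,x_n)\in\Cyc(\bar A)$, the map $\Phi$ acts by the Connes symmetrization formula $\sum_i\pm(x_{1+i}\dots x_{n+i}|1)$ recalled in Section~\ref{sec:intro hochschild}, which already lies in the $u^0$-summand; $(u=0)$ therefore preserves it, and the result matches $\iota$ by definition. On the summand $u\R[[u]]$, the image of $\Phi$ has the form $u^k\cdot 1$ with $k\ge 1$ and is killed by $u=0$, consistent with $\iota=0$ there. Combined with the Gysin description of $\pi^*$, the second diagram then commutes.

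The expected main obstacle is essentially clerical: the rewriting $\bCyc(\bar A)\oplus\R[[u]]=\Cyc(\bar A)\oplus u\R[[u]]$ used in Proposition~\ref{prop:cycqiso} identifies the empty cyclic word with $1\in\R[[u]]$, and one has to check that the maps $pr$ and $\iota$ defined in the lemma (the former sending $1\in B\otimes_{A^e}A$ to the empty cyclic word, the latter vanishing on $u\R[[u]]$) are compatible with this identification. Beyond this minor bookkeeping and the standard signs coming from the various degree shifts built into the cyclic complex, no substantive difficulty is anticipated.
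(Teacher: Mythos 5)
The paper does not supply a proof of this lemma; it is stated as an immediate consequence of the two preceding ingredients (the Gysin-diagram identification of $\pi_!$ with the Connes operator $B$ and of $\pi^*$ with evaluation at $u=0$, together with Proposition~\ref{prop:cycqiso}), and your argument assembles exactly these pieces in the expected way. Two small notational points: your claim about ``the right-hand vertical arrow in both diagrams'' should refer to the arrow out of $\Cyc(\bar A)\oplus u\R[[u]]$, which sits on the right in the first diagram but on the left in the second; and for the second diagram you should also note that $(u=0)\circ\Phi$ on the empty cyclic word (identified with $1\in\R[[u]]$) gives the unit $1\in B\otimes_{A^e}A$, consistent with $\pi^*$ being unital, while $u\R[[u]]$ goes to zero as you say. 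With those bookkeeping remarks, the argument is correct.
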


We also consider the following variant of this. One has the reduced equivariant cohomology $\bar H_{S^1}(LM)$, which fits into a split short exact sequence
\[
0 \to  \bar H_{S^1}(LM) \to H^\bullet_{S^1}(LM) \to H^\bullet_{S^1}(*) \to 0.
\]

By naturality of the above construction we get a natural map
\[
\bCyc(\bar A) \to \bar H_{S^1}(LM),
\]
from the reduced cyclic complex, which is a quasi-isomorphism for $M$ simply-connected (see also \cite{ChenEshmatovGan}).

\section{Cochain zigzags for the string product and bracket}\label{sec:cochain zigzags}
In this section we shall use the cochain models of section \ref{sec:cochain models} to obtain explicit descriptions of the string product and coproduct, as defined in section \ref{sec:stringtopology}.

\subsection{Product}
The zigzag \eqref{equ:product_zigzag} defining the string product on cohomology is realized on cochains by the zigzag
\[
\begin{tikzcd}
\Omega^\bullet(LM\times LM) & \ar{l} \Tot\left( \Omega^\bullet(LM\times LM) \to \Omega^\bullet(LM\times' LM) \right) \ar{d}{\simeq} & \\
& 
\Tot\left( \Omega^\bullet(\Map(8)) \to \Omega^\bullet(\Map'(8)) \right)
& \ar{l} \Omega^{\bullet -n}(\Map(8))
& \ar{l} \Omega^{\bullet -n}(LM)
\end{tikzcd}
\]
We first rewrite this to the zigzag
\[
\begin{tikzcd}
\Omega^\bullet(LM)\otimes \Omega^\bullet(LM) & \ar{l} \Tot\left( \Omega^\bullet(LM)\otimes \Omega^\bullet(LM) \to (\Omega^\bullet(LM)\otimes \Omega^\bullet(LM))\otimes_{A^{\otimes 2}} \Omega^\bullet(\FM_M(2)) \right) \ar{d}{\simeq} & \\
& 
\Tot\left( \Omega^\bullet(LM)\otimes_A \Omega^\bullet(LM) \to \Omega^\bullet(LM) \otimes_A \Omega^\bullet(LM) \otimes_A \otimes \Omega^\bullet(UTM) \right)
& \ar{l} \Omega^\bullet(LM) \otimes_A \Omega^\bullet(LM)[-n]
&  \\
& &  \ar{u} \Omega^\bullet(LM)[-n],
\end{tikzcd}
\]
which clearly comes with a map to the original zigzag.

Now suppose we have a model for the boundary inclusion $UTM\to \FM_M(2)$ compatible with the maps to $M \times M$, i.e., we have a commutative diagram of $A$-bimodules
\[
\begin{tikzcd}
C \ar{r} \ar{d}{\simeq} & U \ar{d}{\simeq} \\
\Omega^\bullet(\FM_M(2)) \ar{r} & \Omega^\bullet(UTM). 
\end{tikzcd}
\]
Then, using the models of the previous section, we can rewrite the diagram again to 
\[
\begin{tikzcd}
B\otimes_{A^2} A \otimes B\otimes_{A^2} A  
& \ar{l} \Tot\left(B\otimes_{A^2} A \otimes B\otimes_{A^2} A
\to (B\otimes B)\otimes_{A^{\otimes 4}} C \right) \ar{d}{\simeq} & \\
& 
\left( B\otimes_{A^4} B \to B \otimes_{A^2} B \otimes_{A^2}U \right)
& \ar{l}{\wedge Th} B\otimes_{A^4} B[-n]
& \ar{l} B\otimes_{A^2} A [-n] \, ,
\end{tikzcd}
\]
and again we retain a map to the original diagram.
Now the left- and right-hand end of the diagram are Hochschild complexes.
We can hence compute the string product on the Hochschild homology $HH(A,A)$ by starting with a cocycle on the right and tracing its image through the zigzag.
The main difficulty here is however crossing the vertical map.
We shall hence further simplify the zigzag slightly.

To this end note that our zigzag is obtained from the zigzag of $A$-bimodules 
\begin{equation}\label{equ:prod_mod_zigzag}
\begin{tikzcd}
A \otimes A   
& \ar{l} A\otimes A/C
\ar{d}{\simeq} & \\
& 
A/U
& \ar{l}{\wedge Th} A[-n]
\end{tikzcd}
\end{equation}

by tensoring with $B^{\otimes 2}$ over $A^{\otimes 4}$.

We will then use the following result, which is obvious, but nevertheless stated for comparison with the later Lemma \ref{lem:hocop}.
\begin{Lem}\label{lem:hoprod}
Suppose that $QA\to A$ is a cofibrant resolution of $A$ as a bimodule and 
\[
\begin{tikzcd}
QA[d] \ar{d}\ar{r}{g} & A\otimes A/C \ar{d} \\
A[-n] \ar{r}{\wedge Th} \ar[Rightarrow, ur, "h"] & A/U
\end{tikzcd}
\]
is a homotopy commutative square. Then the zigzag 
\[
A\otimes A/C \rightarrow A/U \xleftarrow{\wedge Th} A[-n]
\]
is homotopic to the zigzag
\[
A\otimes A/C \xleftarrow{g} QA[-n]\to A[-n]  \, .
\]
\end{Lem}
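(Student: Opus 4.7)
The plan is to interpret both zigzags as roofs representing morphisms $A[-n]\to A\otimes A/C$ in the derived category of $A$-bimodule complexes. In the first zigzag $A\otimes A/C \to A/U \xleftarrow{\wedge Th} A[-n]$, the quasi-isomorphism to be inverted is $A\otimes A/C \xrightarrow{\simeq} A/U$ from \eqref{equ:prod_mod_zigzag}; in the second zigzag $A\otimes A/C \xleftarrow{g} QA[-n] \to A[-n]$, it is the resolution $QA[-n]\xrightarrow{\simeq}A[-n]$. Both roofs thus give well-defined morphisms in the derived category.

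To show they coincide, I would postcompose both roofs with the quasi-isomorphism $A\otimes A/C \to A/U$; this operation is injective on derived Hom-sets out of $A[-n]$. The first roof then collapses to the strict chain map $\wedge Th\colon A[-n] \to A/U$, while the second collapses to the roof $A[-n] \xleftarrow{\simeq} QA[-n] \xrightarrow{g} A\otimes A/C \to A/U$. The given datum $h$ is precisely a chain homotopy between $g$ postcomposed with $A\otimes A/C \to A/U$ and the composite $QA[-n]\to A[-n]\xrightarrow{\wedge Th} A/U$. The two postcomposed roofs therefore agree in the derived category, and by injectivity the original zigzags coincide there as well.

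The proof is essentially formal; the only point that really uses the hypothesis on $QA$ is that cofibrancy allows $h$ to be interpreted as a bona fide chain homotopy between maps out of $QA[-n]$, and that postcomposition with the quasi-isomorphism $A\otimes A/C\to A/U$ is injective on $\mathrm{Hom}(QA[-n],-)$ in the homotopy category. Once these are granted, $h$ itself provides all the chain-level data needed, and no further obstacle arises. The statement is in fact the "trivial" half of the analogy with the more substantial Lemma \ref{lem:hocop} that it is stated to parallel, where the additional complications of the coproduct case demand a more careful argument.
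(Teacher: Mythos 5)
The paper offers no proof for this Lemma, remarking only that it is ``obvious, but nevertheless stated for comparison with the later Lemma \ref{lem:hocop}.'' Your derived-category argument is correct: both zigzags are roofs for a morphism $A[-n]\to A\otimes A/C$ in the derived category of $A$-bimodules, postcomposition with the quasi-isomorphism $A\otimes A/C\to A/U$ is a bijection on derived Hom-sets, and after that postcomposition the first roof is literally $\wedge Th$ while the second roof represents $\wedge Th$ because $h$ is exactly the chain homotopy between $\pi'\circ g$ and $(\wedge Th)\circ(QA[-n]\to A[-n])$; cofibrancy of $QA[-n]$ is used so that ``homotopic'' is the right notion of equality in $\mathrm{Hom}(QA[-n],-)$. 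This is precisely the content the authors are sweeping under ``obvious.''

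One small remark: the proof of the companion Lemma \ref{lem:hocop} in the paper proceeds differently, by exhibiting an \emph{explicit} chain homotopy on the total complexes, namely the block matrix $H = \begin{pmatrix} h & 0 \\ 0 & 0 \end{pmatrix}$. The same direct matrix computation would prove Lemma \ref{lem:hoprod} as well (in fact with fewer terms, since there is no lower row to track), and yields a concrete chain-level homotopy rather than just an equality in the derived category. Your abstract argument is cleaner and sufficient for the stated conclusion, but the explicit homotopy is what the paper ultimately needs in order to trace cochains through the zigzags, so the computational proof is the one that actually ``scales up'' to the coproduct case. You correctly flag this asymmetry at the end of your write-up. (Also note the typo in the Lemma's statement: $QA[d]$ in the square should read $QA[-n]$, consistent with the zigzags.)
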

Note that eventually we are interested in the composition \[
A[-n]\leftarrow QA[-n] \rightarrow A\otimes A/C \to A\otimes A,
\]
which one may interpret as a (derived) coproduct.

\subsection{Coproduct (relative to constant loops case)}\label{sec:coprodrel}
Taking differential forms of the right part of the diagram \eqref{eqn:defredcop} we obtain
$$
\begin{tikzcd}
\Tot( \Omega^\bullet(E) \to \Omega^\bullet(F)) & \ar{l} \Tot\left(    \begin{tikzpicture}
        \node(1) at (0,.5) {$\Omega^\bullet(E)$};
        \node(2) at (2.2,.5) {$\Omega^\bullet(F)$};
        \node(3) at (0,-.5) {$\Omega^\bullet(E|_{\FM_M(2)})$};
        \node(4) at (2.2,-.5) {$\Omega^\bullet(F|_{UTM})$};
        \draw[->](1)-- (2);
        \draw[->](3)-- (4);
        \draw[->](1)-- (3);
        \draw[->](2)-- (4);
    \end{tikzpicture} \right) \ar{d}{\simeq}
&  \\
& \Tot\left(    \begin{tikzpicture}
        \node(1) at (0,.5) {$\Omega^\bullet(E|_M)$};
        \node(2) at (2.2,.5) {$\Omega^\bullet(F)$};
        \node(3) at (0,-.5) {$\Omega^\bullet(E|_{UTM})$};
        \node(4) at (2.2,-.5) {$\Omega^\bullet(F|_{UTM})$};
        \draw[->](1)-- (2);
        \draw[->](3)-- (4);
        \draw[->](1)-- (3);
        \draw[->](2)-- (4);
    \end{tikzpicture} \right) & \ar{l}{\wedge Th} \Tot( \Omega^\bullet(E|_M) \to \Omega^\bullet(F))[d]
\end{tikzcd}
$$
Here $\Tot(\cdots)$ refers to the total complex of the diagram, for example $\Tot( \Omega^\bullet(E) \to \Omega^\bullet(F))$ is the mapping cone of the map of complexes $\Omega^\bullet(E) \to \Omega^\bullet(F)$.
By replacing fiber products with tensor product we obtain a map from the following diagram into the above diagram (that is a quasi-isomorphism in the simply connected situation at each step).
\begin{equation}
\label{equ:zigzag_coprod2}
\begin{tikzcd}
\Tot( \Omega^\bullet(E) \to \Omega^\bullet(F)) & \ar{l}
\Tot\left(    \begin{tikzpicture}
        \node(1) at (0,.5) {$\Omega^\bullet(E)$};
        \node(2) at (3.5,.5) {$\Omega^\bullet(F)$};
        \node(3) at (0,-.5) {$\Omega^\bullet(E) \otimes_{A^{\otimes 2}} \Omega^\bullet(\FM_M(2))$};
        \node(4) at (3.5,-.5) {$\Omega^\bullet(F) \otimes_A \Omega^\bullet(UTM)$};
        \draw[->](1)-- (2);
        \draw[->](3)-- (4);
        \draw[->](1)-- (3);
        \draw[->](2)-- (4);
    \end{tikzpicture} \right) \ar{d}{\simeq}
&  \\
&\Tot\left(  \left(\Omega^\bullet(E)\otimes_{A^{\otimes 2}} A \to \Omega^\bullet(F) \right) \otimes_A \left(\begin{tikzpicture}
        \node(1) at (0,.5) {$A$};
        \node(3) at (0,-.5) {$\Omega^\bullet(UTM)$};
        \draw[->](1)-- (3);
    \end{tikzpicture} \right) \right) & \ar{l}{\wedge Th} \Tot\left(\Omega^\bullet(E)\otimes_{A^{\otimes 2}} A \to \Omega^\bullet(F)[n]\right)
\end{tikzcd}
\end{equation}
This diagram is obtained from
\begin{equation}
\label{diag:mor}
\begin{tikzcd}
A[-n] \ar[d, equal] \ar{r}{\wedge Th} & \ar[d, equal] A / \Omega^\bullet(UTM) & \ar{d} \ar{l}{\simeq} (A \otimes A) / \Omega^\bullet(\FM_M(2))  \ar{r} & A \otimes A \ar[d]\\
A[-n] \ar{r}{\wedge Th} & A / \Omega^\bullet(UTM) & \ar[l,equal] A / \Omega^\bullet(UTM) \ar{r} & A
\end{tikzcd}
\end{equation}
by tensoring the first line with $\Omega^\bullet(E)$ over $A^{\otimes 4}$ and the second line with $\Omega^\bullet(F)$ over $A^{\otimes 2}$. Here we again wrote (co)cones as quotients. In particular, if we wish to invert the quasi-isomorphism $A / \Omega^\bullet(UTM) \overset{\simeq}{\longrightarrow} (A \otimes A) / \Omega^\bullet(\FM_M(2))$ we have to do so as $A^{\otimes 4}$ module and compatible with the projection to $A / \Omega^\bullet(UTM)$. Since all our $A$-module structures come in pairs that factor through a single $A$-module structure it is enough to talk about $A$-bimodules and $A$-modules.

Let us summarize the above situation as follows. Let us define the category $\MAAA$ with objects pairs $(M \to N)$ of an $A$-bimodule $M$ and an $A$-module $N$, together with an $A$-bimodule map between them. Morphisms are homotopy commuting squares, i.e., squares
\begin{equation}\label{equ:MAAAmorph}
\begin{tikzcd}
M \ar{r}{f} \ar{d}{\pi} & M' \ar{d}{\pi'}\\
N \ar[Rightarrow, ur, "h"] \ar{r}{g}& N' 
\end{tikzcd},
\end{equation}
where $f$ is a map of $A$-bimodules, $g$ is a map of $A$-modules and $h:M\to N'[1]$ is a map of bimodules, such that $d_{N'}h+hd_M=\pi'f-g\pi$.

We define a functor from $\MAAA$ into dg vector spaces
\[
T_{F,E} : \MAAA \to \dgVect
\]
that sends an object $(M \overset{f}{\to} N)$ to the complex
$$
\Omega^\bullet(E) \otimes_{A^{\otimes 2}} M \oplus \Omega^\bullet(F) \otimes_A N [1],
$$
with differential of the form
$$
d = \begin{pmatrix}
d & 0 \\
f & d
\end{pmatrix}
=
\begin{pmatrix}
d_{\Omega^\bullet(E)} \otimes 1 + 1 \otimes d_M & 0 \\
\iota_{\Omega^\bullet(E) \to \Omega^\bullet(F)} \otimes f & d_{\Omega^\bullet(F)} \otimes 1 + 1 \otimes d_N
\end{pmatrix}.
$$
This defines a functor. Concretely, to a morphism \eqref{equ:MAAAmorph} in $\MAAA$ we associate the morphism 
$$
\Omega^\bullet(E) \otimes_{A^{\otimes 2}} M \oplus \Omega^\bullet(F) \otimes_A N [1]
\to \Omega^\bullet(E) \otimes_{A^{\otimes 2}} M' \oplus \Omega^\bullet(F) \otimes_A N' [1]
$$
given by the matrix
$$
\begin{pmatrix}
\iid_\Omega^\bullet(E) \otimes f & 0\\
\iota_{\Omega^\bullet(E) \to \Omega^\bullet(F)}\otimes h & \iid_\Omega^\bullet(F) \otimes g 
\end{pmatrix}\, .
$$
We note in particular, that the homotopy in the morphism \eqref{equ:MAAAmorph} in $\MAAA$ is part of the data and appears non-trivially in the image under $T_{F,E}$.
The functor sends componentwise quasi-isomorphisms into quasi-isomorphisms (we note that $\Omega^\bullet(E)$ and $\Omega^\bullet(F)$ are cofibrant as $A^{\otimes 2}$ and $A$ modules, respectively). 
Our zigzag \eqref{equ:zigzag_coprod2} of quasi-isomorphisms in $\dgVect$ is obtained from the zigzag \eqref{diag:mor} in $\MAAA$ (with all homotopies $=0$) by applying the functor $T_{F,E}$.

The category $\MAAA$ can in fact be extended to a dg category and $T_{F,E}$ to a dg functor.
Hence one can talk about homotopic morphisms in $\MAAA$, and homotopic morphisms are send to homotopic morphisms between complexes by $T_{F,E}$. Also, one can consider zigzags of quasi-isomorphisms such as \eqref{diag:mor} as a morphism in the derived category.

Our goal is then to replace the zigzag \eqref{diag:mor} by a homotopic zigzag in $\MAAA$ that is computationally simpler.
To this end we will use later the following result.

\begin{Lem}
\label{lem:hocop}
Let for that purpose $QA \to A$ denote a cofibrant replacement of $A$ in $A^{\otimes 2}$-modules.
Let $g$ and $h$ be any $A$-bimodule maps forming a homotopy commuting square
\begin{equation}\label{equ:hocopsquare}
\begin{tikzcd}
QA[-n] \ar[d] \ar[r, "g"] & (A \otimes A) / \Omega^\bullet(\FM_M(2)) \ar[d]  \\
A[-n] \ar[r, "\wedge Th"] \ar[Rightarrow, ur, "h"] & A / \Omega^\bullet(UTM),
\end{tikzcd}
\end{equation}
then
$$
\begin{tikzcd}
A[-n] \ar[d, equal] & \ar[l] QA[-n] \ar[d] \ar[r, "g"] & (A \otimes A) / \Omega^\bullet(\FM_M(2)) \ar[d] \\
A[-n] & \ar[l, equal] A[-n] \ar[Rightarrow, ur, "h"] \ar[r, "\wedge Th"] & A / \Omega^\bullet(UTM),
\end{tikzcd}
$$
and
$$
\begin{tikzcd}
A[-n] \ar[d, equal] \ar{r}{\wedge Th} & \ar[d, equal] A / \Omega^\bullet(UTM) & \ar{d} \ar{l}{\simeq} (A \otimes A) / \Omega^\bullet(\FM_M(2))  \\
A[-n] \ar{r}{\wedge Th} & A / \Omega^\bullet(UTM) & \ar[l,equal] A / \Omega^\bullet(UTM)
\end{tikzcd}
$$
define homotopic morphisms.
\end{Lem}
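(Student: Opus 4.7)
The plan is to view both diagrams as two representations of one and the same derived morphism in $\MAAA$ from $(A[-n]\xrightarrow{\mathrm{id}} A[-n])$ to $((A\otimes A)/\Omega^\bullet(\FM_M(2))\xrightarrow{\pi} A/\Omega^\bullet(UTM))$, and to deduce their homotopy equivalence from a single homotopy-commutative square in $\MAAA$ unifying them.

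First, I would unpack the data. By the definition of the dg structure on $\MAAA$ discussed in the paragraph preceding the lemma, the hypothesis \eqref{equ:hocopsquare}---i.e.\ the existence of the bimodule chain homotopy $h$---is precisely what makes the triple $(g,\wedge Th, h)$ into a closed degree-$0$ morphism in $\MAAA$ from $(QA[-n]\to A[-n])$ to $((A\otimes A)/\Omega^\bullet(\FM_M(2))\to A/\Omega^\bullet(UTM))$. Moreover the cofibrant replacement $(QA\to A,\mathrm{id},0)$ and the projection $(\pi,\mathrm{id},0)$ are both quasi-isomorphisms in $\MAAA$, the latter using that $(A\otimes A)/\Omega^\bullet(\FM_M(2))\xrightarrow{\simeq} A/\Omega^\bullet(UTM)$ is a quasi-isomorphism.

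Next, I would fit these morphisms into the single square
$$
\begin{tikzcd}
(QA[-n]\to A[-n]) \ar{r}{(g,\wedge Th, h)} \ar{d}[swap]{\simeq} & ((A\otimes A)/\Omega^\bullet(\FM_M(2))\to A/\Omega^\bullet(UTM)) \ar{d}{\simeq} \\
(A[-n] \to A[-n]) \ar{r}{(\wedge Th,\wedge Th,0)} & (A/\Omega^\bullet(UTM) \to A/\Omega^\bullet(UTM))
\end{tikzcd}
$$
in $\MAAA$ and verify that it commutes up to a homotopy in the $\MAAA$-hom complex whose only nontrivial component is (up to sign) $h$ itself. The two composites around the square give morphisms $(QA[-n]\to A[-n])\to (A/\Omega^\bullet(UTM)\to A/\Omega^\bullet(UTM))$ whose difference lies in the first and third slots and is visibly a coboundary of a degree $-1$ element built from $h$. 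The first diagram of the lemma then arises by inverting the left vertical quasi-isomorphism---legitimate because $(QA[-n]\to A[-n])$ is cofibrant in $\MAAA$, so derived mappings out of it are computed strictly---while the second arises by inverting the right vertical one. Since both routes around a homotopy-commutative square descend to the same morphism in the derived category of $\MAAA$, the two zigzags represent homotopic morphisms there, and applying the dg functor $T_{F,E}$ yields the desired homotopy between the cochain maps they induce.

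The main obstacle I anticipate is the sign bookkeeping: writing down an explicit degree $-1$ element in the $\MAAA$-hom complex whose differential equals the difference of the two composites, with signs consistent with the conventions implicit in the definition of $\MAAA$. Once this explicit chain homotopy is in hand, both the homotopy commutativity of the big square and the conclusion of the lemma follow directly, and one obtains an explicit homotopy at the level of $T_{F,E}$-images that can be used later for concrete chain-level computations with the string coproduct.
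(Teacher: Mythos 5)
Your proposal is correct and follows essentially the same route as the paper. In both cases the two zigzags are compared by first pushing them to a common pair of strict $\MAAA$-morphisms $(QA[-n]\to A[-n])\to (A/\Omega^\bullet(UTM)\xrightarrow{\mathrm{id}} A/\Omega^\bullet(UTM))$ (the paper phrases this as ``it is enough to show that the composites are homotopic,'' you phrase it as a homotopy commutative square with vertical quasi-isomorphisms), and the homotopy between the two composites is built from $h$ alone. The one place the paper does more than you is precisely the step you flag as ``the main obstacle'': it exhibits the degree $-1$ element explicitly as $H=\begin{pmatrix}h&0\\0&0\end{pmatrix}$ acting on the relevant cones, and verifies $[d,H]$ equals the difference of the two composites by a short matrix computation that reduces to the hypothesis $[d,h]=mg-(\wedge Th)\pi$; filling in that computation would complete your plan.
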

\begin{proof}
It is enough to show that the composites 
$$
\begin{tikzcd}
QA[-n] \ar[d] \ar[r, "g"] & A \otimes A / \Omega^\bullet(\FM_M(2)) \ar[d] \ar{r}{\simeq}& \ar{d}  A / \Omega^\bullet(UTM) \\
A[-n] \ar[Rightarrow, ur, "h"] \ar[r, "\wedge Th"] & A / \Omega^\bullet(UTM)  & \ar[l,equal] A / \Omega^\bullet(UTM),
\end{tikzcd}
$$
and 
$$
\begin{tikzcd}
QA[-n] \ar[r]  \ar[d]&A[-n] \ar[d, equal] \ar{r}{\wedge Th} & \ar[d, equal] A / \Omega^\bullet(UTM)   \\
A[-n] \ar[r, equal] &A[-n] \ar{r}{\wedge Th} & A / \Omega^\bullet(UTM)
\end{tikzcd}
$$
are homotopic. A homotopy is given by $H := \begin{pmatrix}h & 0 \\ 0 & 0\end{pmatrix}$ as shown by the computation
\begin{align*}
[d,H] &= \begin{pmatrix} d & 0 \\ 1 & d \end{pmatrix} \begin{pmatrix}h & 0 \\ 0 & 0 \end{pmatrix} - \begin{pmatrix}h & 0 \\ 0 & 0 \end{pmatrix} \begin{pmatrix} d & 0 \\ \pi & d \end{pmatrix} \\
&= \begin{pmatrix} [d,h] & 0 \\ h & 0 \end{pmatrix}= \begin{pmatrix} mg-(\wedge Th) \pi & 0 \\ h & 0 \end{pmatrix} \\
&= \begin{pmatrix} mg & 0 \\ h & \wedge Th \end{pmatrix} - \begin{pmatrix} (\wedge Th) \pi & 0 \\ 0 & \wedge Th \end{pmatrix} 
\end{align*}
where $m$ denotes the map $A \otimes A / \Omega^\bullet(C_2(M)) \to A / \Omega^\bullet(UTM)$ and $\pi: QA \to A$ is the canonical projection and we used the assumption
$$
[d,h] = m g - (\wedge Th)\pi.
$$
\end{proof}

We remark that the final datum that enters \eqref{diag:mor} is the homotopy commuting square
$$
\begin{tikzcd}
QA[-n] \ar[d] \ar[r] & A \otimes A \ar[d]  \\
A[-n] \ar[Rightarrow, ur] \ar[r] & A,
\end{tikzcd}
$$
obtained by postcomposing \eqref{equ:hocopsquare} with the maps from the relative to the non-relative complexes. We note that the lower horizontal map is multiplication with the Euler element. The needed data is thus a bimodule map
$$
g: QA[-n] \to A \otimes A,
$$
such that
$$
QA[-n] \to A \otimes A \to A
$$
is homotopic (with a specified homotopy $h$) to a map that descends along $QA \to A$ to an $A$-module map $A[-n] \to A$.

\begin{Rem}
Note that for any cyclic $A_\infty$ (or right Calabi-Yau) algebra $A$ there is a canonical "coproduct" map $A \to A \otimes A$ in the derived category of $A$-bimodules from which one constructs an "Euler class" $A \to A$ by postcomposing with the multiplication. This is an element in Hochschild cohomology $HH^d(A,A)$. In our case, this element is in the image of the map $Z(A) \to HH^\bullet(A,A)$. In general, this gives an obstruction for $A$ to come from a manifold, which is an element in $HH^{d,\geq 1}(A,A)$. In case this element vanishes, choices of lifts are given by elements in $HH^{d-1}(A,A) = HH_1(A,A)^*$, so that in case $M$ is 2-connected, there is no room for additional data. We will see below that we can trivialize the element in $HH^{d,\geq 1}(A,A)$ using $Z_1$ thus reducing the space of additional data to $HC^-_1(A,A)^*$ which is zero for $M$ simply-connected.
\todo[inline]{ThomasX: Where exactly do we see that?}
\end{Rem}

\subsection{Coproduct ($\chi(M) = 0$ case)}\label{sec:chi 0 case}
In the 1-framed case, we recall that the coproduct is obtained by fiber product with the fibration $\Map(\bigcirc_2) = PM \otimes PM \to M \times M$ of the zig-zag
$$
\begin{tikzcd}
(M \times M, M) \\
\ar[u] (\FM_M(2), UTM) \ar[r] & \Sigma UTM = (pt, UTM)  \ar[r, dashed, "Th"] & M.
\end{tikzcd}
$$
and precomposing with the splitting map $(I, \partial I) \times LM \to (\Map(\bigcirc_2), \Map(8))$.
In this case the Thom form can be lifted to a closed element $Th \in \Omega^{n-1}(UTM)$. A cochain description can thus be obtained from the $A^{\otimes 2}$-bimodule map
$$
(A \otimes A)/A \longrightarrow \Omega^\bullet(\FM_M(2))/ \Omega^\bullet(UTM) \overset{\wedge Th}{\longleftarrow} A,
$$
by tensoring with $B \otimes B$ over $A^{\otimes 4}$. Thus to get a description, we merely need to find a homotopy inverse to the map $(A \otimes A)/A \leftarrow \Omega^\bullet(\FM_M(2))/ \Omega^\bullet(UTM)$ on the image of $\wedge Th$. More concretely, we have the following
\begin{Lem}
\label{lem:frcop}
Assume $\Psi : QA \to (A \otimes A)/A$ and a homotopy making the following diagram homotopy commute
$$
\begin{tikzcd}
QA \ar[r, "\Psi"] \ar[dr, "\wedge Th"', ""{name=Th}] & (A \otimes A) / A \ar[d] \arrow[Rightarrow, to=Th] \\
& \Omega^\bullet(\FM_M(2))/ \Omega^\bullet(UTM),
\end{tikzcd}
$$
in $A$-bimodules are given. Then the diagram
$$
\begin{tikzcd}
(B \otimes B) \otimes_{A^{\otimes 4}} (A \otimes A / A) \ar[d] & \ar[l, "\Psi"] (B \otimes B) \otimes_{A^{\otimes 4}} QA \ar[d] \ar[r, "\simeq"] & (B \otimes B) \otimes_{A^{\otimes 4}} A \ar[dl] \\
\Omega^\bullet(\Map(\bigcirc_2), \Map(8)) & \ar[l] \Omega^\bullet(\Map(8)))
\end{tikzcd}
$$
commutes.
\end{Lem}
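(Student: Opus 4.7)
The plan is to imitate the proof of Lemma~\ref{lem:hocop}. The key input is that the two-sided bar construction $B = B(A,A,A)$ is cofibrant as an $A$-bimodule, so $B \otimes B$ is cofibrant as an $A^{\otimes 4}$-module and the functor $(B \otimes B) \otimes_{A^{\otimes 4}}(-)$ preserves quasi-isomorphisms and sends a homotopy-commutative diagram in $A$-bimodules to a homotopy-commutative diagram of complexes, with the homotopy inherited by tensoring.

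Applying this functor to the hypothesis, we obtain a homotopy-commutative triangle
$$
\begin{tikzcd}
(B \otimes B) \otimes_{A^{\otimes 4}} QA \ar[r, "\Psi"] \ar[dr, "\wedge Th"'] & (B \otimes B) \otimes_{A^{\otimes 4}} \bigl((A \otimes A)/A\bigr) \ar[d] \\
& (B \otimes B) \otimes_{A^{\otimes 4}} \bigl(\Omega^\bullet(\FM_M(2))/\Omega^\bullet(UTM)\bigr)
\end{tikzcd}
$$
with explicit chain homotopy $\iid \otimes h$. Postcomposition with the iterated-integral comparison maps of Lemma~\ref{lem:barpath} and Section~\ref{sec:figeight} sends the top-right term to $\Omega^\bullet(\Map(\bigcirc_2), \Map(8))$, the top-left term (through the given quasi-isomorphism $QA \to A$) to $\Omega^\bullet(\Map(8))$, and the bottom term to $\Omega^\bullet(\Map'(\bigcirc_2), \Map'(8))$, which in turn is quasi-isomorphic to $\Omega^\bullet(\Map(\bigcirc_2), \Map(8))$ by Lemma~\ref{lem:ucolim} applied to the fibration $PM \times PM \to M \times M$.

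Finally, observe that the topological coproduct of Section~\ref{sec:framedcoprod} is the zigzag $\Omega^\bullet(\Map(\bigcirc_2), \Map(8)) \xleftarrow{\simeq} \Omega^\bullet(\Map'(\bigcirc_2), \Map'(8)) \xleftarrow{\wedge Th} \Omega^\bullet(\Map(8))$, which is precisely the image under these iterated-integral maps of the $A$-bimodule zigzag $(A \otimes A)/A \to \Omega^\bullet(\FM_M(2))/\Omega^\bullet(UTM) \xleftarrow{\wedge Th} A$ tensored with $B \otimes B$ over $A^{\otimes 4}$. Combining this naturality with the homotopy $\iid \otimes h$ produced in the first step then gives the chain homotopy witnessing commutativity of the lemma's diagram. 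The only real content, exactly as in Lemma~\ref{lem:hocop}, is the bookkeeping of this single homotopy through the zigzag; the naturality of iterated integrals with respect to $A$-bimodule maps is supplied by Lemma~\ref{lem:barpath}, and no new idea beyond the matrix-homotopy calculation of Lemma~\ref{lem:hocop} is required.
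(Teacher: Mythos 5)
The paper gives no written proof of this lemma; the intended argument appears only in the paragraph preceding the statement, which explains that the cochain-level coproduct arises by tensoring the $A$-bimodule zigzag $(A\otimes A)/A \to \Omega^\bullet(\FM_M(2))/\Omega^\bullet(UTM) \xleftarrow{\wedge Th} A$ with $B\otimes B$ over $A^{\otimes 4}$, so that one "merely needs a homotopy inverse on the image of $\wedge Th$." Your proof is a correct reconstruction of exactly that: since $B\otimes B$ is cofibrant over $A^{\otimes 4}$, tensoring preserves the hypothesized homotopy $h$ (as $\iid\otimes h$), and the iterated-integral comparison maps of Lemma \ref{lem:barpath} together with the excision equivalence of Lemma \ref{lem:ucolim} identify the tensored complexes with $\Omega^\bullet$ of the relevant mapping spaces, yielding the commutativity. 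One small point to tighten: you appeal to the "matrix-homotopy calculation of Lemma \ref{lem:hocop}" as the final ingredient, but that calculation is not actually needed in the $\chi(M)=0$ case — it was required in Lemma \ref{lem:hocop} because the reduced coproduct is formulated in the category $\MAAA$ of pairs, and the functor $T_{F,E}$ assembles two-row total complexes where the homotopy sits off-diagonally, whereas here there is only a single bimodule tensoring $(B\otimes B)\otimes_{A^{\otimes 4}}(-)$, so the inherited homotopy $\iid\otimes h$ already does the entire job.
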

\todo[inline]{I changed this a bit. Check for repetition later. Decide whether to mention shifts.}

\section{Simply-connected case, and proofs of Theorems \ref{thm:main_1} and \ref{thm:main_2}, and the first part of Theorem \ref{thm:main_4}} 
\label{sec:thm12proofs}
Let us assume that $A$ is a Poincaré duality model for $M$. For $M$ simply-connected this exists by \cite{LambrechtsStanley}. In that case we obtain the following models for $UTM$ and $\FM_M(2)$, cf. \cite{LambrechtsStanley3, Idrissi, CamposWillwacher},
\begin{align*}
cone( A \overset{ m \circ \Delta}{\to} A) & \simeq \Omega^\bullet(UTM) \\
cone( A \overset{\Delta}{\to} A \otimes A) & \simeq \Omega^\bullet(\FM_M(2)).
\end{align*}
The map $\Omega^\bullet(\FM_2(M))\to\Omega^\bullet(UTM)$ (restriction to the boundary) is modelled by the map 
\begin{align*}
cone( A \overset{\Delta}{\to} A \otimes A) 
&\to 
cone( A \overset{ m \circ \Delta}{\to} A)
\\
(a,b\otimes c) &\mapsto (a,bc),
\end{align*}
i.e., just by the multiplication $m:A\otimes A\to A$.

A representative of the Thom class in $A / \Omega^\bullet(UTM)$ is given by the pair $(m\circ\Delta(1), (1,0))$. In this case we can easily write down the maps $g$ and $h$ appearing in Lemma \ref{lem:hocop}. Namely, $h=0$ and the map $g$ is given by $x \mapsto (\Delta(x),x, 0)$, that is the following diagram (strictly) commutes
$$
\begin{tikzcd}
A[-n] \ar[r, "g"] \ar[dr, "\wedge Th"'] & A\otimes A/ \Omega^\bullet(\FM_M(2)) =  \Tot\left(    \begin{tikzpicture}
        \node(2) at (1.5,.5) {$A \otimes A$};
        \node(3) at (0,-.5) {$A$};
        \node(4) at (1.5,-.5) {$A \otimes A$};
        \draw[double equal sign distance](2) to (4);
        \draw[->](3) to node[above]{$\Delta$} (4);
    \end{tikzpicture} \right)
\ar[d] \\
&  A/ \Omega^\bullet(UTM)= \Tot\left(    \begin{tikzpicture}
        \node(2) at (1.5,.5) {$A$};
        \node(3) at (0,-.5) {$A$};
        \node(4) at (1.5,-.5) {$A$};
        \draw[double equal sign distance](2) to (4);
        \draw[->](3) to node[above]{$m\circ\Delta$} (4);
    \end{tikzpicture} \right),
\end{tikzcd}\qquad
\begin{tikzcd}
x \ar[mapsto, r, "g"] \ar[mapsto, dr, "\wedge Th"'] & \left(    \begin{tikzpicture}
        \node(2) at (1.5,.5) {$\Delta(x)$};
        \node(3) at (0,-.5) {$x$};
        \node(4) at (1.5,-.5) {$0$};
    \end{tikzpicture} \right)
\ar[mapsto, d] \\
&  \left(    \begin{tikzpicture}
        \node(2) at (1.5,.5) {$m\Delta(x)$};
        \node(3) at (0,-.5) {$x$};
        \node(4) at (1.5,-.5) {$0$};
    \end{tikzpicture} \right).
\end{tikzcd}
$$
Thus the resulting diagram \eqref{diag:mor} is equivalent to
\begin{equation}\label{equ:sec6diag}
\begin{tikzcd}
A[-n] \ar[d, equal]\ar[r, "\Delta"] & A \otimes A \ar[d] \\
A[-n] \ar[r, "m\Delta"] & A.
\end{tikzcd}
\end{equation}
Note that here we did not even need to pass to a cofibrant replacement $QA$ of the $A$-bimodule $A$, as in Lemma \ref{lem:hocop}. More pedantically, we can take, for example, $QA=B(A,A,A)=:B$, the bar resolution of the $A$-bimodule $A$ as in section \ref{sec:it int}, but then all maps from $QA$ in Lemma \ref{lem:hocop} factor through the canonical projection $QA\to A$.

From this we can compute the string coproduct (cohomology product).
To this end, we trace back and follow the zigzag \eqref{equ:string coproduct zigzag}.

\newcommand{\vE}{M_E}
\newcommand{\vF}{M_F}
\newcommand{\veight}{M_8}

Let us first make explicit all cochain complex models we use. Our model for $E=\Map(\bigcirc_2)\to M\times M$ (loops with two marked points) is 
\[
\vE = B\otimes_{A^{\otimes 2}} B \cong \bigoplus_{p,q\geq 0} (\bar A[1])^{\otimes p} \otimes A \otimes (\bar A[1])^{\otimes q} \otimes A
\]
considered as an $A^{\otimes 2}$-module, and equipped with the natural Hochschild-type differential.
Similarly, the cochain complex modelling $\Map(8)\to M$ is \[
\veight = B\otimes_{A^{\otimes 4}} B
\cong
\bigoplus_{p,q\geq 0} (\bar A[1])^{\otimes p} \otimes A \otimes (\bar A[1])^{\otimes q}.
\]
The subspace $F\subset \Map(8)$ (figure 8 loops with one ear trivial) is modelled by the quotient of $\veight$ by the summands with $p>0$ and $q>0$
\[
\vF =\veight / \bigoplus_{p,q> 0} (\bar A[1])^{\otimes p} \otimes A\otimes (\bar A[1])^{\otimes q}
\cong \bigoplus_{p,q\geq 0, pq=0} (\bar A[1])^{\otimes p} \otimes A\otimes (\bar A[1])^{\otimes q}.
\]
The cochain complex computing $H(LM)$ is the Hochschild complex
\[
C(A) = B\otimes_{A^{\otimes 2}} / A \cong \bigoplus_{k\geq 0} (\bar A[1])^k \otimes A.
\]
The cochain complex computing $H(LM,M)$ in this case is the reduced Hochschild complex 
\[
\Tot(C(A) \to A) \xleftarrow{\simeq} \bar C(A) :=\bigoplus_{k\geq 1}  (\bar A[1])^k \otimes A.
\]
We start with two cocycles 
\begin{align*}
a &=(\alpha_1,\cdots,\alpha_k, x)=:(\alpha,x) \\
b &=(\beta_1,\cdots,\beta_l, y)=:(\beta,y)
\end{align*} 
in this complex $\bigoplus_{k\geq 1} (\bar A[1])^k \otimes A$, representing cohomology classes in $H(LM,M)$. Our goal is to produce a cocycle representing their pullback under the string coproduct. 

First, consider the rightmost map in \eqref{equ:string coproduct zigzag}. 
The rightmost map in \eqref{equ:string coproduct zigzag} is realized on cochains by (cf \eqref{equ:concatloop})
\[
(a,b) \mapsto c_1 := \pm (\alpha, xy, \beta)\in 
\bigoplus_{p,q\geq 1} (\bar A[1])^{\otimes p} \otimes A \otimes  (\bar A[1])^{\otimes q} 
\xrightarrow{\simeq} 
\Tot(\veight \to \vF).
\]

Next consider the map $H^{\bullet+1-n}(\Map(8),F)\to H^{\bullet+1}(E,F)$, i.e., the middle map in \eqref{equ:string coproduct zigzag}. This map is, according to section \ref{sec:coprodrel} and the discussion in the beginning of this section, represented on cochains by the map 
\[
\Tot(\veight \to \vF) \to \Tot(\vE\to \vF) 
\]
obtained from diagram \eqref{equ:sec6diag} by tensoring the first row with $\vE$ over $A^{\otimes 2}$ and the second by $\vE$ over $A$.
Our cocycle $c_1$ above is hence mapped to the cocycle 
\[
c_2 := \sum \pm (\alpha, \D', \beta, x\D''y)
\in
\vE \subset \Tot(\vE\to \vF),
\]
where $\sum \pm \D' \otimes x\D''y = \Delta(\alpha_0 \beta_0)$.

Finally, we apply the pullback via the splitting map $s$ to obtain a map 
\[
H^\bullet(E,F) \to H^{\bullet -1}(LM,M).
\]
According to Proposition \ref{prop:splitmap} this map on cochains is given by formula \eqref{equ:splitmap cochains}. Applied to our cochain $c_2$ the final result reads
\[
\sum \pm (\alpha,\D',\beta, x\D''y) 
\in 
\bar C(A),
\]
in agreement with \eqref{equ:intro coproduct}.
Summarizing, we hence obtain Theorem \ref{thm:main_2}, i.e., the following result.
\begin{Thm}
For $M$ simply-connected, and $A$ a Poincaré duality model for $M$, the natural map
$$
\overline{HH}_\bullet(A,A) \to H^\bullet(LM, M),
$$
intertwines the string coproducts.
\end{Thm}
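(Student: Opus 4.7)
The plan is to use Lemma \ref{lem:hocop} to reduce the zigzag \eqref{equ:string coproduct zigzag} defining the string coproduct on cochains to a single strictly commuting square of $A$-bimodules, and then to trace explicit reduced Hochschild cocycles through the resulting simplified description.

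First, I would identify explicit $A$-bimodule models for the ingredients of Lemma \ref{lem:hocop} using the Poincaré duality structure on $A$. The cone models $\mathit{cone}(A\xrightarrow{\Delta}A\otimes A)$ and $\mathit{cone}(A\xrightarrow{m\Delta}A)$ for $\FM_M(2)$ and $UTM$ recalled just above identify a representative $(m\Delta(1),(1,0))$ of the Thom class, and a direct check shows that the assignment $x\mapsto(\Delta(x),x,0)$ furnishes a strict lift $g$, so that the homotopy $h$ in \eqref{equ:hocopsquare} can be taken to be zero. Crucially, no genuine cofibrant resolution is required: one may take $QA=B(A,A,A)$, and then all the maps in sight factor through the canonical augmentation $QA\to A$. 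This is the main structural input, and is the step most likely to require care in bookkeeping signs and in verifying the side conditions of Lemma \ref{lem:hocop}.

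Next, once the simplified square \eqref{equ:sec6diag} is in place, the middle portion of \eqref{equ:string coproduct zigzag} is represented on cochains by applying the functor $T_{F,E}$ of section \ref{sec:coprodrel} to that square. Concretely, the cochain models used are $\vE=B\otimes_{A^{\otimes 2}}B$ for $\Map(\bigcirc_2)$, $\veight=B\otimes_{A^{\otimes 4}}B$ for $\Map(8)$, and the quotient $\vF$ of $\veight$ modelling the figure eights with one ear constant; with these, the simplified diagram delivers an explicit chain map $\Tot(\veight\to\vF)\to\Tot(\vE\to\vF)$ which no longer involves any homotopy inverses.

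Finally, I would trace two reduced Hochschild cocycles $a=(\alpha,x)$ and $b=(\beta,y)$ through the three arrows of \eqref{equ:string coproduct zigzag}: the concatenation/"read off each ear" map of \eqref{equ:splitloop}, the Thom multiplication built from \eqref{equ:sec6diag}, and the pullback along the splitting map described by Proposition \ref{prop:splitmap} and formula \eqref{equ:splitmap cochains}. The first step produces $\pm(\alpha,xy,\beta)$; the second inserts a diagonal to yield $\sum\pm(\alpha,\D',\beta,x\D''y)\in\vE$; the third returns an element of the same shape inside $\bar C(A)$, which is precisely formula \eqref{equ:intro coproduct}. Combined with the naturality of the iterated integral quasi-isomorphism $\bar C(A)\to\Omega^\bullet(LM,M)$ in each piece of the zigzag, this shows that $\overline{HH}_\bullet(A,A)\to H^\bullet(LM,M)$ intertwines the string coproducts, as desired.
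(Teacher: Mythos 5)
Your proposal is correct and follows essentially the same route as the paper's own argument in Section~\ref{sec:thm12proofs}: invoke Lemma~\ref{lem:hocop} with the Lambrechts--Stanley cone models, take the explicit representative $(m\Delta(1),(1,0))$ of the Thom class so that $g(x)=(\Delta(x),x,0)$ and $h=0$, note that no genuine cofibrant replacement is needed, apply $T_{F,E}$ to the resulting strict square~\eqref{equ:sec6diag}, and then trace $(\alpha,x)$, $(\beta,y)$ through the three arrows of~\eqref{equ:string coproduct zigzag} to land on~\eqref{equ:intro coproduct}. The only minor difference is that you correctly point to~\eqref{equ:splitloop} rather than~\eqref{equ:concatloop} for the rightmost map, which if anything fixes a slight imprecision in the paper's citation.
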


We can compute the string product on $H(LM)$ by a similar, albeit simpler computation. 
To this end we begin with a cochain in the Hochschild complex 
\[
(\alpha, x) \in C(A),
\]
representing a cohomology class in $H(LM)$.
We then trace this class through the zigzag \eqref{equ:product_zigzag}.
The image in our model $\veight$ for $H(\Map(8))$ is given by formula \eqref{equ:concatloop} as
\[
p_1 := (\alpha',x,\alpha'')
\in
\veight.
\]
The remaining maps of \eqref{equ:product_zigzag}, namely $H^{\bullet-n}(\Map(8))\to H^\bullet(LM\times LM)$ have been discussed in section \ref{sec:figeight}. According to Lemma \ref{lem:hoprod} and the discussion preceding it they are given on chains by tensoring the upper row ($g=\Delta$) of \eqref{equ:sec6diag} with $B\otimes B$ over $A^{\otimes 4}$ to obtain a map of cochain complexes
\[
\veight[-n] \to C(A)\otimes C(A).
\]
Concretely, on our cochain $p_1$ this produces the cochain
\[
p_2 = \sum (\alpha', x')\otimes (\alpha'', x'')
=
\sum (\alpha', x\D')\otimes (\alpha'', \D'').
\]
this agrees with \eqref{equ:intro product}.
It is furthermore obvious from the construction of the map via iterated integrals that the BV operator is also preserved (see Section \ref{sec:Connesdiff}).
Hence we have shown Theorem \ref{thm:main_1}.

\begin{proof}[Proof of first part of Theorem \ref{thm:main_4}]
Similarly, we can prove the first part of Theorem \ref{thm:main_4}. For the string product, the same computations as above work. For the coproduct we use the 
description of section \ref{sec:chi 0 case}.
Proceeding otherwise as in the relative case above we obtain the string coproduct (cohomology product) of the cocycles $(\alpha,x)$ and $(\beta,y)$ in $\bar C(A)$ as the composition:
\[
\begin{tikzcd}
B\otimes_{A^e}A \otimes B\otimes_{A^e}A
\ar{r}
&
(B\otimes B)\otimes_{A^{\otimes 4}} A
\ar{r}{\Delta}
&
(B\otimes B)\otimes_{A^{\otimes 4}} (A\otimes A/A)
\ar{r}{s}
&
B\otimes_{A^2}A \\
((x,\alpha), (y,\beta))
\ar[symbol=\in]{u}
\ar[mapsto]{r}
&
\pm (\bar\alpha,\bar\beta,xy)
\ar[symbol=\in]{u}
\ar[mapsto]{r}
&
\sum \pm (\alpha, \beta, \D'xy, D'', 0)
\ar[symbol=\in]{u}
\ar[mapsto]{r}
&
\sum \pm (\bar\alpha,\D'',\bar\beta,\D'\alpha_0\beta_0)
\ar[symbol=\in]{u}
\end{tikzcd}.
\]
\end{proof}

\section{Involutive homotopy Lie bialgebra structure on cyclic words}

\subsection{Lie bialgebra of cyclic words}
For a graded vector space $H$ with a non-degenerate pairing of degree $-n$ it is well known (see \cite{Gonzalez} for an elementary review) that the cyclic words 
\begin{equation}\label{equ:cycdef}
\Cyc(H^*)= \bigoplus_{k\geq 0} (H^*[-1])^{\otimes k}_{C_k}
\end{equation}
carry an involutive dg Lie bialgebra structure with bracket $[,]$ and cobracket $\Delta$ of degree $n-2$.
Concretely, one has the following formulas (cf. \cite[section 3.1]{ChenEshmatovGan})
\begin{align*}
\Delta(a_1\cdots a_k) &= \sum_{i<j}\pm \epsilon(a_i,a_j) (a_1\cdots a_{i-1}a_{j+1}\cdot a_k) \wedge (a_{i+1}\cdots a_{j-1}) \\
[(a_1\cdots a_k),(b_1\cdots b_m)]
&=
\sum_{i,j} \pm \epsilon(a_i,b_j)((a_{i+1}\cdots a_k a_1\cdots a_{i-1}b_{j+1}\cdots b_mb_1\cdots b_{j-1}))\, .
\end{align*}

There are also slight variants.
First, we have the quotient Lie bialgebra
\[
\bCyc(H^*) = \Cyc(H^*)/\K,
\]
dropping the $k=0$-term from the direct sum \eqref{equ:cycdef}.
Second, using the setting and notation of section \ref{sec:GCM}, we have the Lie sub-bialgebra
\[
\Cyc(\bar H^*)\subset \Cyc(H^*),
\]
dropping the span of $1$ from $H$.
Finally, we have the quotient Lie bialgebra
\[
\bCyc(\bar H^*) = \Cyc(\bar H^*) /\K.
\]

\subsection{Twisting by Maurer-Cartan elements}\label{sec:IBLtwist}
Next, it is furthermore well known that a (degree $n-2$-)involutive dg Lie bialgebra structure on $\fg$ can be encoded as a BV operator (up to degree shifts) on the symmetric algebra
\[
 S(\fg[n-3])[[\hbar]]
\]
with $\hbar$ a formal parameter of degree $6-2n$.
Concretely, on this space we then have a degree $+1$ operator
\[
\Delta_0 = \delta_{\fg} + \delta_{c} + \hbar \delta_b,
\]
where $\delta_{\fg}$ is the internal differential on $\fg$, $\delta_c$ applies the cobracket to one factor in the symmetric product and $\delta_b$ is an differential operator of order 2 applying the bracket to two factors.
All relations of the involutive Lie bialgebra can be compactly encoded into the single equation
\[
\Delta_0^2=0.
\]
More generally, such a BV operator encodes a homotopy involutive Lie bialgebra structure ($\IBL_\infty$-structure), see e.g. \cite[section 5.4]{CamposMerkulovWillwacher} for more details.

Now suppose we have an element
\[
z\in S(\fg[n-3])[[\hbar]]
\]
of degree $|z|=6-2n$.
Then we may twist our involutive Lie bialgebra structure to a different $\IBL_\infty$-structure encoded by the operator
\[
\Delta_z = e^{-\frac z \hbar}\Delta_0 e^{\frac z \hbar}
\]
if the master equation (Maurer-Cartan equation) 
\[
\Delta e^{\frac z \hbar} = 0 
\quad \Leftrightarrow \quad \delta_c z +\frac 1 2 [z,z]=0
\]
is satisfied. Concretely, in our setting
\[
\Delta_z = \Delta_0 +[z,-].
\]
The operator $[z,-]$ is always a derivation, and hence only contributes to the operations of arity $(k,1)$ (1 input and $k\geq 1$ outputs) of our $\IBL_\infty$-structure. In other words, the twisting changes the differential and the (possibly higher genus and higher arity) cobrackets.

\subsection{Example: Lie bialgebra structure of \cite{ChenEshmatovGan}}
Let now $H$ be a (degree $n$-)Poincar\'e-duality algebra, with the Poincar\'e duality encoded by the map 
\[
\epsilon : H\to \K
\]
of degree $-n$.
We want to apply the twisting construction of the previous section to the case $\fg=\bCyc(H^*)$.
Indeed, consider the degree 6-2n element encoding the product in $H$
\[
z=\sum_{p,q,r} \pm \epsilon(e_p e_q e_r) (e_p^*e_q^*e_r^*)
\in \fg[n-3] \subset S(\fg[n-3]) \subset S(\fg[n-3])[[\hbar]],
\]
where the $e_j$ range over a basis of $H$, and $e_j^*$ is the corresponding dual basis of $H^*$.
One can check that this is indeed a Maurer-Cartan element in the sense of the previous subsection.
Continuing to use the notation of the previous subsection the BV operator encoding the $\IBL_\infty$-structure is just
\[
\Delta_0 + [z,-].
\]
Since $z$ is linear (i.e., only has Terms in the $k=1$ summand of \eqref{equ:cycdef}), and since there is no $\hbar$-dependence, the twist in fact only changes the differential of our dg Lie bialgebra structure.
In fact, the altered differential is just the Hochschild (or rather cyclic) differential, rendering $\fg$ into the cyclic complex of the coalgebra $H^*$, computing its cyclic homology.
As usual, the normalized cyclic complex $\bCyc(\bar H^*)\subset \bCyc(H^*)$ is preserved under this differential and we hence have equipped it with an involutive dg Lie bialgebra structure.
This is the structure found in \cite{ChenEshmatovGan}.

\subsection{Ribbon graph complex}\label{sec:ribbonGC}
Recall the definition of the graph complex (dg Lie algebra) $\GC_H$ as in section \ref{sec:GCM}.
Note that the construction leading to $\GC_H$ can be re-iterated for ribbon (fat) graphs instead of ordinary graphs. 
Here a ribbon graph is a graph with a cyclic order prescribed on each star, i.e., on each set of incoming half-edges and decorations at a vertex.
We denote the corresponding graphical dg Lie algebra of ribbon graphs by $\GC_H^{As}$. There is a natural map of dg Lie algebras
\[
\GC_H' \to \GC_H^{\op{As}}
\]
by sending a graph to the sum of all ribbon graphs built from the original graph by imposing cyclic orders on stars.
In fact, this map can be seen as a version of naturality of the Feynman transform of cyclic operads, together with the natural map of cyclic operads $\op{As}\to \op{Com}$.

The main observation is now that there is a map of dg Lie algebras
\[
f: \GC_H^{\op{As}}\to S(\bCyc(H^*)[n-3])[[\hbar]]
\]
defined on a ribbon graph $\Gamma$ as follows.
\begin{itemize}
\item Note that the ribbon graph has an underlying surface whose genus we denote by $g$, and number of boundary components $b$.
\item If $\Gamma$ has vertices of valency $\neq 3$, we set $f(\Gamma)=0$.
\item Otherwise we set 
\[
f(\Gamma) = \pm \hbar^g (c_1)(c_2)\cdots (c_b),
\]
where $c_j$ is the cyclic word obtained by traversing the $j$-th boundary component counterclockwise, and recording the $H$-decorations.
\[
\begin{tikzpicture}[baseline=-.65ex]
\node[int] (v1) at (0:1) {};
\node[int,label=240:{$\alpha$}] (v2) at (60:1) {};
\node[int] (v3) at (120:1) {};
\node[int,label=0:{$\beta$}] (v4) at (180:1) {};
\node[int] (v5) at (240:1) {};
\node[int,label=120:{$\gamma$}] (v6) at (300:1) {};
\draw (v1) edge +(0:.5) edge (v2)
      (v2)  edge (v3)
      (v3) edge +(120:.5) edge (v4) 
      (v4) edge (v5)
      (v5) edge +(240:.5) edge (v6)
      (v6)  edge (v1);
\end{tikzpicture}
\quad
\mapsto
\quad
(\alpha\beta\gamma)
\]
\end{itemize}

In particular, given an MC element $z+Z\in \GC_H'$, as in section \ref{sec:GCM}, we then obtain an MC element $$Z_{Cyc}\in S(\bCyc(H^*)[1])[[\hbar]]$$
via the composition of maps of dg Lie algebras
\[
\GC_H\to \GC_H^{\op{As}}\to S(\bCyc(H^*)[1])[[\hbar]].
\]

\subsection{$\IBL_\infty$-structure on cyclic words}
Now we use our MC element $Z_{Cyc}\in S(\bCyc(H^*)[1])[[\hbar]]$ constructed in the previous section to obtain an $\IBL_\infty$-structure on $\bCyc(H^*)$, by the twisting procedure of section \ref{sec:IBLtwist}.
Specifically the structure is defined as before by the BV operator 
\[
\Delta_{Z_{cyc}} = e^{-Z_{cyc}} \Delta_0 e^{Z_{cyc}}= \Delta_0 + [Z_{cyc}, -],
\]
where $\Delta_0$ is the (untwisted) BV operator as before.
Abusively, we shall denote our $\IBL_\infty$-algebra thus obtained by  $(\bCyc(H^*),\Delta_{Z_{cyc}})$.
We note that this procedure alters the differential on $\bCyc(H^*)$ and alters and extends the Lie cobracket by a series of higher cobrackets, i.e., operations of arity $(r,1)$. The twist however does not affect the Lie bracket, and there are no non-trivial operations of arities $(r,s)$ with $r,s\geq 2$ or with $s>2$.

Overall we obtain the following result
\begin{Prop}
$(\bCyc(H^*), \Delta_{Z_{cyc}})$ is an $IBL_\infty$-algebra whose differential is the natural "Hochschild" differential on the cyclic complex of $H^*$ as $\op{Com}_\infty$-algebra.
\end{Prop}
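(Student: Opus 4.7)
The plan is to split the statement into two parts. First, I will verify the $IBL_\infty$-identity $\Delta_{Z_{cyc}}^2 = 0$ by invoking the twisting formalism of section \ref{sec:IBLtwist}. The key input is that the composition
\[
\GC_H' \to \GC_H^{\op{As}} \xrightarrow{f} S(\bCyc(H^*)[n-3])[[\hbar]]
\]
constructed in section \ref{sec:ribbonGC} is a morphism of dg Lie algebras, and hence sends Maurer-Cartan elements to Maurer-Cartan elements. Since by construction $z + Z$ is a Maurer-Cartan element in $\GC_H'$ ($z$ is the twist producing $\GC_H$, in which $Z$ is MC), its image $Z_{cyc}$ satisfies $\delta_c Z_{cyc} + \tfrac{1}{2}[Z_{cyc}, Z_{cyc}] = 0$. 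By the general recollection of section \ref{sec:IBLtwist}, this is exactly the condition ensuring $\Delta_{Z_{cyc}}^2 = 0$, so we obtain an $IBL_\infty$-structure on $\bCyc(H^*)$.

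Second, I will identify the induced differential, i.e.\ the arity-$(1,1)$ operation of this $IBL_\infty$-structure. On the graded vector space $\bCyc(H^*)$ there is no a priori internal differential, so $\Delta_0$ contributes nothing in arity $(1,1)$, and the full differential is the linear-in-$\bCyc(H^*)$ part of the derivation $[Z_{cyc}, -]$. This is picked out by the summands of $Z_{cyc}$ lying in the subspace $\hbar^0 \cdot \bCyc(H^*)[n-3]$. Under $f$, such summands correspond precisely to genus-zero trivalent ribbon graphs with one boundary component, i.e.\ trivalent ribbon trees. The corresponding pieces of $z + Z$ are the tree-level, $\hbar^0$ terms, namely $z$ together with $Z_{tree}$, which by the discussion of section \ref{sec:GraphsM} encodes exactly the $\Com_\infty$-structure on $H$. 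Summing over cyclic orderings at each vertex (the map $\GC_H' \to \GC_H^{\op{As}}$) produces the associated cyclic $\op{As}_\infty$-structure on $H^*$, and reading the decorations around the unique boundary of each ribbon tree reproduces the standard formula for the Hochschild differential on $\bCyc(H^*)$ induced by a cyclic $\Com_\infty$-algebra structure on $H^*$.

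The principal obstacle I anticipate is the combinatorial and sign-bookkeeping verification that the evaluation map $f$ is really a morphism of dg Lie algebras, and that the resulting explicit formula coincides with the standard Hochschild differential of the $\Com_\infty$-algebra $H^*$. Concretely, contracting an edge in a ribbon graph whose two sides border the same boundary component splits one cyclic word into two and must be matched with $\delta_c$, while contracting an edge bordering two distinct boundary components fuses two cyclic words and must be matched with $\hbar \delta_b$; dually, the bracket on $\GC_H^{\op{As}}$ obtained by gluing half-edges must correspond to the BV-bracket on the symmetric algebra. This is essentially a cyclic-operadic Feynman-transform calculation, whose underlying combinatorics is standard, but whose signs are delicate and will absorb most of the rigorous write-up.
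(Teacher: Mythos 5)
Your proposal is correct and follows essentially the same argument that the paper uses (though the paper does not spell it out, letting the Proposition follow from the preceding discussion in sections 7.2--7.5). You correctly invoke the twisting formalism to reduce $\Delta_{Z_{cyc}}^2=0$ to the Maurer-Cartan property of $Z_{cyc}$, which is inherited from $z+Z\in\GC_H'$ because $\GC_H'\to\GC_H^{\op{As}}\xrightarrow{f}S(\bCyc(H^*)[n-3])[[\hbar]]$ is a dg Lie algebra map; and you correctly isolate the induced differential as the genus-zero, one-boundary (i.e.\ ribbon-tree) part of $Z_{cyc}$, which comes from $z+Z_{\mathrm{tree}}$ and hence encodes exactly the $\Com_\infty$-structure, reproducing the cyclic Hochschild differential. (Your Euler-characteristic argument that a ribbon tree always has genus $0$ and one boundary component, and that a one-loop ribbon graph always has genus $0$ and two boundary components, is the right reason.) Your remark that the remaining work is sign bookkeeping in the verification that $f$ is a morphism of dg Lie algebras is accurate; the paper likewise suppresses these verifications and refers to the cyclic Feynman-transform formalism.
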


Finally, we claim that our $\IBL_\infty$-structure on $\bCyc(H^*)$ restricts to the normalized subspace $\bCyc(\bar H^*)$.
That means concretely that the operation $[Z_{cyc},-]$ cannot introduce terms that contain a cyclic word containing the letter $1^*\in H^*$.
Indeed, by the remark at the end of section \ref{sec:GCM} we know that the only pieces of $Z_{cyc}$ that contain the letter $1^*$ at all are those arising from the leading order part $z$ (as in \eqref{equ:GCHz}) of the Maurer-Cartan element.
However, this piece just controls the commutative product on $H$, and hence it leaves the reduced part $\bCyc(\bar H^*)$ invariant. (This is for the same reasons that the reduced Hochschild or cyclic complex is a subcomplex of the full Hochschild or cyclic complex.)
Overall, we obtain the following result, which yields the first statement of Theorem \ref{thm:main_3}.
\begin{Prop}
$(\bCyc(\bar H^*), \Delta_{Z_{cyc}})$ is an $IBL_\infty$-algebra whose cohomology is the reduced cyclic cohomology of the $\op{Com}_\infty$-algebra $H^*$.
\end{Prop}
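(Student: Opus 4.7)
The plan is to deduce this Proposition from the previous one by a two-step argument: first show that the $\IBL_\infty$-structure on $\bCyc(H^*)$ constructed above restricts to the subspace $\bCyc(\bar H^*)$, and then identify the cohomology of the restricted complex with the reduced cyclic cohomology of the $\Com_\infty$-algebra $H^*$. Since the Lie bracket $[,]$ and the degree $(r,1)$ operations $[Z_{cyc},-]$ are the only non-trivial structure maps, it suffices to check preservation of $\bCyc(\bar H^*)$ for each of these.

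For the bracket, I would argue directly from the definition: $[,]$ contracts a letter $a_i$ from one cyclic word with a letter $b_j$ from another using the pairing $\epsilon$ of degree $-n$, and pairing any element of $\bar H^*$ with $1^*$ yields zero (the pairing is concentrated in complementary degrees $0$ and $n$ with $\epsilon(1^*, \omega^*)\neq 0$, so $\epsilon$ vanishes on $\bar H^* \otimes \bar H^*$… wait, that is not quite right either). More carefully, the bracket removes two letters by pairing them and concatenates the remaining cyclic segments; the image lies in $\bCyc(\bar H^*)$ provided the remaining letters all lie in $\bar H^*$, which they do by construction. What must be excluded is that a letter $1^*$ is produced on the output; this does not happen because the bracket never introduces new letters. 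So $[,]$ restricts.

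The more delicate part is the operation $[Z_{cyc}, -]$. The key input is the final observation of section \ref{sec:GCM}, which I plan to invoke as follows: $Z$ can be taken to be trivalent, so in the combined MC element $z+Z \in \GC'_H$ the only trivalent graphs carrying any $1^*$-decoration are those that already appear in the explicit piece $z$ of \eqref{equ:GCHz}. Under the Feynman-transform map $\GC_H^{\op{As}}\to S(\bCyc(H^*)[n-3])[[\hbar]]$ from section \ref{sec:ribbonGC}, these $z$-contributions produce precisely the cyclic-words encoding the commutative product on $H$ (as in the Chen–Eshmatov–Gan example). The operation of bracketing with such a quadratic cyclic element is exactly the cyclic differential of the associative algebra $H^*$, and this manifestly preserves the normalized subcomplex, since the unit $1^*$ on a Hochschild/cyclic chain can always be pushed through the product without creating a non-reduced letter. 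All other contributions of $Z_{cyc}$ only involve letters from $\bar H^*$ and therefore act internally on $\bCyc(\bar H^*)$.

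Having established the restriction, the identification of the cohomology is almost automatic. The tree part $Z_{tree}$ of $Z$ is precisely the part encoding the $\Com_\infty$-structure on $H$ (equivalently the dual $\Com_\infty^\vee$-coalgebra structure on $H^*$), and under the Feynman transform of section \ref{sec:ribbonGC} the induced operation $[Z_{cyc},-]$ recovers the Hochschild/cyclic differential of the $\Com_\infty$-algebra $H^*$ (plus higher corrections from $Z_1, Z_2,\dots$ that change higher operations but not the differential modulo decomposables of arity $(r,1)$ with $r\geq 2$). Combining this with the standard quasi-isomorphism between the full and the normalized cyclic complex of a (possibly $\Com_\infty$-)algebra then gives that $H_\bullet(\bCyc(\bar H^*),\Delta_{Z_{cyc}})$ computes the reduced cyclic homology of the $\Com_\infty$-algebra $H^*$. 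I expect the main obstacle to be bookkeeping the signs and degree shifts in the dictionary between ribbon graphs, cyclic words and the $\Com_\infty$-cyclic complex, and in particular verifying carefully that no term of $Z$ outside of $z$ can contribute a $1^*$ letter — this is where the trivalence reduction of $Z$ from \cite{CamposWillwacher} is essential.
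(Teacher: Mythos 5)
Your proposal follows essentially the same route as the paper: the restriction of $[Z_{cyc},-]$ to $\bCyc(\bar H^*)$ is deduced from the trivalence observation at the end of section~\ref{sec:GCM} (only the explicit piece $z$ of \eqref{equ:GCHz} carries $1^*$-decorations) together with the fact that this piece encodes the commutative product on $H$ and therefore preserves the normalized subcomplex for the same formal reason that the reduced Hochschild/cyclic complex is a subcomplex; the cohomology identification then follows from the previous proposition identifying the differential with the cyclic differential of the $\Com_\infty$-algebra $H^*$. The extra material in your write-up (the direct check that the binary bracket restricts, which the paper already has in section~\ref{sec:GraphsM}/7.1 as the statement that $\Cyc(\bar H^*)\subset\Cyc(H^*)$ is a Lie sub-bialgebra; and the invocation of a ``quasi-isomorphism between full and normalized cyclic complex'', which is not really needed and is loosely phrased) does not change the core argument, which matches the paper's.
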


\subsection{Graded Lie bialgebra structure}
Any $\IBL_\infty$-structure on a graded vector space $V$ induces an ordinary involutive Lie bialgebra structure on the cohomology $H(V)$.
In particular, from the the $\IBL_\infty$-structure on $\Cyc(H^*)$ from the previous subsection we obtain a graded involutive Lie bialgebra structure on the cyclic cohomology $H(\Cyc(H^*))$ of the $\Com_\infty$-algebra $H$.
The remaining statement of Theorem \ref{thm:main_3} (to be shown) is encoded in the following result.
\begin{Thm}\label{thm:main_3b}
The natural map $(H(\op{Cyc}(H))^* \to H^\bullet(LM_{S^1})$ is compatible with the Lie bialgebra structures.
\end{Thm}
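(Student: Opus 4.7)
The plan is to verify the compatibility at the chain level, by tracing representatives through the explicit zigzags of section \ref{sec:cochain zigzags}, using the graph complex model of $\FM_M(2)$ supplied by the Maurer-Cartan element $Z\in \GC_H$ of \cite{CamposWillwacher}. By Proposition \ref{prop:cycqiso} and Lemma \ref{lem:equivtohoch}, the definitions \eqref{equ:defbracket} and \eqref{equ:defcobracket} of the string bracket and cobracket can be reexpressed as the composition of (i) the inclusion $\bCyc(\bar H^*) \hookrightarrow B\otimes_{A^e} A$ via Connes' $B$, (ii) the string product or coproduct on (reduced) Hochschild chains, and (iii) the projection back to cyclic words. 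It therefore suffices to produce cochain-level formulas for these operations and match them with the operations coming from $\Delta_{Z_{cyc}}$.

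First I would handle the bracket. Here Lemma \ref{lem:hoprod} applies: with $QA$ a suitable cofibrant bimodule resolution of $A$, the string product is realized once we provide a homotopy commutative square from $QA[-n]$ to the diagonal of the model for $\FM_M(2)$. The graph model $\Graphs_M(2)$ supplies precisely this datum, with the "diagonal bimodule map" encoded combinatorially by the pieces of $Z$ that connect the two external vertices. Tracing a pair of cyclic words $(a_1\cdots a_k)\otimes (b_1\cdots b_\ell)$ through the resulting formula: each way to plug a decoration of one cyclic word into a trivalent vertex of (a graph in) $Z$ connected to a decoration in the other cyclic word produces a contribution to a new cyclic word, and summing over such contractions reproduces exactly the Lie bracket of the image $Z_{cyc}\in S(\bCyc(H^*)[n-3])[[\hbar]]$ under the ribbon graph map of section \ref{sec:ribbonGC}. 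The fact that only trivalent graphs contribute is the main point: for combinatorial degree reasons a non-trivalent vertex would force an extra Hochschild/cyclic coboundary.

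Next I would treat the cobracket. Here Lemma \ref{lem:hocop} reduces matters to constructing a pair $(g,h)$ realizing the "diagonal map with its homotopy" on $QA$. The bimodule map $g$ is furnished by the same tree-level piece $Z_0$ of $Z$ (encoding the $\Com_\infty$-coproduct dual to the product on $H$), while the homotopy $h$ is built from the loop-order-one piece $Z_1\in \GC_H$; in dimension three one also needs the higher $Z_{\geq 2}$ contributions. Under the zigzag of section \ref{sec:coprodrel}, plugging a cyclic word representative into $g$ produces a pair of cyclic words by splitting along a tree edge, reproducing the genus-zero part of $[Z_{cyc},-]$, whereas plugging into $h$ produces a single longer cyclic word with a genus-one ribbon-graph insertion, reproducing the $\hbar$-part of $[Z_{cyc},-]$. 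The projection back to $\bCyc(\bar H^*)$ is harmless because, as observed at the end of section \ref{sec:GCM}, only the "trivial" part $z$ of the Maurer-Cartan element contains any $1^*$-decorations.

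The main obstacle will be the careful verification that the chosen chain-level representatives of $g$ and $h$ on the resolution $QA$ actually assemble into the operator $\Delta_{Z_{cyc}}=\Delta_0+[Z_{cyc},-]$ and not merely a chain map homotopic to it; in particular, one must check that the ribbon-graph-to-cyclic-word map $f$ of section \ref{sec:ribbonGC} is the correct avatar of the cochain-level formulas, with signs matching those implicit in the iterated integral construction of section \ref{sec:it int} and the splitting map of section \ref{sec:splittingmap}. Once this combinatorial dictionary is established, the theorem follows by combining the above identifications with Lemmas \ref{lem:hoprod} and \ref{lem:hocop}, and passing to cohomology.
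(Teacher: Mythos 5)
Your high-level strategy — trace cyclic-word representatives through the zigzags of section~\ref{sec:cochain zigzags} using the graphical models coming from $\Graphs_M$, and match the resulting formulas against $\Delta_{Z_{cyc}}=\Delta_0+[Z_{cyc},-]$ — is the one the paper pursues. However, the proposal glosses over the technical crux. You assert that the bimodule map $g$ is ``furnished by'' $Z_0$ and the homotopy $h$ by $Z_1$; in the paper neither is handed to you by a piece of $Z$. Instead, one first builds explicit (truncated) models $A$, $\C$, $\U$ from the tree and loop-order-$\leq 1$ parts of $\Graphs_M$, proves that the square $(A\otimes A\to A)\to(\C\to\U)$ is a homotopy pushout (Lemma~\ref{lem:contrsquare}, Prop.~\ref{prop:cube}), and then constructs an explicit contracting homotopy $H$ of this square by decomposing it into colored contractible subquotients and running the perturbation lemma. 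Only then are $g=(H_{A\otimes A}+H_\C)\circ(\wedge\Th)$ and $h=(H_A+H_\U)\circ(\wedge\Th)$, and the resulting formula for $h$ mixes $z_0$ and $z_1$ (four of the five terms of $H_A\circ(\wedge\Th)$ involve $z_0$, one involves $z_1$). Without the perturbation-lemma step there is no mechanism to actually produce these formulas.

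Two further concrete problems. First, your claim that ``in dimension three one also needs the higher $Z_{\geq 2}$ contributions'' is incorrect for the statement at hand: the paper's models are built only from graphs of loop order $\leq 1$ in all dimensions, and this suffices because the induced Lie bialgebra on cohomology depends only on $Z_0$ and $Z_1$ (cf.\ the discussion after Theorem~\ref{thm:main_3}). Second, your justification that ``only trivalent graphs contribute'' because ``a non-trivalent vertex would force an extra Hochschild/cyclic coboundary'' is not the mechanism in the paper: the ribbon-graph map $f$ of section~\ref{sec:ribbonGC} is set to zero on non-trivalent graphs by definition, the trivalence of $Z$ itself is imported from \cite{CamposWillwacher}, and — crucially, and missing from your plan — one must restrict to the closed trivalent subcomplex of $B\otimes_{A^e}A$ and project to $T\bar H\otimes H$ to land in cyclic words over $H$ rather than over the much larger model $A$; this projection step is what allows the explicit formulas of Theorems~\ref{thm:bracket} and \ref{thm:cobracket reduced} to be read off and compared with $[Z_{cyc},-]$.
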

The proof will occupy the next two sections, and proceeds by an explicit computation.
At this point, let us just make the Lie bialgebra structure on the left-hand side more explicit. First, the differential on the cyclic complex $\Cyc(H^*)$ is given by the genus zero ($\hbar^0$-) and $1$-boundary-component-part $z_0$ of the MC element $Z_{cyc}$.
This is just given by the tree piece, which is in turn given by the tree piece $Z_{tree}$ of $Z\in \GC_H$ encoding (only) the $\Com_\infty$-structure on $H$.
The Lie bracket is not altered, as we remarked above.
The Lie cobracket is altered, but only receives contributions from the genus-0 and 2-boundary-components-part $z_1$ of $Z_{cyc}$. This is just the 1-loop part, determined by the 1-loop part $Z_1$ of $Z\in \GC_H$, cf. \eqref{equ:Zsplit}.
There are no further corrections to the cohomology Lie bialgebra structure.

\section{Graphical version}\label{sec:graphical version}

Having proven Theorems \ref{thm:main_1}, \ref{thm:main_2} and the first part of Theorem \ref{thm:main_3}, it remains for us to show the remaining statement of Theorem \ref{thm:main_3}, or more precisely Theorem \ref{thm:main_3b}.
To do this we will use the explicit zigzags of section \ref{sec:cochain zigzags} defining the string (co)product (respectively the (co)bracket). However, in the non-simply connected situation we cannot use the Lambrechts-Stanley model for configuration spaces, unfortunately.
Hence we use the more complicated graphical models of configuration spaces of \cite{CamposWillwacher}.
In this section we shall introduce the specific models and some auxiliary results.
Finally, the proof of Theorems \ref{thm:main_3}, \ref{thm:main_3b} will be given in section \ref{sec:thm3proof}.

Technically, the goal of this section is to write down formulas for the homotopy commuting squares in Lemmas \ref{lem:frcop} and \ref{lem:hocop}.

\subsection{Graphical models for configuration spaces}
Recall that the main input to our construction of the string bracket and cobracket is the compactified configuration space of 2 points on $M$, $\FM_M(2)$, together with the boundary inclusion and the forgetful maps
\begin{equation}\label{equ:bdry_inclusion}
\begin{tikzcd}
UTM = \partial \FM_M(2) \ar{d}\ar[hookrightarrow]{r}& \FM_M(2) \ar{d}\\
M \ar{r}{\Delta} & M\times M
\end{tikzcd}\, .
\end{equation}

The goal of this subsection is to construct a real model for the objects and morphisms in this square. More precisely, we require a dgca $A$ quasi-isomorphic to $M$ and two $A\otimes A$-modules modelling the upper arrow in the diagram.

We note that the morphisms in the above square can be interpreted as the simplest non-trivial instances of the action of the little disks operad on the (framed) configuration space of points. Furthermore, combinatorial models (graph complexes) $\Graphs_M$ for configuration spaces of points, with the little disks action have been constructed in \cite{CamposWillwacher, CDIW}, from which a dgca model of the morphism \eqref{equ:bdry_inclusion} can be extracted, cf. section \ref{sec:GraphsM} above. 
However, in our situation their models can be much simplified, essentially by discarding all graphs of loop orders $\geq 2$.
Concretely we make the following definitions.

\begin{itemize}
    \item Our dgca model for $M$ will be the tree part of ${}^*\op{Graphs}_M(1)$, that is the space spanned by rooted (at least trivalent) trees, where each vertex is decorated by an element of $\op{Sym}\bar{H^\bullet}$. We denote this dgca by $A$. It can be identified with 
    $$
    A\cong \op{Com} \circ \op{coLie}_\infty \{1\} \circ \bar{H}^\bullet,
    $$
    that is the bar-cobar resolution of the $\op{Com}_\infty$ algebra $H^\bullet$.
    \item Our model for $\FM_M(2)$ will be the tree part of ${}^*\op{Graphs}_M(2)$, which we denote by $\mathcal{C}$. It has a natural decomposition as a graded vector space
    $$
    \mathcal{C} = B(A, A, A) \oplus A \otimes A,
    $$
    with an additional differential $d_s: B(A,A,A) \to A \otimes A$.
    The graphs in the two summands are schematically depicted as follows.
    \begin{align*}
    &
        \begin{tikzpicture}[baseline=-.65ex]
        \node[ext,label=90:{$A$}] (v1) at (0,0) {1};
        \node[ext,label=90:{$A$}] (v2) at (3,0) {2};
        \node[int,label=90:{$\bar A$}] (i1) at (1,0) {};
        \node[int,label=90:{$\bar A$}] (i2) at (2,0) {};
        \draw (v1) edge (i1) (i2) edge (i1) edge (v2);
        \end{tikzpicture}
    &
     \begin{tikzpicture}[baseline=-.65ex]
        \node[ext,label=90:{$A$}] (v1) at (0,0) {1};
        \node[ext,label=90:{$A$}] (v2) at (1.5,0) {2};
        \end{tikzpicture}
    \end{align*}
    where $A$ stands for forests of trees with decorations in $\bar H$ as above.
    The differential $d_s$ (from left to right in the picture) comes from the piece of \eqref{equ:edgecsplit} that cuts an edge.
    
    \item Our model for $UTM$ will be the subspace $\U\subset \Graphs_M(1)$ spanned by graphs of loop order $\leq 1$, where tadpoles are only allowed at the root. Concretely, such graphs can have the following shapes.
    \begin{align}\label{equ:Upics}
    & \begin{tikzpicture}[baseline=-.65ex]
        \node[ext,label=90:{$A$}] (v1) at (0,0) {1};
        \end{tikzpicture}  
    &
    &
    \begin{tikzpicture}[baseline=-.65ex, every loop/.style={draw,-}]
        \node[ext,label=20:{$A$}] (v1) at (0,0) {1};
        \draw (v1) edge[loop above, looseness=20, out=110,in=70] (v1);
    \end{tikzpicture}  
    &
    &
    \begin{tikzpicture}[baseline=-.65ex]
        \node[ext,label=20:{$A$}] (v1) at (0,0) {1};
        \node[int,label=0:{$\bar A$}] (i1) at (0,1) {};
        \node[int,label=-30:{$\bar A$}] (i2) at (0,2) {};
        \node[int,label=90:{$\bar A$}] (i4) at (0,4) {};
        \node[int,label=180:{$\bar A$}] (i3) at (-1,3) {};
        \node[int,label=0:{$\bar A$}] (i5) at (1,3) {};
        \draw (v1) edge (i1) (i2) edge (i1) edge (i3) edge (i5) (i4) edge (i3) edge (i5);
        \end{tikzpicture}  
    \end{align}
    Let $T \in \U$ be the cochain given by the tadpole graph and let $Y \in \U$ be the tripod graph decorated by Poincaré dual classes.
    \begin{equation}\label{equ:TYdef}
    T = \,
    \begin{tikzpicture}[baseline=-.65ex, every loop/.style={draw,-}]
        \node[ext] (v1) at (0,0) {1};
        \draw (v1) edge[loop above, looseness=20, out=110,in=70] (v1);
    \end{tikzpicture},
    \qquad
    Y = \,
    \begin{tikzpicture}[baseline=-.65ex]
        \node[ext] (v1) at (0,0) {1};
        \node[int,label=90:{$\sum_q e_q e_q^*$}] (i1) at (0,.5) {};
        \draw (v1) edge (i1);
        \end{tikzpicture} 
    \end{equation}

    In particular, there is a canonical cochain $\nu$ given by the tadpole at the root plus the tripod decorated by Poincaré dual classes, satisfying $d\nu=\chi(M)\omega$, with $\omega\in H^n(M)$ being a volume form, normalized so that $M$ has volume 1.
    \begin{equation}\label{equ:nudef}
    \nu = \,
    \begin{tikzpicture}[baseline=-.65ex, every loop/.style={draw,-}]
        \node[ext] (v1) at (0,0) {1};
        \draw (v1) edge[loop above, looseness=20, out=110,in=70] (v1);
    \end{tikzpicture}  
    \, + \, 
    \begin{tikzpicture}[baseline=-.65ex]
        \node[ext] (v1) at (0,0) {1};
        \node[int,label=90:{$\sum_q e_q e_q^*$}] (i1) at (0,.5) {};
        \draw (v1) edge (i1);
        \end{tikzpicture} 
    \end{equation}
    We can decompose our model as follows into graded vector subspaces
    $$
    \U=  A \oplus A \tp \oplus \U_\bigcirc \oplus \U_\flower,
    $$
    where the terms are as follows.
    \begin{itemize}
        \item The term $A$ corresponds to the first type of graphs in \eqref{equ:Upics}.
        \item The term $AT$ corresponds to the second type of graphs in \eqref{equ:Upics}.
        \item $\U_\bigcirc = ( B(A,A,A) \otimes_{A \otimes A} A)_{\Z_2}$ corresponds to graphs of the third kind with a "stem" of length 0.
        \item $\U_\flower = ( B(A,A,A) {}_A \otimes_{A \otimes A} B(A,A,A))_{\Z_2}$ corresponds to terms of the third kind in \eqref{equ:Upics}, with the "stem" containing at least one edge.
    \end{itemize}
    The differential contains several pieces between our subspaces above.
    \begin{itemize}
        \item There are internal pieces of the differential acting on $A$, by edge contraction and edge splitting. These terms depend on the tree part $Z_{tree}$ of the partition function of section \ref{sec:GCM}.
        \item There is a piece of the differential $d_c : \U_\flower \to \U_\bigcirc$ by contracting of the stem if it has length exactly 1.
        \item There is a piece $d_{s_p} : \U_\flower \to A$ By cutting an edge in the loop.
        \item Similarly, there is a piece $d_s: \U_\bigcirc \to A$.
        \item  Finally there is a piece $d_{s_s}: \U_\flower \to A$ by cutting an edge in the stem.
        This disconnects the loop, and "sends it to a number" by using the loop order one part $Z_1$ of the partition function \eqref{equ:Zsplit}.
    \end{itemize}
    Moreover, one has that $d\nu\in A$ is a single vertex decorated by a top form $\xi \omega$ representing the Euler class of the manifold.    
    
\end{itemize}

We let $\evd : \C \to \U$ denote the natural map obtained by restricting to the boundary.

After choosing a propagator, \cite{CamposWillwacher} construct the following maps of complexes
\begin{align*}
    A &\longrightarrow \Omega_{PA}^\bullet(M) \\
    \mathcal{C} &\longrightarrow \Omega^\bullet_{PA}(\FM_M(2))\\
    \mathcal{U} &\longrightarrow \Omega^\bullet_{PA}(UTM = \partial \FM_M(2)).
\end{align*}
They are compatible with the $A$-module structure and restriction to the boundary. Moreover, the element $\nu \in \U$ is a representative of the fiberwise volume form, such that the Thom class $Th \in A \oplus \U[1]$ is represented by $\chi(M)\omega \oplus \nu$ and a lift in the $\chi(M) = 0$ case is given by $\Th = \nu \in \U$.

\begin{Rem}
We want to think of these graph complexes as constructed similarly to the Hochschild complexes from skeleton graphs, where the Hochschild edge $B$ is replaced by the "graph edge" which can be defined as
$$
A\langle | \rangle / (|^2) \oplus A \otimes A  = B \oplus A \oplus A \otimes A
$$
and differential $d(|) = 1 + |e_i \otimes e^i|$. We can write the total differential as $d = d_\text{Hoch} + d_c + d_s$, where $d_c : B \to A$ is the counit, and $d_s(|) = |e_i \otimes e^i|$. The two extra differentials will correspond to maps between parts of a graph complex with differing skeleta. Namely, the first corresponds to contracting an edge, while the second one corresponds to splitting one.
\end{Rem}

The edge splitting map $d_s: B \to A \otimes A$ can be expressed in two steps, first splitting the Hochschild edge and adding decorations $| e_i \otimes e^i |$ and then identifying the two resulting bar complexes as part of two copies of the bar-cobar resolution $A$
$$
B \to B \otimes B \to A \otimes A.
$$
We will use the notations
\begin{align}
\label{equ:leftmult}
    B &\overset{m_l}{\longrightarrow} A \\
    x | \alpha | y &\longmapsto x (\alpha y)
\end{align}
and 
\begin{align*}
    B &\overset{m_r}{\longrightarrow} A \\
    x | \alpha | y &\longmapsto (x\alpha) y
\end{align*}
for these maps. These maps have useful commutation relations with the contracting and the splitting differential, that is, we obtain for instance
$$
[d, m_l](x| \alpha | y) = \epsilon(\alpha)xy - d_s(m_l(x | \alpha | y)),
$$
hence we obtain a homotopy between $\epsilon : B \to A$ and $d_s(m_l(x | \alpha | y)) = x m_l(\alpha' e_i) z_0(e^i \alpha'' y) = x \alpha' e_i z_0(e^i \alpha'' y)$. We will use this to obtain a homotopy between $1y$ and something that lives in the trivalent part in cases when $y$ itself is not trivalent.

\begin{Lem}
\label{lem:contrsquare}
The square
$$
\begin{tikzcd}
A \otimes A \ar[r] \ar[d] & A \ar[d] \\
\C \ar[r] & \U
\end{tikzcd}
$$
is a homotopy pushout. Equivalently, the inclusion $\op{cone}(A \otimes A \to A) \to \op{cone}(\mathcal{C} \to \U)$ is a quasi-isomorphism of $A$-bimodules.
\end{Lem}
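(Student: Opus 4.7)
The plan is to show the equivalent statement that the induced map on vertical cofibers, $\C/(A\otimes A) \to \U/A$, is a quasi-isomorphism of $A$-bimodules. Since the inclusions $A\otimes A \hookrightarrow \C$ and $A \hookrightarrow \U$ are split monomorphisms as graded $A$-bimodules (coming from the explicit direct-sum decompositions $\C = B(A,A,A)\oplus A\otimes A$ and $\U = A \oplus AT \oplus \U_\bigcirc \oplus \U_\flower$ given in the excerpt), these cofibers are computed as the complementary summands with inherited differentials: $\C/(A\otimes A) \cong B(A,A,A)$, where the piece $d_s$ of the $\C$-differential landing in $A\otimes A$ vanishes in the quotient, and $\U/A \cong \U_{\mathrm{loop}} := AT \oplus \U_\bigcirc \oplus \U_\flower$, where the pieces $d_{s_p}, d_{s_s}$ and the Euler-class contribution $d\nu \in A$, all landing in the quotiented summand, are likewise killed. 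The induced comparison $\bar\evd: B(A,A,A) \to \U_{\mathrm{loop}}$ is the restriction of the boundary map, combinatorially described as collapsing the two external vertices of a 2-point graph and interpreting the connecting bar word as a loop at the resulting single external vertex, with the two outer $A$-factors contributing either to an external decoration or to a stem.

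To prove that $\bar\evd$ is a quasi-isomorphism, I would set up an ascending filtration by bar length (the number of $\bar A$-factors in the connecting word of $B(A,A,A)$, and analogously the number of decorated internal vertices on the loop plus the stem in $\U_{\mathrm{loop}}$). The Hochschild-type pieces of the differential that strictly decrease this count, namely the multiplication of adjacent bar factors in $B(A,A,A)$ and the corresponding edge contractions in $\U_{\mathrm{loop}}$, are of strictly lower filtration degree and therefore vanish on the associated graded, leaving only the internal $A$-differentials on each decoration and the length-preserving part of $\bar\evd$. The problem then reduces, at each fixed bar length $k$, to showing that $\bar\evd$ induces a quasi-isomorphism between $A \otimes (\bar A[1])^{\otimes k} \otimes A$ and the corresponding piece of $\U_{\mathrm{loop}}$: the $k = 0$ case matches trivial bar words to the tadpole summand $AT$, while for $k \geq 1$ one matches bar words with trivial outer $A$-factors to $\U_\bigcirc$ and bar words with nontrivial outer factors to $\U_\flower$ by routing the tree data on either side of the collapsed external vertices into the stem.

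The main obstacle will be controlling the $\Z_2$-symmetries in $\U_\bigcirc$ and $\U_\flower$ (coming from loop reversal) and the stem-contraction differential $d_c : \U_\flower \to \U_\bigcirc$, which couples different stem lengths. The $\Z_2$-bookkeeping is harmless in characteristic zero since coinvariants coincide with invariants, but requires a careful sign analysis to confirm that the combinatorial map matches decorations correctly. To handle $d_c$, I would introduce a secondary filtration by stem length inside $\U_\flower$, on whose associated graded $d_c$ pairs up length-$\geq 1$ stems with their length-zero contractions, producing an acyclic complement that does not obstruct the identification of the remaining part of $\U_{\mathrm{loop}}$ with $B(A,A,A)$ via $\bar\evd$. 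Both filtrations are bounded below by zero and degreewise finite-dimensional, ensuring convergence of the corresponding spectral sequences, so that a quasi-isomorphism on the doubly graded pieces implies the desired quasi-isomorphism of the original complexes and hence the homotopy pushout property.
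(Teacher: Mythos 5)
Your reduction to showing that the induced map on vertical cofibers $\bar\evd\colon B=B(A,A,A)\to\U_{\mathrm{loop}}:=AT\oplus\U_\bigcirc\oplus\U_\flower$ is a quasi-isomorphism is correct and is exactly the first step of the paper's argument. However, the filtration argument you propose has a genuine gap. Once you pass to the associated graded of the bar-length filtration, you kill the Hochschild piece of the differential, and at bar length $k=0$ the comparison map is the multiplication $A\otimes A\to AT\cong A$, $\alpha\otimes\beta\mapsto\alpha\beta\cdot T$. With only the internal $A$-differentials remaining this is certainly \emph{not} a quasi-isomorphism: it induces $m\colon H\otimes H\to H$ on cohomology, which is surjective but has a large kernel. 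The Hochschild differential (lowering bar length by one) is precisely the structure that makes $B\to A$ a quasi-isomorphism — it is the bar resolution — so a filtration that discards it cannot work at the $E_0$ or $E_1$ page as you claim. There is a second, independent issue: the map $\bar\evd$ does not in fact hit $\U_\flower$. Collapsing the two external vertices of a length-$k$ graph in $B$ produces a single external vertex with a length-$k$ loop attached at it (a tadpole when $k=0$, an element of $\U_\bigcirc$ when $k\geq 1$), but never a stem; so there is no matching of ``bar words with nontrivial outer factors'' to $\U_\flower$. The $\U_\flower$ summand must be dealt with on its own (your secondary filtration by stem length does contain the right idea for this part, but it is not enough to rescue the primary argument).

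The paper's proof avoids these problems by not attempting a piece-by-piece matching at all. Instead it observes the short exact sequence of complexes (with $\U_\bigcirc\oplus\U_\flower$ a subcomplex of the cokernel cone, since the internal differentials $d_c\colon\U_\flower\to\U_\bigcirc$ etc.\ preserve it and nothing maps out of it into $B[1]$ or $AT$) whose two outer terms are
$\U_\bigcirc\oplus\U_\flower$ and $\op{cone}(B\to AT)$, and checks that each is contractible: $\op{cone}(B\to AT)$ because $B\to A$ is the two-sided bar resolution, and $\U_\bigcirc\oplus\U_\flower$ by an explicit contraction using deconcatenation together with the interval action (written out as the ``orange homotopy'' in the subsequent section). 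This keeps the crucial Hochschild differential in play rather than filtering it away, and it completely sidesteps the need to account for $\U_\flower$ in the image of $\bar\evd$.
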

\begin{proof}
The map being an inclusion, it suffices to show that the cokernel
$$
\op{cone}( B \to A\tp \oplus \U_\bigcirc \oplus \U_\flower))
$$
is contractible. We consider the short exact sequence
$$
\begin{tikzcd}
\op{cone}( B \to A\tp) \ar{r} & \op{cone}( B \to A\tp \oplus \U_\bigcirc \oplus \U_\flower) \ar{r} & (\U_\bigcirc \oplus \U_\flower)[1]
\end{tikzcd}
$$
and note that the outer terms are contractible.
\end{proof}

The following follows directly from \cite{CamposWillwacher}.
\begin{Prop}\label{prop:cube}
The following cube commutes and all the vertical maps are quasi-isomorphisms.
$$
\begin{tikzcd}[back line/.style={densely dotted}, row sep=3em, column sep=3em]
& A \otimes A \ar{dl} \ar{rr} \ar{dd}[near end]{\simeq} 
  & & A \ar{dd}{\simeq} \ar{dl} \\
\C \ar[crossing over]{rr} \ar{dd}{\simeq} & & \U \\
& \Omega_{PA}^\bullet(M \times M) \ar{rr} \ar{dl} & & \Omega_{PA}^\bullet(M) \ar{dl} \\
\Omega_{PA}^\bullet(\FM_M(2)) \ar{rr} & & \Omega_{PA}^\bullet(UTM) \ar[crossing over, leftarrow]{uu}[near start]{\simeq}
\end{tikzcd}
$$
In other words, the upper face of the cube is indeed a model for the square \eqref{equ:bdry_inclusion} as desired.
\end{Prop}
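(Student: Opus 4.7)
The plan is to trace through the construction of Campos--Willwacher \cite{CamposWillwacher} and verify four pieces of data: commutativity of the top graphical face, commutativity of the bottom differential-forms face, commutativity of the four side (vertical) faces, and the quasi-isomorphism property for the three vertical arrows.

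The first step is to recall that the three vertical arrows are defined in \cite{CamposWillwacher} by integration over compactified configuration spaces with a chosen propagator: for each graph $\Gamma$ one pulls the propagator back along every edge, pulls the cohomology decorations back along the vertex projections, and integrates along the fiber over the external configuration. Since our complexes $A$, $\C$, $\U$ are sub/quotient complexes of the full Campos--Willwacher complexes---obtained by restricting to trees (resp.\ graphs of loop order $\leq 1$ with tadpoles only at the root)---the integration maps restrict coherently and produce honest maps of $A$-(bi)modules into the appropriate spaces of forms.

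Next I would check commutativity of the cube face by face. The bottom face commutes tautologically, since all four arrows are pullback of forms along the compositions $UTM \hookrightarrow \FM_M(2)\to M\times M$ and $M\xrightarrow{\Delta}M\times M$ that agree by~\eqref{equ:bdry_inclusion}. The top face commutes by definition: the horizontal map $\C\to\U$ is the boundary evaluation $\evd$, and the left-hand arrows $A\otimes A\to\C$ and $A\to\U$ are the natural inclusions of graphs without internal vertices or propagating edges, so both composites $A\otimes A\to\U$ coincide with the multiplication followed by the unit-inclusion. The four side faces commute by naturality of the Campos--Willwacher integration with respect to both the operadic action of the framed little disks for the boundary inclusion $UTM = \FM_M(1)\times_M \FM_n^M(2)\to\FM_M(2)$ of section~\ref{sec:FM} and pullback along the diagonal; this compatibility is essentially the main output of \cite{CamposWillwacher} in the arity-$\leq 2$ range.

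The main obstacle is the quasi-isomorphism assertion for the restricted/tree-truncated models. For $A$ one uses that the tree part of ${}^*\Graphs_M(1)$ is the bar-cobar resolution of the $\Com_\infty$-algebra $H^\bullet(M)$, hence is quasi-isomorphic to $\Omega(M)$. For $\C$, the decomposition $\C=B(A,A,A)\oplus A\otimes A$ with the differential $d_s$ yields a Lambrechts--Stanley-type two-point model, whose quasi-isomorphism with $\Omega^\bullet(\FM_M(2))$ follows from a loop-order spectral sequence on the full $\Graphs_M(2)$ complex whose $E_1$-page is exactly $\C$. For $\U$, the decomposition $\U = A \oplus A\tp \oplus \U_\bigcirc \oplus \U_\flower$ with its four pieces of differential realizes the Gysin long exact sequence for the unit tangent bundle $UTM\to M$, with $\nu$ from~\eqref{equ:nudef} playing the role of a global angular form and $d\nu$ representing the Euler class; a filtration by stem length then identifies $H(\U)$ with $H(UTM)$. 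With (a)--(d) in place, the upper face of the cube is a model for the square~\eqref{equ:bdry_inclusion} as desired.
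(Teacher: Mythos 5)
The paper's proof is considerably shorter and takes a different route: it invokes Lemma~\ref{lem:contrsquare} (the top face is a homotopy pushout, so $\op{cone}(A\otimes A\to A)\to\op{cone}(\C\to\U)$ is a quasi-isomorphism of $A$-bimodules), observes that the back vertical arrows $A\otimes A\to\Omega^\bullet(M\times M)$ and $A\to\Omega^\bullet(M)$ are already quasi-isomorphisms from \cite{CamposWillwacher}, and then reduces everything to showing that the induced map $\op{cone}(A\otimes A\to\C)\to\op{cone}(\Omega^\bullet(M\times M)\to\Omega^\bullet(\FM_M(2)))$ is a quasi-isomorphism. This last step is handled in one line via the Thom isomorphism: both cones have cohomology a free $H^\bullet(M)$-module on the Thom class, and the map sends Thom class to Thom class by construction. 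The remaining vertical arrow $\U\to\Omega^\bullet(UTM)$ then follows from the homotopy-pushout structure of both the top and bottom faces.

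Your proposal instead tries to prove each of the three vertical arrows is a quasi-isomorphism directly, arrow by arrow. This is a genuinely different route, and I think it is a reasonable thing to attempt, but as written there is a real gap in the argument for $\C$. You assert that the quasi-isomorphism $\C\to\Omega^\bullet(\FM_M(2))$ "follows from a loop-order spectral sequence on the full $\Graphs_M(2)$ complex whose $E_1$-page is exactly $\C$." Two problems: first, the $E_1$-page of a filtration by loop order is the cohomology of the associated graded, not the subcomplex $\C$ itself, so the phrasing does not parse; second, and more seriously, what you would actually need is that the associated graded in positive loop order is acyclic (so the spectral sequence degenerates onto the loop-order-zero part), and that is far from obvious — graph complexes in fixed positive loop order generally have rich cohomology. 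This is exactly the point the paper's argument avoids having to confront: by passing to the cone over the $A\otimes A$ inclusion, one replaces $\C$ with (something quasi-isomorphic to) the bar complex $B(A,A,A)$, whose cohomology is computed by the Thom isomorphism. Your approach for $\U$ via the Gysin sequence and a filtration by stem length is plausible but also left at the level of a sketch; the paper's route gets it for free once $\C$ is handled, using Lemma~\ref{lem:contrsquare}. The commutativity discussion in your proposal is fine and essentially matches the paper's one-line "commutes by construction." In short: to make your argument for $\C$ rigorous you would need to supply the acyclicity of the positive-loop-order graded piece, or switch to the paper's Thom-class argument.
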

\begin{proof}
The diagram commutes by construction. Since the vertical maps on the back face are quasi-isomorphisms, and the previous lemma, it is enough to show that the map $cone(A \otimes A \to \C) \to cone(\Omega^\bullet(M\times M) \to \Omega^\bullet(\FM_M(2)))$ is a quasi-isomorphism. This is clear, since by the Thom isomorphism both have cohomology a free $H$-module with generator given by the Thom class, and the vertical map respects these Thom classes by construction.
\end{proof}

We recall that for the construction of the coproduct in the case $\chi(M) = 0$ we used the map (in the derived category) of $A$-bimodules
$$
A \overset{\wedge \nu}{\longrightarrow} \Omega_{PA}^\bullet(UTM) \longrightarrow \op{cone}(\Omega_{PA}^\bullet(\FM_M(2)) \to \Omega_{PA}^\bullet(UTM)) \longleftarrow \op{cone}( A \otimes A \to A),
$$
where $\nu$ is a (in this case closed) fiberwise volume form. The above proposition allows us to replace this map with
$$
A \overset{\wedge \nu}{\longrightarrow} \U \longrightarrow \op{cone}(\C \to \U) \longleftarrow \op{cone}( A \otimes A \to A).
$$
It follows directly from the proposition that the second arrow is a quasi-isomorphism and hence the map is well-defined. More concretely, after tensoring everything from both sides with $B$, we obtain a map
$$
B \otimes_A B \longrightarrow \op{cone}(A \otimes A \to A),
$$
defined by the requirement that the diagram
$$
\begin{tikzcd}
B \otimes_A B \ar[r, "g"] \ar[dr, "\wedge Th"', ""{name=g}] & A\otimes A / A \ar[d] \arrow[Rightarrow, from=g, ""'] \\
& \C / \U.
\end{tikzcd}
$$
commutes up to homotopy. We seek to compute the map $g$ explicitly. For this we essentially spell out the formulas implicit in the proof of Lemma \ref{lem:contrsquare}.

In the case of the reduced coproduct we note that the above proposition also direcly implies the existence of a homotopy commuting diagram
$$
\begin{tikzcd}
QA[-n] \ar[r, "g"] \ar[dr, "\wedge Th"', ""{name=g}] & A\otimes A \ar[d] \arrow[Rightarrow, from=g, "h"'] \\
& A,
\end{tikzcd}
$$
with $QA:=B\otimes_A B$.
Recall that in this case the homotopy is extra data (and not a property of $g$) that appears in the description of the coproduct. Again, we will obtain formulas by spelling out the contracting homotopy of the upper face of the cube in Proposition \ref{prop:cube} at least for the image of the map $\wedge Th$.

\subsection{A contracting homotopy}
In this section we will produce an explicit contracting homotopy of (the total complex of) the diagram in Lemma \ref{lem:contrsquare}. Using the explicit description of $\C$ and $\U$ we can write the square as
\begin{equation}\label{equ:bunt1}
\begin{tikzcd}
{\color{blue} A \otimes A} \ar[r] \ar[d] & {\color{teal} A} \ar[d] \\
\C={\color{purple} B} \oplus {\color{blue} A \otimes A} \ar[r] & \U = {\color{teal} A} \oplus {\color{purple} A\tp} \oplus {\color{orange} \U_\bigcirc \oplus \U_\flower }
\end{tikzcd}
\end{equation}
where same-colored elements correspond to contractible subquotient complexes. Our strategy is to write down a contracting homotopy $h_0$ for each of these complexes and then get an overall contracting homotopy $H$ by the perturbation lemma.
Since we want to construct a contracting homotopy in $A$-bimodules we will take first tensor the diagram with $B$ over $A$ form the left and from the right to obtain
\begin{equation}\label{equ:bunt2}
\begin{tikzcd}
{\color{blue} B \otimes B} \ar[r] \ar[d] & {\color{teal} B\otimes_A B} \ar[d] \\
{\color{purple} B\otimes_A B\otimes_A B} \oplus {\color{blue} B\otimes B} \ar[r] & {\color{teal} B\otimes_A B} \oplus {\color{purple} B\otimes_A (A\tp) \otimes_A B} \oplus {\color{orange} B\otimes_A (\U_\bigcirc \oplus \U_\flower)\otimes_A B }
\end{tikzcd}
\end{equation}

We now describe the components of the contracting homotopy $h_0$, that is contracting homotopy of each of the colored subquotient complexes (tensored with $B \otimes B$ if our homotopy makes use of it).
\begin{itemize}
    \item ${\color{purple} B \otimes_A B \otimes_A B \to B \otimes_A B}$. The maps
    \begin{align*}
    B \otimes_A B &\longrightarrow B \otimes_A B \otimes_A B \\
    \alpha |x| \beta &\longmapsto \alpha | x | \beta' | 1 | \beta''
    \end{align*}
    and
    \begin{align*}
    B \otimes_A B \otimes_A B &\longrightarrow B \otimes_A B \otimes_A B \\
    \alpha |x| \beta |y| \gamma  &\longmapsto \alpha | x | \beta y \gamma' | 1 | \gamma''
    \end{align*}
    define a strict (the homotopies commute with the maps) homotopy inverse to $B \otimes_A B \otimes_A B \to B \otimes_A B$ where the second component is the action of the fundamental cycle of the interval via the reparametrization map (see \ref{sec:splittingmap}).
    Thus the contracting homotopy $h_0: B \otimes_A B \otimes_A B  \oplus  B \otimes_A B[1]$ has two components given by the formulas above.

    \item ${\color{blue} B \otimes B \overset{\op{id}}{\to} B \otimes B}$. We could simply chose the identity to be our homotopy. However, since we want to obtain a formula that lives in trivalent graphs (i.e. all $Com_\infty$-multiplications have been carried out), we choose a slighty more complicated contracting homotopy, namely we define
    \begin{align*}
    m : B \otimes B &\longrightarrow (B \otimes B)[1]\\
    \alpha \otimes \beta &\longmapsto \alpha' m_l(\alpha'') \otimes \beta [1],
    \end{align*}
    which is a degree $-2$ map on $\op{cone}( B \otimes B \to B \otimes B)$, that is we map $(u,v) \mapsto (m(v), 0)$,
    where $u \in B \otimes B$ and $v \in B \otimes B[1]$. We then take $h_0 = \op{id}[1] + [d, m]$ as our contracting homotopy, where $\op{id}[1]$ is the canonical contraction. Thus $h_0(u,v) = (mu + (1 -[d_{B\otimes B},m])(v), mv)$. Similarly to the discussion in \eqref{equ:leftmult} we obtain that $m$ gives a homotopy between the identity and the map
    $$
    (\alpha \otimes \beta) \longmapsto (\alpha'|\alpha''e_i) z_0(e^i\alpha''') \otimes \beta,
    $$
    which we will \todo{or won't we} abbreviate to $(\alpha \otimes \beta) \mapsto (\alpha'|d_s(\alpha'')) \otimes \beta$.
    Thus the homotopy $h_0$ is given by the formula
    $$
    h_0( \alpha_1 \otimes \beta_1 , \alpha_2 \otimes \beta_2) = (m(\alpha_1 \otimes \beta_1) + (\alpha_2'| d_s( \alpha_2'')) \otimes \beta_2 , m(\alpha_2\otimes \beta_2) ).
    $$
    \item ${\color{orange} \U_\bigcirc \to \U_\flower}$. We identify $\U_\bigcirc$ with $B^{\geq 1}_{\Z_2} \otimes_{A^{\otimes 2}} A$ and $\U_\flower$ with $B {}_A \otimes_{A^{\otimes 2}} B^{\geq 1}_{\Z_2}$.
    Similarly to above we get a contracting homotopy using deconcatenation and the interval action, namely it is given by components
    \begin{align*}
        h_0: B^{\geq 1} \otimes_{A^{\otimes 2}} A &\longrightarrow B {}_A \otimes_{A^{\otimes 2}} B^{\geq 1} \\
        \beta|x &\longmapsto \pm x|\beta' \Sha (\beta''')^* |1| \beta''.
    \end{align*}
    and
    \begin{align*}
        h_0: B {}_A \otimes_{A^{\otimes 2}} B^{\geq 1} &\longrightarrow B {}_A \otimes_{A^{\otimes 2}} B^{\geq 1} \\
        \alpha|x|\beta &\longmapsto \pm\alpha x \beta' \Sha (\beta''')^* |1| \beta'',
    \end{align*}
    which both descend to $\Z_2$-coinvariants since the shuffle product is commutative.
    \todo[inline]{It feels like this actually depends on what the sign of the reflection is.....and I failed to properly check this.
    Seems to be true.}
    
    \item ${\color{teal} B \otimes_A B \overset{\op{id}}{\to} B \otimes_A B}$. Similarly to the blue homotopy, we want to "carry out one $Com_\infty$-multiplication". We again twist the canonical contracting homotopy by the map 
    \begin{align*}
        m_m : B \otimes_A B &\longrightarrow B \otimes_A B  \\
        \alpha|x|\beta &\longmapsto \alpha' m_l(\alpha'' \Sha (\beta')^*|x) \beta'',
    \end{align*}
    to obtain the homotopy
    $$
    h_0(u,v) = (m_mu + (1 -[d_{B\otimes B},m_m])(v), m_mv),
    $$
    where
    $$
    (1 -[d_{B\otimes B},m_m])(\alpha|x|\beta) = \pm z_0((\alpha'' \Sha \beta') e^i)\alpha'| (\alpha'' \Sha (\beta'')^* e_i) | \beta'''.
    $$
    \todo[inline]{This is again very "suggestive notation". Maybe there is a better way to write these.}
\end{itemize}
Let us note that the only parts of the differential disregarded in the homotopy $h_0$ are those from the horizontal maps in our diagram and all the "splitting differentials" by cutting edges in graphs. Thus decomposing the differential of the total complex into $d = d_0 + d_1$, where now $[d_0,h_0] = \op{id}$, we obtain the contracting homotopy $H$ by
$$
H = h_0 + h_0 d_1 h_0 + h_0 d_1 h_0 d_1 h_0 + \dots,
$$
where one checks that the sum is finite, namely
$$
H = h_0 + h_0 d_1 h_0 + h_0 d_1 h_0 d_1 h_0 + h_0 d_1 h_0 d_1 h_0 d_1 h_0.
$$
Let us universally denote by $\pi:B\otimes_A (-) \otimes_A B \to (-)$ the projection undoing the tensor products with $B$ that we introduced above. 
Then we decompose $\pi \circ H = H_{A\otimes A} + H_{A} + H_{\C} + H_{\U}$ according to the target space. For instance $H_\C : A \oplus A\otimes A \oplus \C \oplus \U \to \C$ (with degree shifts ignored).

\subsection{Explicit formulas for the homotopies of Lemmas \ref{lem:hocop}, \ref{lem:frcop}}
Recall that for the $\chi(M)=0$-case of the coproduct we need to produce a map $g: QA[-d] \to A \otimes A / A$ (with $QA:=B\otimes_A B$ our chosen cofibrant resolution of the $A$-bimodule $A$) that makes the diagram
$$
\begin{tikzcd}
B \otimes_A B \ar[r, "g"] \ar[dr, "\wedge Th"', ""{name=g}] & A\otimes A / A \ar[d] \arrow[Rightarrow, from=g, ""'] \\
& \C / \U.
\end{tikzcd}
$$
homotopy commute, see Lemma \ref{lem:frcop}. Using the homotopy $H$ from the previous subsection we can choose
\begin{align*}
g &= (H_{A\otimes A} + H_{A}) \circ (\wedge Th).
\end{align*}

For the string coproduct in the reduced case (i.e., on $H(LM,M)$) we need to find $g$ and $h$ such that
\begin{equation}
\begin{tikzcd}
QA[-n] \ar[d] \ar[r, "g"] & (A \otimes A) / \C \ar[d]  \\
A[-n] \ar[r, "\wedge Th"] \ar[Rightarrow, ur, "h"] & A / \U,
\end{tikzcd}
\end{equation}
is a homotopy commuting square. We choose
\begin{align*}
g &= (H_{A \otimes A} + H_\C) \circ (\wedge Th) \\
h &= (H_A + H_\U ) \circ (\wedge Th)
\end{align*}
Composing with the projections $(A\otimes A)/\C\to A\otimes A$ and $A/\U\to A$ we obtain the diagram
$$
\begin{tikzcd}[column sep=huge,row sep=large]
B \otimes_A B[-n] \ar[r, "H_{A\otimes A} \circ (\wedge Th)"] \ar[d] & A \otimes A \ar[d]\\
A[-n] \ar[Rightarrow, ur, "H_A \circ (\wedge Th)"] \ar[r, "\wedge \chi(M)\omega"] & A.
\end{tikzcd}
$$

Thus in both cases ($\chi(M)=0$ or working modulo constant loops) it remains to compute
\begin{align*}
&H_{A\otimes A} \circ (\wedge Th)& &\text{and}
& & H_{A} \circ (\wedge Th).
\end{align*}
We note that $\Th = T + Y + \chi(M)\omega$ has three components, with $T\in \U$ the tadpole graph (first term in \eqref{equ:nudef}), $Y\in \U$ the second term in \eqref{equ:nudef} and $\omega\in H^n(M)\subset A$ the top dimensional cohomology class, normalized so that $M$ has volume 1.
The major contribution to $g,h$ above comes from the piece $\tp\wedge$, it image lies in the red summand of the lower right corner of \eqref{equ:bunt2}. On this summand, the homotopy $H$ is nontrivial, and we shall evaluate it now.

\subsubsection{Image of the tadpole graph}
We evaluate the formulas for $H_{A\otimes A} \circ (\wedge T)$ and $H_{A} \circ (\wedge T)$ step by step on a typical element
$$
\alpha | x | \beta = 1|\alpha_1 \alpha_2 \ldots \alpha_k| x | \beta_1 \beta_2 \ldots \beta_l | 1 \in A \otimes \bar{A}[1]^{\otimes k} \otimes A \otimes \bar{A}[1]^{\otimes l}\otimes A \subset B \otimes_A B.
$$
Note that this is enough since $H$ is a map of $A$-bimodules.
$$
\alpha | x | \beta =
\begin{tikzpicture}[scale = 2]
\filldraw (-1,0) circle (0.05);
\filldraw (0,0) circle (0.05);
\filldraw (1,0) circle (0.05);
\draw [thick] (-1,0) --(1,0);
\draw (-0.8,0) --(-0.8,0.1) node[above]{$\alpha_1$};
\draw (-0.6,0) --(-0.6,0.1) node[above]{$\alpha_2$};
\draw (-0.6,0) to node[above]{$\ldots$}(-0.2,0) ;
\draw (-0.2,0) --(-0.2,0.1) node[above]{$\alpha_k$};
\draw (0,0) --(0,0.2) node[above]{$x$};
\draw (0.2,0) --(0.2,0.1) node[above]{$\beta_1$};
\draw (0.4,0) --(0.4,0.1) node[above]{$\beta_2$};
\draw (0.4,0) to node[above]{$\ldots$} (0.8,0);
\draw (0.8,0) --(0.8,0.1) node[above]{$\beta_k$};
\end{tikzpicture}
\quad=\quad
\begin{tikzpicture}[scale = 2]
\filldraw (-1,0) circle (0.05);
\filldraw (0,0) circle (0.05);
\filldraw (1,0) circle (0.05);
\draw [thick] (-1,0) --(1,0);
\draw (0,0) --(0,0.2) node[above]{$x$};
\draw [thick] (-1,0) to node[above]{$\alpha$} (0,0);
\draw [thick] (0,0) to node[above]{$\beta$} (1,0);
\end{tikzpicture}
$$
The terms of $H(\alpha | x| \beta \wedge T)$ are obtained by iteratively applying $h_0$ and $d_1$,
\begin{align*}
    h_0( \alpha | x | \beta \wedge T) &= \Psi_1^\C \\
    d_1 h_0( \alpha | x | \beta \wedge T) &= \Phi_1^\C + \Phi_1^\U \\
    h_0 d_1 h_0( \alpha | x | \beta \wedge T) &= \Psi_2^{A \otimes A} + \Psi_2^\C + \Psi_2^\U \\
    d_1 h_0 d_1 h_0( \alpha | x | \beta \wedge T) &= \Phi_2^{A} + \Phi_2^\U \\
    h_0 d_1 h_0 d_1 h_0( \alpha | x | \beta \wedge T) &= \Psi_3^{A} + \Psi_3^\U \\
    d_1 h_0 d_1 h_0 d_1 h_0( \alpha | x | \beta \wedge T) &= 0,
\end{align*}
where we decomposed the images according to which corner of the square they lie in. In the following we compute each term and show that the missing components are zero. Using this notation we have
\begin{align*}
    H_{A\otimes A}(\alpha | x| \beta \wedge T) &= \pi \Psi_2^{A\otimes A} \\
    H_{A}(\alpha | x| \beta \wedge T) &=  \pi \Psi_3^A
\end{align*}

The term $\Psi_1^\C$ is obtained by applying the {\color{purple}purple homotopy}, that is we obtain 
$$
\Psi_1^\C = \alpha | x | \beta' | 1 | \beta'' \in {\color{purple}B \otimes_A B \otimes_A B} \subset B \otimes_A \mathcal{C} \otimes_A B
$$
$$
\Psi_1^\C = 
\begin{tikzpicture}[scale = 2]
\filldraw (-1,0) circle (0.05);
\filldraw (0,0) circle (0.05);
\filldraw (1,0) circle (0.05);
\filldraw (2,0) circle (0.05);
\draw [thick] (-1,0) to node[above]{$\alpha$}(0,0);
\draw (0,0) to node[above]{$\beta'$}(1,0);
\draw [thick] (1,0) to node[above]{$\beta''$}(2,0);
\draw (1,0) --(1,0.2) node[above]{$1$};
\draw (0,0) --(0,0.2) node[above]{$x$};
\end{tikzpicture}
\quad=\quad
\begin{tikzpicture}[scale = 2]
\filldraw (-1,0) circle (0.05);
\filldraw (0,0) circle (0.05);
\filldraw (1,0) circle (0.05);
\filldraw (2,0) circle (0.05);
\draw [thick] (-1,0) to node[above]{$\alpha$}(0,0);
\draw (0,0) to node[above]{$\beta'$}(1,0);
\draw [thick] (1,0) to node[above]{$\beta''$}(2,0);
\draw (0,0) --(0,0.2) node[above]{$x$};
\end{tikzpicture}
$$
where the middle edge is drawn in a different manner to remember that we think of it as a "graph edge" and not a "Hochschild edge", i.e. there is a splitting differential.
Applying $d_1$ gives two components, one coming from splitting the middle edge and one from the horizontal map $\evd:\C \to \U$. We write
$$ d_1 \Psi_1^\C = {\color{blue} \Phi_1^\C} +  {\color{orange} \Phi_1^\U},$$
where
$$
\Phi_1^\C = {\color{blue} d_s \Phi_2} = \alpha|x\beta' e_q) \otimes (e_q^* \beta''|1|\beta''') \in B \otimes_A \C \otimes_A B
$$
$$
{\color{blue} d_s \Phi_2} = 
\begin{tikzpicture}[scale = 2]
\filldraw (-1,0) circle (0.05);
\filldraw (0,0) circle (0.05);
\filldraw (1.8,0) circle (0.05);
\filldraw (2.8,0) circle (0.05);
\draw [thick] (-1,0) to node[above]{$\alpha$}(0,0);
\draw [thick] (1.8,0) to node[above]{$\beta'''$}(2.8,0);
\draw (0,0) to node[above]{$\beta'$}(0.6,0.2) node[right]{$\omega_i$};
\draw (1/4*0.6, 1/4*0.2) to (1/4*0.6 - 0.2/10, 1/4*0.2 + 0.6/10);
\draw (2/4*0.6, 2/4*0.2) to (2/4*0.6 - 0.2/10, 2/4*0.2 + 0.6/10);
\draw (3/4*0.6, 3/4*0.2) to (3/4*0.6 - 0.2/10, 3/4*0.2 + 0.6/10);
\draw (1.8,0) to node[above]{$\beta''$} (1.2,0.2) node[left]{$\omega^i$};
\draw (1.8-1/4*0.6, 1/4*0.2) to (1.8-1/4*0.6 + 0.2/10, 1/4*0.2 + 0.6/10);
\draw (1.8-2/4*0.6, 2/4*0.2) to (1.8-2/4*0.6 + 0.2/10, 2/4*0.2 + 0.6/10);
\draw (1.8-3/4*0.6, 3/4*0.2) to (1.8-3/4*0.6 + 0.2/10, 3/4*0.2 + 0.6/10);
\draw (0,0) --(0,0.2) node[above]{$x$};
\end{tikzpicture}
\quad=\quad
\begin{tikzpicture}[scale = 2]
\filldraw (-1,0) circle (0.05);
\filldraw (0,0) circle (0.05);
\filldraw (1.8,0) circle (0.05);
\filldraw (2.8,0) circle (0.05);
\draw [thick] (-1,0) to node[above]{$\alpha$}(0,0);
\draw [thick] (1.8,0) to node[above]{$\beta'''$}(2.8,0);
\draw (0,0) to node[above]{$\beta'$}(0.6,0.2);
\draw (1.8,0) to node[above]{$\beta''$} (1.2,0.2);
\draw (0,0) --(0,0.2) node[above]{$x$};
\draw [dotted](0.6,0.2) to [bend left = 18] (1.2,0.2);
\end{tikzpicture}
$$
and 
$$
\Phi_1^\U = \evd (\Phi_1).
$$

To obtain the $\Psi_2$'s we have to apply the blue homotopy to $\Phi_1^\C$ and the orange homotopy to $\Phi_1^\U$
Then $\Psi_2^{A \otimes A} + \Psi_2^\C = h_0 {\color{blue} \Psi_1^\C}$ are the components after applying the blue homotopy. That is
$$
\Psi_2^{A \otimes A} = z_0(e^i \alpha''' x \beta' e_j) (\alpha'|\alpha'' e_i) \otimes (e^j \beta''| \beta''') \in {\color{blue} B \otimes B}
$$
$$ 
\Psi_2^{A \otimes A} \quad=\quad 
\begin{tikzpicture}[scale = 2, baseline={([yshift=-50pt]current bounding box.north)}]
\draw [dotted] (-.8,0) to (-.5,0);
\draw (-.5,0) to node[above]{$\alpha'''$} (0,0);
\draw (0,0) to node[above]{$\beta'$} (.5,0);
\draw (0,0) to (0, 0.2) node[above]{$x$};
\draw [dotted] (.5,0) to (.8,0);
\draw [dashed] (0,0.2) circle (0.7);
\draw (0,0.85) node[above]{$z_0$};
\draw (-1.1,0) to node[above]{$\alpha''$}(-.8,0);
\draw [thick](-1.6,0) to node[above]{$\alpha'$}(-1.1,0);
\filldraw (-1.6,0) circle (0.05);
\filldraw (-1.1,0) circle (0.05);
\draw (.8,0) to node[above]{$\beta''$}(1.1,0);
\draw [thick](1.1,0) to node[above]{$\beta'''$}(1.6,0);
\filldraw (1.6,0) circle (0.05);
\filldraw (1.1,0) circle (0.05);
\end{tikzpicture}
$$
and
$$
\Psi_2^\C = \alpha' | \alpha ''x d_s'(\beta') \otimes d_s''(\beta') | 1 | \beta'' \in B \otimes B \subset B \otimes_A \C \otimes_A B
$$
$$
\Psi_2^\C \quad=\quad 
\begin{tikzpicture}[scale = 2]
\filldraw (-1,0) circle (0.05);
\filldraw (-.5,0) circle (0.05);
\filldraw (1.8,0) circle (0.05);
\filldraw (2.3,0) circle (0.05);
\draw [thick] (-1,0) to node[above]{$\alpha'$}(-.5,0);
\draw [thick] (1.8,0) to node[above]{$\beta'''$}(2.3,0);
\draw (-.5,0) to node[above]{$\alpha''$}(0.05,0.1);
\draw (0.05,0.1) to node[above]{$\beta'$}(0.6,0.2);
\draw (-.5 + 4/8*1.1, 4/8*0.2) to node[above]{$x$}((-.5 + 4/8*1.1 - 0.2/10, 4/8*0.2 + 1.1/10);
\draw (1.8,0) to node[above]{$\beta''$} (1.2,0.2);
\draw [dotted](0.6,0.2) to [bend left = 10] (1.2,0.2);
\end{tikzpicture}
$$

The term $\Psi_2^\U$ obtained by applying the orange homotopy to $\Phi_1^\U$ and hence
$$
\Psi_2^\U = \alpha | x | \beta' \Sha (\beta''')^* |1| \beta'' |1| \beta'''' \in B \otimes_A \U \otimes_A B.
$$
$$
\Psi_2^\U = 
\begin{tikzpicture}[scale = 2]
\filldraw (-.5,0) circle (0.05);
\filldraw (.5,0) circle (0.05);
\draw [thick] (-.5,0) to node[above]{$\alpha$} (0,0);
\draw [thick] (0,0) to node[above]{$\beta''''$} (.5,0);
\draw (0,0) to node[left]{$\beta'$} node[right]{$\beta'''$} (0,1);
\draw (0,1.5) circle  (0.5) ;
\draw (0.2,2) node[right]{$\beta'' > 0$};
\draw (0,0) to (-0.2, 0.3) node[left]{$x$};
\end{tikzpicture}
$$
Here $\beta'' > 0$ denotes the condition that $\beta''$ contains at least 1 element of $A$, that is we apply the reduced coproduct on this factor.

The term $\Phi_2^A$ is the image of $\Psi_2^{A\otimes A}$ under the horizontal multiplication map, that is
$$
\Phi_2^A = z_0(e^i \alpha''' x \beta' e_j) (\alpha'|(\alpha'' e_i)(e^j \beta'')| \beta''') \in {\color{blue} B \otimes_A B}
$$
$$
\Phi_2^A \quad = \quad
\begin{tikzpicture}[scale = 2, baseline=-.65ex]
\filldraw (-.5,-.5) circle (0.05);
\draw [thick] (-.5,-.5) to node[below]{$\alpha$} (0,-.5);
\filldraw (.5,-.5) circle (0.05);
\draw [thick] (0,-.5) to node[below]{$\beta$}(.5,-.5);
\filldraw (0,-.5) circle (0.05);
\draw (0,-.5) to node[left]{$\alpha$} (-0.3,0.1);
\draw (0,-.5) to node[right]{$\beta$} (0.3,0.1);
\draw[dotted] (-0.3,0.1) to (-0.4, 0.3);
\draw[dotted] (0.3,0.1) to (0.4, 0.3);
\draw (-.4,.3) to [out=120, in=180] node[below left]{$\alpha$} (0,1.0);
\draw (.4,.3) to [out=60, in=0] node[below right]{$\beta$} (0,1.0);
\draw (0,1) to (0,1.2) node[above]{$x$};
\draw [dashed] (0,0.8) ellipse [x radius = .7, y radius = .7];
\draw (0,1.5) node[above]{$z_0$};
\end{tikzpicture}
$$

The two components of the term $\Phi_2^\U = \evd(\Psi_2^\C) + d_s \Psi_2^\U$ are given by
$$
\evd{\Psi_2^\C} = \alpha'| (\alpha''x\beta'e_i)(e^i \beta'')| \beta'''
$$
$$
\evd(\Psi_2^\C) \quad = \quad
\begin{tikzpicture}[scale = 2, baseline={([yshift=-50pt]current bounding box.north)}]
\filldraw (-.5,0) circle (0.05);
\draw [thick] (-.5,0) to node[below]{$\alpha$} (0,0);
\filldraw (.5,0) circle (0.05);
\draw [thick] (0,0) to node[below]{$\beta$}(.5,0);
\filldraw (0,0) circle (0.05);
\draw (0,0) to [out=135, in=-90] node[below left]{$\alpha$} (-1,1);
\draw (-1,1) to [out=90, in=180] node[above left]{$\beta$} (-.2,2);
\draw (-1,1) --(-1.2,1) node[left] {x};
\draw [dotted] (-.2,2) to (.2,2);
\draw (0,0) to [out=45, in=-90] (1,1) node[right]{$\beta$};
\draw (1,1) to [out=90, in=0] (.2,2);
\end{tikzpicture}
$$
and
$$
d_s \Psi_2^\U \quad = \quad
\begin{tikzpicture}[baseline=-.65ex,scale = 2]
\filldraw (-.5,0) circle (0.05);
\draw [thick] (-.5,0) to node[below]{$\alpha$} (0,0);
\filldraw (.5,0) circle (0.05);
\draw [thick] (0,0) to node[below]{$\beta$}(.5,0);
\filldraw (0,0) circle (0.05);
\draw (0,0) to (-.4,0.4) node[above left]{$x$};
\draw (0,0) to node[left]{$\beta$} node[right]{$\beta$} (0,.4);
\draw [dotted](0,0.4) to (0,.6);
\draw (0,0.6) to node[left]{$\beta$} node[right]{$\beta$} (0,1);
\draw (0,1.5) circle (0.5);
\draw (.4, 1.8) node[right]{$\beta > 0$};
\draw [dashed] (0,1.3) ellipse [x radius = .7, y radius = .8];
\draw (0,2.05) node[above]{$z_1$};
\end{tikzpicture}
 + 
\begin{tikzpicture}[baseline=-.65ex,scale = 2]
\filldraw (-.5,0) circle (0.05);
\draw [thick] (-.5,0) to node[below]{$\alpha$} (0,0);
\filldraw (.5,0) circle (0.05);
\draw [thick] (0,0) to node[below]{$\beta$}(.5,0);
\filldraw (0,0) circle (0.05);
\draw (0,0) to (-.4,0.4) node[above left]{$x$};
\draw (0,0) to node[left]{$\beta$} node[right]{$\beta$} (0,1);
\draw ([shift=(-90:.5)]0,1.5) arc (-90:45:.5) node[midway]{\phantom{xl}$\beta$};
\draw [dotted]([shift=(45:.5)]0,1.5) arc (45:75:.5);
\draw ([shift=(75:.5)]0,1.5) arc (75:270:.5) node[midway]{$\beta$\phantom{xl}};
\draw (.4, 1.8) node[right]{$ > 0$};
\end{tikzpicture}
$$
Since we have no need for the term $\Psi_3^\U$ we only compute $\Psi_3^A$. It is obtained by applying the green homotopy to $\Phi_2^A$ and $\Phi_2^\U$, that is
$$
\Psi_3^A = h_0^A(\Phi_2^A + \Phi_2^\U)
$$
We obtain
$$
h_0^A(\Phi_2^A) \quad = \quad 
\begin{tikzpicture}[baseline = {(0,-1)}, scale = 2]
\filldraw (-.5,-.5) circle (0.05);
\draw [thick] (-.5,-.5) to node[below]{$\alpha$} (0,-.5);
\filldraw (.5,-.5) circle (0.05);
\draw [thick] (0,-.5) to node[below]{$\beta$}(.5,-.5);
\filldraw (0,-.5) circle (0.05);
\draw (0,-.5) to node[left]{$\alpha$} node[right]{$\beta$} (0,-.2);
\draw [dotted] (0,-.2) to (0, -.05);
\draw (0,-.05) to node[left]{$\alpha$} node[right]{$\beta$} (0,.25);
\draw (0,.25) to node[left]{$\alpha$} (-.1,.45);
\draw (0,.25) to node[right]{$\beta$} (.1,.45);
\draw [dotted] (-.1,.45) to (-.3, .85);
\draw [dotted] (.1, .45) to (.3, .85);
\draw [dashed] (0,0.25) ellipse [x radius = .35, y radius = .35];
\draw (0.4,0.25) node[right]{$z_0$};
\draw (-.3,.85) to [out=120, in=180] node[left]{$\alpha$} (0,1.3);
\draw (.3,.85) to [out=60, in=0] node[right]{$\beta$} (0,1.3);
\draw (0,1.3) to (0,1.5) node[above]{$x$};
\draw [dashed] (0,1.25) ellipse [x radius = .6, y radius = .6];
\draw (0,1.8) node[above]{$z_0$};
\end{tikzpicture}
$$
and\footnote{We apologize for the somewhat cumbersome notation, but hope that the meaning is still clear from the picture below.}
\begin{align*}
\pi \circ h_0^A(\Phi_2^\U) =& \alpha' \Sha (\beta''''''')^* e_i z_0(e^i \alpha'' (\beta'''''')^* x \beta' (\beta''''')^* ((\beta'''')^* e_k) \beta''' e^k) \\
    &+ \alpha' \Sha (\beta''''''')^* e_i z_0(e^i \alpha'' \Sha (\beta'''''')^* x \beta' \Sha (\beta''''')^* e_j) z_1( (\beta'' \Sha (\beta'''')^* e^j) \beta''') \\
    & + \alpha' \Sha (\beta'''')^* e_i z_0(e^i \alpha'' \Sha (\beta''')^* ((\beta'')^* e_k) \alpha''' x \beta' e^k)
\end{align*}
$$
h_0^A(\Phi_2^\U) \quad = \quad 
\begin{tikzpicture}[scale = 2,baseline={(0,-1)}]
\filldraw (-.5,-.5) circle (0.05);
\draw [thick] (-.5,-.5) to node[below]{$\alpha$} (0,-.5);
\filldraw (.5,-.5) circle (0.05);
\draw [thick] (0,-.5) to node[below]{$\beta$}(.5,-.5);
\filldraw (0,-.5) circle (0.05);
\draw (0,-.5) to node[left]{$\alpha$} node[right]{$\beta$} (0,-.1);
\draw [dotted] (0,-.1) to (0, .2);
\draw (0,0.2) to node[left]{$\alpha$} node[right]{$\beta$} (0,1);
\draw ([shift=(-90:.5)]0,1.5) arc (-90:45:.5) node[midway]{\phantom{xl}$\beta$};
\draw [dotted]([shift=(45:.5)]0,1.5) arc (45:75:.5);
\draw ([shift=(75:.5)]0,1.5) arc (75:180:.5) node[midway]{$\beta$\phantom{xll}};
\draw ([shift=(180:.5)]0,1.5) arc (180:270:.5) node[midway]{$\alpha$\phantom{xll}};
\draw (-.5,1.5) --(-.7,1.5) node[left] {$x$};
\draw [dashed] (0,1.1) ellipse [x radius = .9, y radius = 1.1];
\draw (0,2.15) node[above]{$z_0$};
\end{tikzpicture}
+
\begin{tikzpicture}[baseline={(0,-1)},scale = 2]
\filldraw (-.5,-.5) circle (0.05);
\draw [thick] (-.5,-.5) to node[below]{$\alpha$} (0,-.5);
\filldraw (.5,-.5) circle (0.05);
\draw [thick] (0,-.5) to node[below]{$\beta$}(.5,-.5);
\filldraw (0,-.5) circle (0.05);
\draw (0,-.5) to node[left]{$\alpha$} node[right]{$\beta$} (0,-.2);
\draw [dotted] (0,-.2) to (0, -.05);
\draw (0,-.05) to node[left]{$\alpha$} node[right]{$\beta$} (0,.25);
\draw (0,0.25) to (-.2,0.25) node[left]{$x$};
\draw (0,0.25) to node[left]{$\beta$} node[right]{$\beta$} (0,.55);
\draw [dotted] (0,.55) to (0, .7);
\draw (0,0.7) to node[left]{$\beta$} node[right]{$\beta$} (0,1);
\draw (0,1.5) circle (0.5);
\draw (.4, 1.8) node[right]{$\beta > 0$};
\draw [dashed] (0,1.35) ellipse [x radius = .7, y radius = .7];
\draw (0,2.0) node[above]{$z_1$};
\draw [dashed] (0,0.25) ellipse [x radius = .35, y radius = .35];
\draw (0.4,0.25) node[right]{$z_0$};
\end{tikzpicture}
 + 
\begin{tikzpicture}[baseline={(0,-1)},scale = 2]
\filldraw (-.5,-.5) circle (0.05);
\draw [thick] (-.5,-.5) to node[below]{$\alpha$} (0,-.5);
\filldraw (.5,-.5) circle (0.05);
\draw [thick] (0,-.5) to node[below]{$\beta$}(.5,-.5);
\filldraw (0,-.5) circle (0.05);
\draw (0,-.5) to node[left]{$\alpha$} node[right]{$\beta$} (0,-.1);
\draw [dotted] (0,-.1) to (0, .2);
\draw (0,0.2) to node[left]{$\alpha$} node[right]{$\beta$} (0,.6);
\draw (0,0.6) to node[left]{$\beta$} node[right]{$\beta$} (0,1);
\draw (0,0.6) to (-.2,0.6) node[left]{$x$};
\draw ([shift=(-90:.5)]0,1.5) arc (-90:45:.5) node[midway]{\phantom{xl}$\beta$};
\draw [dotted]([shift=(45:.5)]0,1.5) arc (45:75:.5);
\draw ([shift=(75:.5)]0,1.5) arc (75:270:.5) node[midway]{$\beta$\phantom{xl}};
\draw [dashed] (0,1.1) ellipse [x radius = .9, y radius = 1.1];
\draw (0,2.15) node[above]{$z_0$};
\draw (.4, 1.8) node[right]{$ > 0$};
\end{tikzpicture}
$$

\subsubsection{The "other" terms}
It remains to compute $H$ on the terms $\alpha|x|\beta \wedge Y$ and $\alpha|x|\beta \wedge \chi(M)\omega$.
In the first case, we obtain $H(\alpha|x|\beta \wedge Y)$ by applying the green homotopy,
$$
H^A(\alpha|x|\beta \wedge Y) = \pi (h_0^A(\alpha|x|\beta \wedge Y)),
$$
$$
h_0^A(\alpha|x|\beta \wedge Y) \quad = \quad
\begin{tikzpicture}[baseline={(0,-1)},scale = 2]
\filldraw (-.5,-.5) circle (0.05);
\draw [thick] (-.5,-.5) to node[below]{$\alpha$} (0,-.5);
\filldraw (.5,-.5) circle (0.05);
\draw [thick] (0,-.5) to node[below]{$\beta$}(.5,-.5);
\filldraw (0,-.5) circle (0.05);
\draw (0,-.5) to node[left]{$\alpha$} node[right]{$\beta$} (0,-.1);
\draw [dotted] (0,-.1) to (0, .2);
\draw (0,0.2) to node[left]{$\alpha$} node[right]{$\beta$} (0,.6);
\draw (0,0.6) to  (0,.9) node[above]{$e_i e^i$};
\draw (0,0.6) to (-.2,0.6) node[left]{$x$};
\draw [dashed] (0,.6) ellipse [x radius = .6, y radius = .6];
\draw (0,1.2) node[above]{$z_0$};
\end{tikzpicture}
\quad = \quad
\begin{tikzpicture}[baseline={(0,-1)},scale = 2]
\filldraw (-.5,-.5) circle (0.05);
\draw [thick] (-.5,-.5) to node[below]{$\alpha$} (0,-.5);
\filldraw (.5,-.5) circle (0.05);
\draw [thick] (0,-.5) to node[below]{$\beta$}(.5,-.5);
\filldraw (0,-.5) circle (0.05);
\draw (0,-.5) to node[left]{$\alpha$} node[right]{$\beta$} (0,-.1);
\draw [dotted] (0,-.1) to (0, .2);
\draw (0,0.2) to node[left]{$\alpha$} node[right]{$\beta$} (0,.6);
\draw (0,0.6) to  (0,.9);
\draw [dotted] (0,1.1) circle [radius = 0.2];
\draw (0,0.6) to (-.2,0.6) node[left]{$x$};
\draw [dashed] (0,.7) ellipse [x radius = .7, y radius = .7];
\draw (0,1.4) node[above]{$z_0$};
\end{tikzpicture}
$$
Note that this is the "missing" term in the last summand of $h_0^A(\Phi_2^\U)$ if we take the ordinary coproduct instead of the reduced one in that term.

The term $H^A(\alpha|x|\beta \wedge \chi(M)\omega)$ is obtained by applying the green homotopy, i.e.
$$
H^A(\alpha|x|\beta \wedge \chi(M)\omega) = \pi (h_0^A(\alpha|x|\beta \wedge \chi(M)\omega)),
$$
$$
h_0^A(\alpha|x|\beta \wedge \chi(M)\omega) \quad = \quad
\begin{tikzpicture}[baseline = {(0,1)},scale = 2]
\filldraw (-.5,0) circle (0.05);
\draw [thick] (-.5,0) to node[below]{$\alpha$} (0,0);
\filldraw (.5,0) circle (0.05);
\draw [thick] (0,0) to node[below]{$\beta$}(.5,0);
\filldraw (0,0) circle (0.05);
\draw (0,0) to node[left]{$\alpha$} node[right]{$\beta$} (0,1);
\draw (0,1) node[right]{$\chi(M)\omega$} to (-0.2,1.2) node[above left]{$x$};
\end{tikzpicture}
$$

\subsubsection{The collected terms}
Let us summarize all of the above in the following two pictures.
$$
H_{A\otimes A}\circ (\wedge \Th) \quad = \quad
\begin{tikzpicture}[baseline = {(0,0)}, scale = 2]
\draw [dotted] (-.8,0) to (-.5,0);
\draw (-.5,0) to node[above]{$\alpha$} (0,0);
\draw (0,0) to node[above]{$\beta$} (.5,0);
\draw (0,0) to (0, 0.2) node[above]{$x$};
\draw [dotted] (.5,0) to (.8,0);
\draw [dashed] (0,0.2) circle (0.7);
\draw (0,0.85) node[above]{$z_0$};
\draw (-1.1,0) to node[above]{$\alpha$}(-.8,0);
\filldraw (-1.1,0) circle (0.05);
\draw (.8,0) to node[above]{$\beta$}(1.1,0);
\filldraw (1.1,0) circle (0.05);
\end{tikzpicture}
$$
$$
H_{A}\circ (\wedge \Th)  \quad = \quad
\quad 
\begin{tikzpicture}[baseline = {(0,-1)}, scale = 2]
\filldraw (0,-.5) circle (0.05);
\draw (0,-.5) to node[left]{$\alpha$} node[right]{$\beta$} (0,-.2);
\draw [dotted] (0,-.2) to (0, -.05);
\draw (0,-.05) to node[left]{$\alpha$} node[right]{$\beta$} (0,.25);
\draw (0,.25) to node[left]{$\alpha$} (-.1,.45);
\draw (0,.25) to node[right]{$\beta$} (.1,.45);
\draw [dotted] (-.1,.45) to (-.3, .85);
\draw [dotted] (.1, .45) to (.3, .85);
\draw [dashed] (0,0.25) ellipse [x radius = .35, y radius = .35];
\draw (0.4,0.25) node[right]{$z_0$};
\draw (-.3,.85) to [out=120, in=180] node[left]{$\alpha$} (0,1.3);
\draw (.3,.85) to [out=60, in=0] node[right]{$\beta$} (0,1.3);
\draw (0,1.3) to (0,1.5) node[above]{$x$};
\draw [dashed] (0,1.25) ellipse [x radius = .6, y radius = .6];
\draw (0,1.8) node[above]{$z_0$};
\end{tikzpicture}
+
\begin{tikzpicture}[baseline = {(0,-1)}, scale = 2]
\filldraw (0,-.5) circle (0.05);
\draw (0,-.5) to node[left]{$\alpha$} node[right]{$\beta$} (0,-.1);
\draw [dotted] (0,-.1) to (0, .2);
\draw (0,0.2) to node[left]{$\alpha$} node[right]{$\beta$} (0,1);
\draw ([shift=(-90:.5)]0,1.5) arc (-90:45:.5) node[midway]{\phantom{xl}$\beta$};
\draw [dotted]([shift=(45:.5)]0,1.5) arc (45:75:.5);
\draw ([shift=(75:.5)]0,1.5) arc (75:180:.5) node[midway]{$\beta$\phantom{xll}};
\draw ([shift=(180:.5)]0,1.5) arc (180:270:.5) node[midway]{$\alpha$\phantom{xll}};
\draw (-.5,1.5) --(-.7,1.5) node[left] {$x$};
\draw [dashed] (0,1.1) ellipse [x radius = .9, y radius = 1.1];
\draw (0,2.15) node[above]{$z_0$};
\end{tikzpicture}
+
\begin{tikzpicture}[baseline = {(0,-1)}, scale = 2]
\filldraw (0,-.5) circle (0.05);
\draw (0,-.5) to node[left]{$\alpha$} node[right]{$\beta$} (0,-.2);
\draw [dotted] (0,-.2) to (0, -.05);
\draw (0,-.05) to node[left]{$\alpha$} node[right]{$\beta$} (0,.25);
\draw (0,0.25) to (-.2,0.25) node[left]{$x$};
\draw (0,0.25) to node[left]{$\beta$} node[right]{$\beta$} (0,.55);
\draw [dotted] (0,.55) to (0, .7);
\draw (0,0.7) to node[left]{$\beta$} node[right]{$\beta$} (0,1);
\draw (0,1.5) circle (0.5);
\draw (.4, 1.8) node[right]{$\beta > 0$};
\draw [dashed] (0,1.35) ellipse [x radius = .7, y radius = .7];
\draw (0,2.0) node[above]{$z_1$};
\draw [dashed] (0,0.25) ellipse [x radius = .35, y radius = .35];
\draw (0.4,0.25) node[right]{$z_0$};
\end{tikzpicture}
 + 
\begin{tikzpicture}[baseline = {(0,-1)}, scale = 2]
\filldraw (0,-.5) circle (0.05);
\draw (0,-.5) to node[left]{$\alpha$} node[right]{$\beta$} (0,-.1);
\draw [dotted] (0,-.1) to (0, .2);
\draw (0,0.2) to node[left]{$\alpha$} node[right]{$\beta$} (0,.6);
\draw (0,0.6) to node[left]{$\beta$} node[right]{$\beta$} (0,1);
\draw (0,0.6) to (-.2,0.6) node[left]{$x$};
\draw ([shift=(-90:.5)]0,1.5) arc (-90:45:.5) node[midway]{\phantom{xl}$\beta$};
\draw [dotted]([shift=(45:.5)]0,1.5) arc (45:75:.5);
\draw ([shift=(75:.5)]0,1.5) arc (75:270:.5) node[midway]{$\beta$\phantom{xl}};
\draw [dashed] (0,1.1) ellipse [x radius = .9, y radius = 1.1];
\draw (0,2.15) node[above]{$z_0$};
\end{tikzpicture}
+
\begin{tikzpicture}[baseline = {(0,0)},scale = 2]
\filldraw (0,0) circle (0.05);
\draw (0,0) to node[left]{$\alpha$} node[right]{$\beta$} (0,1);
\draw (0,1) node[right]{$\chi(M)\omega$} to (-0.2,1.2) node[above left]{$x$};
\end{tikzpicture}
$$

\begin{Rem}
In the 1-framed case the propagator was chosen compatible with the given 1-framing. Alternatively, since any 1-framings differ by a class $f \in H^{n-1}(M)$ we could simply add this term to $\Th$. The extra term in the homotopies above can then be absorbed in the middle term, if we now also take the ordinary (not the reduced) coproduct there, and define $z_1$ on tadpole graphs to be dual to $f$.
\end{Rem}

\subsubsection{Simpler formulas on a smaller model}\label{sec:simpler formulas}
Recall that the differentials on $B \otimes_A B$ and $A$ consist of an edge contraction differential $d_0$ and a differential depending on the $Com_\infty$ structure. The cohomology with respect to $d_0$ is concentrated in the purely trivalent part and given by $T\bar{H} \otimes H \otimes T\bar{H}$ and $H$, respectively. 

Since $H^{A\otimes A}$ and $H^A$ send trivalent graphs to trivalent ones, we can readily project onto these spaces to obtain maps
\begin{align*}
    H^{A \otimes A}\circ(\wedge \Th) :    T\bar{H} \otimes H \otimes T\bar{H} &\longrightarrow H \otimes H \\
    H^A\circ(\wedge \Th) : T\bar{H} \otimes H T\bar{H} &\longrightarrow H
\end{align*}
given by (essentially) the same formulas. Note that the term coming from $\wedge \chi(M) \omega$ vanishes.

The formulas simplify vastly if $x = 1$, namely
\begin{align}
\label{eqn:simplpsi}
   H^{A \otimes A}\circ(\alpha | 1 | \beta \wedge \Th) &= \epsilon(\alpha) \epsilon(\beta) e_i \otimes e^i \\
   \label{eqn:simplpsi2}
   H^A\circ( \alpha | 1 | \beta \wedge \Th) &= \epsilon(\alpha)e_i z_1(e^i \beta).
\end{align}

\section{Putting it all together, and proof of Theorem \ref{thm:main_3b} }\label{sec:thm3proof}
We are now ready to describe the string bracket and cobracket on the cyclic words $\bCyc(\bar H)$ computing $\bar H_{S^1}(LM)$ and in particular prove Theorem \ref{thm:main_3b}.

We will proceed essentially as in section \ref{sec:thm12proofs} for the computation of the product and coproduct.
Here we will use the dgca model $A$ of $M$ given by graphs as in section \ref{sec:graphical version}.
The cochain complex computing $H(LM)$ is then the reduced Hochschild complex
\[
\bar C(A)=B\otimes_{A^e} A = \left( \bigoplus_{k\geq 0}\bar A^{\otimes k} \otimes A, d_H \right).
\]

The cohomology $H=H^\bullet(M)$ forms a $\hCom_\infty$-algebra, and $A$ is its canonical resolution as a $\Com$-algebra.
Hence, there is a natural $\hCom_\infty$-map $H\to A$.
It follows that we also have a canonical map between the Hochschild complexes, cf. section \ref{sec:intro hochschild}
\[
\bar C(H) = \left( \bigoplus_{k\geq 0}\bar H^{\otimes k} \otimes H, d_H \right) \to \bar C(A).
\]

Unfortunately, we do not know a natural way to construct an (explicit) $\Com_\infty$ or $\hCom_\infty$-map $A\to H$.
Nevertheless, the natural projection of graded vector spaces $A\to H$ induces a well defined map 

$$
B \otimes_{A^e} A \supset (B \otimes_{A^e} A)^{\text{triv, cl}} \overset{\text{pr}}{\longrightarrow} T\bar{H} \otimes H.
$$
on the subspace of closed with respect to edge contraction and trivalent elements where the first inclusion is a quasi-isomorphism and the projection is a bijection.
\todo[inline]{The statement about bijectivity seems more clear for the dual picture in terms of graphs / IHX.}
\todo[inline]{...maybe add some detail here}
\begin{Rem}
The last formula is stating the fact that given a $\Com_\infty$ algebra $H$ one can construct Hochschild chains either by bar-cobar resolving $H$ as a $\Com_\infty$-algebra or by doing associative cobar and taking Hochschild chains on the resulting (cofree) coalgebra (and recalling that Hochschild chains of a (co)free (co)algebra $TV$ are given by $TV \otimes (V \oplus \R)$).
\end{Rem}
Since the Connes' operator is compatible with that projection the discussion about negative cyclic homology of section\ref{sec:intro hochschild} carries over to the model $(T\bar{H} \otimes H)$ and we obtain similarly a quasi-isomorphism
$$
\Cyc(\bar{A}) \supset \Cyc(\bar{A})^{\text{triv,cl}} \overset{\text{pr}}{\longrightarrow} \Cyc(\bar{H}),
$$
with the projection being a bijection.

\subsection{String product and bracket}
We now obtain a description of the string product (cohomology coproduct). Let us first consider the operation $H^\bullet(LM) \otimes H^\bullet(LM) \longleftarrow H^\bullet(\Map(8))$. By Lemma \ref{lem:hoprod} it is obtained from
$$
A \longleftarrow QA \overset{H_{A \otimes A}}{\longrightarrow} A \otimes A,
$$
by taking tensor product with $B \otimes B$ over $A^{\otimes 4}$. Note that $(B \otimes B) \otimes_{A^{\otimes 4}} QA = B \otimes_{A^e} B \otimes_A B \otimes_{A^e} B$ is the Hochschild homology on the dumbbell (whose handle consists of two edges) graph. We now define an inverse to the quasi-isomorphism $(B \otimes B) \otimes_{A^{\otimes 4}} QA \to (B \otimes B) \otimes_{A^{\otimes 4}} A$ induced by mapping the figure eight to the dumbbell,
$$
\begin{tikzcd}[row sep=1ex]
B \otimes_{A^e} A \otimes_{A^e} B \ar[r] & B \otimes_{A^e} B \otimes_A B \otimes_{A^e} B\\
\alpha|x|\beta \arrow[u,symbol=\in] \ar[r,mapsto]& \alpha'' |1| (\alpha''')^*\Sha \alpha' | x| \beta' \Sha (\beta''')^* |1| \beta'' \arrow[u,symbol=\in].
\end{tikzcd}
$$
Composing with the map $\Map(8) \to LM$ described in \eqref{equ:concatloop} we now obtain a description of the string product (cohomology coproduct) as the composition
$$
\begin{tikzcd}[row sep=1ex]
B \otimes_{A^e} A \ar[r] & B \otimes_{A^e} A \otimes_{A^e} B \ar[r] & B \otimes_{A^e} B \otimes_A B \otimes_{A^e} B \ar[r, "H^{A\otimes A}\circ(\wedge Th)"] & (B \otimes_{A^e} A) \otimes (A \otimes_{A^e} B) \\
\alpha x \arrow[u,symbol=\in] \ar[r,mapsto]& \alpha' x \alpha'' \arrow[u,symbol=\in] \ar[r,mapsto]& \alpha^{(2)} \otimes (\alpha^{(3)})^* \Sha \alpha^{(1)} x \alpha^{(4)} \Sha (\alpha^{(6)})^* \otimes \alpha^{(5)} \arrow[u,symbol=\in] \ar[r,mapsto]& 
\scriptstyle
\alpha^{(2)} \otimes H^{A\otimes A}((\alpha^{(3)})^* \Sha \alpha^{(1)} x \alpha^{(4)} \Sha (\alpha^{(6)})^* \wedge \Th) \otimes \alpha^{(5)}
\arrow[u,symbol=\in]
\end{tikzcd}\,.
$$
Since the homotopy $H_{A \otimes A}$ was constructed to respect trivalent graphs, we can readily restrict the above map to closed (with respect to edge contraction) trivalent graphs. Identifying the corresponding spaces with $T\bar{H} \otimes H$ we obtain the following
\begin{Thm}
Under the natural map $T\bar{H} \otimes H \to \Omega^\bullet(LM)$ the string product is given by
\begin{equation}
\begin{tikzcd}[row sep=1ex]
T\bar{H} \otimes H \ar[r] & (T\bar{H} \otimes H) \otimes (T\bar{H} \otimes H) \\
\alpha|x \ar[u, symbol=\in] \ar[mapsto]{r} 
&
z_0(e^i (\alpha^{(3)})^* \Sha \alpha^{(1)} x \alpha^{(4)} \Sha (\alpha^{(6)})^* e^j) (\alpha^{(2)}|e_i)\otimes (\alpha^{(5)}|e_j) \ar[u, symbol=\in] .
\end{tikzcd}
\end{equation}
\end{Thm}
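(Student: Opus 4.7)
The plan is to trace a trivalent closed representative of $\alpha|x \in T\bar H \otimes H$ through the explicit cochain-level zigzag for the string product spelled out immediately above the theorem. That zigzag is the composition
\begin{equation*}
B \otimes_{A^e} A \to B \otimes_{A^e} A \otimes_{A^e} B \to B \otimes_{A^e} B \otimes_A B \otimes_{A^e} B \xrightarrow{\ H^{A\otimes A}(-\wedge \Th)\ } (B \otimes_{A^e} A) \otimes (A \otimes_{A^e} B),
\end{equation*}
so it suffices to evaluate each arrow on a trivalent input and project the output onto the trivalent closed subcomplex, identified with $(T\bar H \otimes H)^{\otimes 2}$.

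First I would lift $\alpha|x$ to its image in $(\bar C(A))^{\text{triv,cl}} \subset B\otimes_{A^e}A$ via the inclusion recalled at the start of this section. Applying the concatenation map \eqref{equ:concatloop} and then the explicit handle-insertion quasi-isomorphism given in the text above the theorem produces the dumbbell element
\[
\alpha^{(2)} \otimes \bigl((\alpha^{(3)})^* \Sha \alpha^{(1)} \,|\, x \,|\, \alpha^{(4)} \Sha (\alpha^{(6)})^*\bigr) \otimes \alpha^{(5)},
\]
where the $\alpha^{(i)}$ are the six Sweedler components obtained by first deconcatenating $\alpha$ once and then three times on each resulting piece.

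Next I would apply $H^{A\otimes A}(-\wedge \Th)$ to the middle $B\otimes_A B$ factor. The key input is the explicit collected formula for $H_{A\otimes A}\circ(\wedge \Th)$ computed in section \ref{sec:graphical version}: on trivalent inputs it reduces to the single expression
\[
\gamma \,|\, x \,|\, \delta \ \longmapsto\ z_0\bigl(e^i\,\gamma\,x\,\delta\,e^j\bigr)\, e_i\otimes e_j,
\]
which is the refinement of \eqref{eqn:simplpsi} allowing $x\neq 1$ (the other summands in the picture of $H_{A\otimes A}\circ (\wedge \Th)$ all contain non-trivalent skeletal parts and thus vanish after projection). Plugging in $\gamma = (\alpha^{(3)})^* \Sha \alpha^{(1)}$ and $\delta = \alpha^{(4)} \Sha (\alpha^{(6)})^*$ yields the required $z_0$-coefficient, while the two residual handles $\alpha^{(2)}|1$ and $1|\alpha^{(5)}$ absorb $e_i$ and $e_j$ into their Hochschild base factors, producing $(\alpha^{(2)}|e_i)$ and $(\alpha^{(5)}|e_j)$.

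The main obstacle will be the bookkeeping of signs and the careful verification that projection onto the trivalent closed subcomplex commutes with each map in the zigzag. The latter relies on the fact that $H^{A\otimes A}$ was constructed to preserve trivalence, together with the compatibility of the reversed-word shuffle $\Sha$ and the Connes differential with the chosen subcomplex, so that the resulting formula descends along the projection $\mathrm{pr}\colon (B\otimes_{A^e}A)^{\text{triv,cl}} \to T\bar H \otimes H$ discussed at the start of this section.
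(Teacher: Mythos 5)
Your proposal reproduces the paper's approach essentially step by step. The zigzag you trace (inclusion into trivalent closed representatives, then the map $\alpha|x\mapsto\alpha'|x|\alpha''$ modelled by \eqref{equ:concatloop}, then the handle-insertion quasi-isomorphism into the dumbbell, then $H^{A\otimes A}(-\wedge\Th)$, then projection back to $T\bar H\otimes H$) is exactly the composition the paper spells out in the paragraphs preceding the theorem, and your final formula matches.

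The one place I would push back is your justification of the intermediate rule $\gamma|x|\delta\mapsto z_0(e^i\gamma x\delta e^j)\,e_i\otimes e_j$. You describe it as coming from "other summands in the picture of $H_{A\otimes A}\circ(\wedge\Th)$" containing non-trivalent skeletal parts, but the collected-terms display for $H_{A\otimes A}\circ(\wedge\Th)$ in section \ref{sec:graphical version} is a \emph{single} picture ($\pi\Psi_2^{A\otimes A}$), not a sum; the many-term display is for $H_A$. What actually makes the formula collapse to the stated expression is the interaction of the projection $\pi$ (stripping the outer $B$-tensor factors) with the Sweedler decomposition appearing in $\Psi_2^{A\otimes A}$, combined with the passage to the trivalent closed subcomplex and the projection $T\bar H\otimes H\otimes T\bar H\to H\otimes H$, which kills all terms whose outer Sweedler pieces carry positive bar degree. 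The upshot is the same, but the mechanism is absorption of the Sweedler components into the handles (already accounted for by $\alpha^{(2)}$ and $\alpha^{(5)}$) and cancellation under $\pi$, rather than vanishing of separate summands. Since the paper itself leaves this step at the level of "we can readily restrict," your reconstruction is about as explicit as the source; just be careful not to mischaracterize the single-summand structure of $H_{A\otimes A}$.
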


The string bracket (cohomology cobracket) is given by the composition \eqref{equ:defbracket} where the maps are modelled by Lemma \ref{lem:equivtohoch}. In particular, in the above formula $x=1$ and we get the simplified formula in
\begin{Thm}\label{thm:bracket}
Under the map $\Cyc{\bar H} \to \Omega^\bullet_{S^1}(LM)$ the string bracket is modelled by 
\begin{equation}
\begin{tikzcd}[row sep=1ex]
\Cyc{\bar H} \ar[r] & \Cyc{\bar H} \otimes \Cyc{\bar H} 
\\
\alpha_1 \dots \alpha_k \ar[u, symbol=\in] \ar[mapsto,r] 
& \sum_j \alpha_1 \dots \alpha_j e_i \otimes e^i \alpha_{j+1} \dots \alpha_k 
\ar[u, symbol=\in]
\end{tikzcd}
\end{equation}
\end{Thm}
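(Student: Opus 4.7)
The plan is to assemble the bracket from the already-established string product formula together with the Hochschild/cyclic translation of $\pi^*$ and $\pi_!$. Starting with a cyclic word $\alpha_1\cdots\alpha_k \in \Cyc(\bar H)$, I would first apply Lemma \ref{lem:equivtohoch} to identify $\pi^*$ with the inclusion $\iota$, producing the Hochschild chain $\sum_i \pm(\alpha_{i+1}\cdots\alpha_i \mid 1) \in B\otimes_{A^e}A$, where the sum runs over the $k$ cyclic rotations. After projecting through $\bCyc(\bar A)^{\text{triv,cl}} \cong \bCyc(\bar H)$ and the analogous identification on Hochschild chains, the relevant input to the string product is simply this sum of cyclic rotations paired with the unit.

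Next, I would apply the string product formula from the theorem immediately preceding Theorem \ref{thm:bracket}, namely
\[
\alpha \mid x \mapsto z_0\bigl(e^{i}\,(\alpha^{(3)})^{*}\Sha\alpha^{(1)}\,x\,\alpha^{(4)}\Sha(\alpha^{(6)})^{*}\,e^{j}\bigr)\,(\alpha^{(2)}\mid e_i)\otimes(\alpha^{(5)}\mid e_j),
\]
specialized to $x=1$ and to each rotation $\gamma=\alpha_{i+1}\cdots\alpha_i$. The key simplification invokes equation \eqref{eqn:simplpsi}: on $\bar H$, the augmentation $\epsilon$ annihilates any nonempty bar tensor, and a shuffle product is empty iff both factors are. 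This forces $\gamma^{(1)}=\gamma^{(3)}=\gamma^{(4)}=\gamma^{(6)}=\emptyset$ in every surviving summand, so the six-fold deconcatenation collapses to an ordinary two-fold split $\gamma=\gamma^{(2)}\gamma^{(5)}$, and the image of $(\gamma\mid 1)$ reduces to $\sum_{\text{splits}}(\gamma^{(2)}\mid e_i)\otimes(\gamma^{(5)}\mid e^{i})$.

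Finally, I apply $\pi_!\otimes\pi_!$ via the projection $pr$ of Lemma \ref{lem:equivtohoch}, which sends a Hochschild chain $(\beta\mid e_i)$ to the cyclic word $(\beta\,e_i)$. After the two-fold split of each cyclic rotation $\gamma=\alpha_{i+1}\cdots\alpha_{i+l}\cdot\alpha_{i+l+1}\cdots\alpha_i$, the resulting pair of Hochschild chains is mapped to $(\alpha_{i+1}\cdots\alpha_{i+l}\,e_i)\otimes(e^{i}\,\alpha_{i+l+1}\cdots\alpha_i)$, which, after passing to cyclic equivalence classes, depends only on the position of the split within the cyclic word. Collapsing the double sum over rotations and splits onto its cyclic representatives yields the claimed formula $\sum_j \alpha_1\cdots\alpha_j\,e_i\otimes e^{i}\,\alpha_{j+1}\cdots\alpha_k$.

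The main obstacle, and the only real content beyond cited computations, is verifying the combinatorial collapse: tracking that the naive sum over all $k$ cyclic rotations combined with the two-fold split projects cleanly onto a single sum over splittings of the original cyclic word, together with bookkeeping of the normalization of $pr\circ\iota$ and of signs coming from the odd degree shifts in the bar and cyclic complexes. Once these are handled, one also needs to note that no higher-loop contribution from $Z$ can enter, since $H^{A\otimes A}$ is by construction the tree-level (genus zero) part of the homotopy $H$ and because the general $z_0$-terms in the product formula are killed by the $\bar H$-augmentation when $x=1$.
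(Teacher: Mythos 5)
Your proposal follows the same route as the paper: express the bracket as $(pr \otimes pr) \circ (\text{product}) \circ \iota$ via Lemma \ref{lem:equivtohoch}, specialize the product formula of the preceding theorem to $x=1$, and invoke \eqref{eqn:simplpsi} together with the vanishing of the counit $\epsilon$ on nonempty shuffle words to kill the $z_0$-dressing and collapse the six-fold deconcatenation to a two-fold split. The paper's own proof is a single sentence pointing to exactly this composition, so yours is a faithful and more detailed reconstruction; the identification of \eqref{eqn:simplpsi} as the decisive simplification, and the argument that $\gamma^{(1)}=\gamma^{(3)}=\gamma^{(4)}=\gamma^{(6)}=\emptyset$ in every surviving term, are precisely what the paper is tacitly using.

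The one step that is weaker than you suggest is the final "combinatorial collapse," which you correctly flag as the main obstacle. After $\iota$ produces a sum over $k$ cyclic rotations and the product yields, for each rotation, a sum over two-fold splits, the terms are indexed by an \emph{ordered pair} of cut positions in the original cyclic word: one from the rotation and one from the split. Your assertion that the output "depends only on the position of the split within the cyclic word" conflates this pair with a single index, and the paper's single sum $\sum_j$ (which implicitly fixes one cut between $\alpha_k$ and $\alpha_1$) selects only a slice of the terms that actually appear. So either an overall normalization (a factor of $k$ from $pr\circ\iota$, or a factor of $2$ from the graded symmetry of $\sum_q e_q\otimes e^q$ under swapping which arc receives $e_q$ versus $e^q$) must be tracked, or the displayed $\sum_j$ is to be read as a sum over all cut pairs. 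The paper itself is silent on this, so you are not less careful than the source, but the identity is not as automatic as "collapsing onto cyclic representatives" makes it sound.
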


\subsubsection{A remark on homotopy automorphisms}
Let us record some basic facts about the above formula for later. For this, recall that $H^\bullet$ is a $\Com_\infty$ algebra, and as such has a cobar-dual dg Lie algebra $\mathbb{L} = \Lie(\bar{H}_{\bullet}[-1])$ with a differential that sends generators $\bar{H}_\bullet[-1] \to \Lie(\bar{H}_\bullet[-1])$. Considering the $\Com_\infty$-algebra $H^\bullet$ as an $A_\infty$-algebra and taking its (dual) cobar construction one obtains the dg algebra (the tensor algebra in $\bar{H}_\bullet[-1]$) $T(\bar{H}_\bullet) = U\mathbb{L}$. We identify
$$
\Cyc(\bar{H})^* = (T\bar{H}_\bullet)_\L = (T\bar{H}_\bullet)^\L
$$
with $\L$-(co)invariants of $T\bar{H}_\bullet$. Then the following double complex is contractible
$$
\begin{tikzcd}
(T\bar{H}_\bullet)^\L \ar[r] & T\bar{H}_\bullet \otimes \bar{H}_\bullet \ar[d, "{[\cdot, \cdot]}"] \\
& T\bar{H}_\bullet \otimes \R \ar[r] & (T\bar{H}_\bullet)_\L.
\end{tikzcd}
$$
Which we write as
$$
\begin{tikzcd}
\Cyc \bar{H}_\bullet[-1] \ar[r, "\pi_!"] & T\bar{H}_\bullet \otimes H_\bullet \ar[r] & \Cyc \bar{H}_\bullet,
\end{tikzcd}
$$
where the maps are dual to the ones in Lemma \ref{lem:equivtohoch}. By identifying $H_\bullet = H^\bullet[-n]$ using the inner product we obtain the contractible complex
\begin{equation}\label{equ:contrgysin}
\begin{tikzcd}
\Cyc \bar{H}_\bullet [-1] \ar[r, "\pi_!"] & T\bar{H}_\bullet \otimes H^\bullet[-n] \ar[r] & \Cyc \bar{H}_\bullet,
\end{tikzcd}
\end{equation}
where the middle term is Hochschild cohomology of the $Com_\infty$-algebra $H^\bullet$. One checks that the map
$$
\Cyc \bar{H}_\bullet \overset{\pi_!}{\longrightarrow} T\bar{H}_\bullet \otimes H^\bullet[1-n]
$$
is a map of Lie algebras that is furthermore compatible with the natural action of the Hochschild-Gerstenhaber Lie algebra $T\bar{H}_\bullet \otimes H^\bullet$. More concretely, we write $T\bar{H}_\bullet \otimes H^\bullet = T\bar{H}_\bullet \otimes \R \ \oplus \ T\bar{H}_\bullet \otimes \bar{H}^\bullet$ and identify the two summands with the space of inner derivations $\op{Inn}$ and all derivations $\Der(T\bar{H})$, respectively. Now the natural action of $(\op{Inn} \to \Der(T\bar{H}_\bullet))$ on $\Cyc(\bar{H}_\bullet)$ composed with the map $\pi_!$ coincides with the adjoint action (with respect to the string bracket) of $\Cyc(\bar{H}_\bullet)$ on itself.

Recall that there is a Hodge decomposition coming from writing $U\L = S\L = \oplus_k S^k\L$ (i.e. PBW) and the maps in complex \eqref{equ:contrgysin} respect that decomposition (can for instance be seen by identifying $T\bar{H}_\bullet \otimes \bar{H}_\bullet = \Omega^1_{nc}$ with non-commutative 1-forms). Let us use the notation
$$
    \Cyc \bar{H}_\bullet = \oplus_k \Cyc_{(k)} \bar{H}_\bullet = \oplus_k (S^k \L)_{\L}
$$
and similarly
$$
    H_\bullet^{S^1}(LM) = \oplus_k H_\bullet^{S^1}(LM)_{(k)}
$$
for the corresponding decomposition on the equivariant loop space.
Complex \eqref{equ:contrgysin} is then the direct sum of contractible complexes
$$
\begin{tikzcd}
\Cyc_{(k)} \bar{H}_\bullet [-1] \ar[r, "\pi_!"] & S^{k-1} \L \otimes H^\bullet[-n] \ar[r] & \Cyc_{(k-1)} \bar{H}_\bullet,
\end{tikzcd}
$$
the $k=2$ term of which is
$$
\begin{tikzcd}
\Cyc_{(2)} \bar{H}_\bullet [-1] \ar[r, "\pi_!"] & (\op{Inn} \to \Der(\L))[-n] \ar[r, "u \to u(\omega)"] & \Cyc_{(1)} \bar{H}_\bullet = \bar{H}_\bullet,
\end{tikzcd}
$$
where $(\op{Inn} \to \Der(\L))$ is the Harrison complex of $H^\bullet$. After shifting by $n-1$ the positive truncation gives us that
$$
(\Cyc_{(2)} \bar{H}_\bullet)[n-1]^+ \longrightarrow  (\op{Inn} \to \Der(\L))^+
$$
is a quasi-isomorphism (see also Proposition 65 in \cite{CamposWillwacher} for a direct proof).
We have thus obtained the following
\begin{Lem}
The identification $\pi_!$ of $(\op{Inn} \to \Der(\Lie\bar{H}_\bullet))^+$ with a direct summand of $\Cyc(\bar{H}_\bullet)$ intertwines the natural action of the Harrison complex on the cyclic complex with the adjoint action.
\end{Lem}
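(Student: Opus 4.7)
The plan is to deduce the lemma as an immediate corollary of the Lie-algebra compatibility statement from the paragraph just above, namely that $\pi_! : \Cyc(\bar H_\bullet) \to T\bar H_\bullet \otimes H^\bullet[1-n]$ is a Lie algebra map and that the natural Hochschild--Gerstenhaber action $a$ of $T\bar H_\bullet \otimes H^\bullet$ on $\Cyc(\bar H_\bullet)$ satisfies
\[
a(\pi_!(x), y) \;=\; [x, y] \qquad \text{for all } x, y \in \Cyc(\bar H_\bullet),
\]
where $[-, -]$ denotes the string Lie bracket on cyclic words.

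The first step is to verify this compatibility by a direct combinatorial calculation. Substituting the explicit formula $\pi_!(\alpha_1 \cdots \alpha_k) = \sum_i \pm (\alpha_{i+1} \cdots \alpha_i \mid 1)$ from Lemma \ref{lem:equivtohoch} into the Lie-derivative formula for the Hochschild action on a cyclic word $\beta_1 \cdots \beta_m$, then summing over cyclic positions and applying the Poincaré pairing $\epsilon$, one recovers exactly the cyclic Lie bracket formula
\[
[\alpha_1 \cdots \alpha_k, \beta_1 \cdots \beta_m] = \sum_{i,j} \pm \epsilon(\alpha_i, \beta_j)\,(\alpha_{i+1} \cdots \alpha_{i-1} \beta_{j+1} \cdots \beta_{j-1})
\]
from the Lie-bialgebra-of-cyclic-words section recalled earlier. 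The splitting $T\bar H_\bullet \otimes H^\bullet = \op{Inn} \oplus \Der(\L)$ corresponds naturally to whether the inserted class is the unit $1$ (giving the inner-derivation contribution) or lies in $\bar H^\bullet$ (giving the outer-derivation contribution), and both sides of the splitting match the corresponding pieces of the string bracket.

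The second step is to restrict: by the Hodge-decomposition discussion immediately preceding the lemma, $\pi_!$ restricts to a quasi-isomorphism
\[
\pi_! : \Cyc_{(2)} \bar H_\bullet[n-1]^+ \xrightarrow{\simeq} (\op{Inn} \to \Der(\L))^+,
\]
identifying the positively truncated Harrison complex with a direct summand of $\Cyc(\bar H_\bullet)$. For $x$ in this summand, the identity $a(\pi_!(x), y) = [x, y]$ reads precisely that the Harrison action of $\pi_!(x)$ on $y$ coincides with the adjoint action of $x$, which is the claim. The main obstacle is the sign bookkeeping in the first step: the Connes $B$-operator in the definition of $\pi_!$, the degree-$(-n)$ Poincaré pairing $\epsilon$, and the shifts $[1-n]$ and $[n-1]$ each carry sign conventions that must be reconciled consistently, although once fixed the unsigned combinatorial matching is immediate by inspection.
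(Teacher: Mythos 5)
Your proposal is correct and takes essentially the same approach as the paper: the paper's argument is precisely to assert (with "one checks") the compatibility $a(\pi_!(x),y)=[x,y]$ between the Hochschild--Gerstenhaber action and the string bracket, and then invoke the Hodge decomposition to restrict $\pi_!$ to a quasi-isomorphism onto the $k=2$ summand, which is exactly your two steps. You simply fill in the combinatorial verification the paper leaves implicit.
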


\subsection{String coproduct and cobracket}
As before, we will distinguish two cases, the $1$-framed case in which a nonvanishing vector field is chosen (and necessarily $\chi(M)=0$), and the reduced case for general $M$.
\subsubsection{The 1-framed case}
By combining Proposition \ref{prop:splitmap} and Lemma  \ref{lem:frcop} we obtain the description of the coproduct $\Omega^\bullet(LM) \longleftarrow \Omega^{\bullet +1 -n}(\Map(8))$ as
$$
\begin{tikzcd}
B \otimes_{A^{\otimes 2}} A & \ar[l, "s"] (B\otimes B) \otimes_{A^{\otimes 4}} (A \otimes A / A) & \ar[l, "{(H_{A\otimes A}, H_{A})}"] (B\otimes B) \otimes_{A^{\otimes 4}} (B \otimes_A B) \ar[d, "\simeq"] \\
&& (B \otimes B) \otimes_{A^{\otimes 4}} A.
\end{tikzcd}
$$
An inverse of the last quasi-isomorphism is given by
\begin{equation}
\label{eqn:8totheta}
\alpha \otimes \beta \otimes x \longmapsto \pm \alpha'' \otimes \beta'' \otimes \alpha''' \Sha \beta''' x \alpha' \Sha \beta',
\end{equation}
i.e. the homotopy equivalence of the mapping spaces of the figure 8 and the theta graph. Finally, we compose it with the map in diagram \eqref{equ:splitloop} to obtain the coproduct. We thus obtain the formula
$$
\begin{tikzcd}[row sep=1ex]
B \otimes_{A^{\otimes 2}} A & \ar[l] B \otimes_{A^{\otimes 2}} A \otimes B \otimes_{A^{\otimes 2}} A \\
 \begin{aligned} (\beta'' \Psi' \alpha'' | \Psi'')  \\ + (\beta'' |(H_{A} )(\alpha'' \Sha \beta''' xy \alpha' \Sha \beta' \wedge \Th)) \\ + (\alpha'' |(H_{A})(\alpha''' \Sha \beta'' xy \alpha' \Sha \beta' \wedge \Th))\end{aligned}   \arrow[u,symbol=\in] & \ar[l,mapsto] \arrow[u,symbol=\in] (\alpha | x) \otimes (\beta | y), \\
\end{tikzcd}
$$
where $\Psi' \otimes \Psi'' = H_{A \otimes A}(\alpha''' \Sha \beta''' x \alpha' \Sha \beta' \wedge \Th)$. We note that the formula again preserves $(B \otimes_{A^e} A)^{\text{triv, cl}} \subset B \otimes_{A^{\otimes 2}} A$, however, it does not at the moment commute with the projection $T\bar{H} \otimes H$, since we evaluate $H_{A\otimes A}$ and $H_A$ on a graph containing a 4-valent vertex. To obtain formulas one could precompose it with another homotopy (and another evaluation of $z_0$) similar to the ones constructed in the previous chapter. Since the final formula is not very enlightening we choose not to do so. Instead we note that this difficulty does not arise on the image of $\Cyc(A) \to B \otimes_{A^{\otimes 2}} A$. And hence from \eqref{eqn:simplpsi} we obtain the description of the cobracket as in Theorem \ref{thm:liebialg}.
\begin{Thm}\label{thm:cobracket framed}
The cobracket in the 1-framed case is described by the map
$$
\begin{tikzcd}[row sep=1ex]
\Cyc(\bar{H}) \otimes \Cyc(\bar{H}) \ar[r] & T\bar{H} \otimes H \ar[r, "pr"] & \Cyc(\bar{H}) \\
\alpha \otimes \beta \arrow[u,symbol=\in] \ar[r,mapsto]& \arrow[u,symbol=\in] (\alpha \omega_i \beta | \omega^i) + (\alpha' |\omega_i) z_1(\omega^i \alpha'' \beta) + (\beta' | \omega_i) z_1(\omega^i \beta'' \alpha).
\end{tikzcd}
$$
\end{Thm}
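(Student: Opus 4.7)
The plan is to proceed exactly as in the derivation of the string bracket in Theorem \ref{thm:bracket}. First I would use Lemma \ref{lem:equivtohoch} to model $\pi^*$ and $\pi_!$ in \eqref{equ:defcobracket} by the map $\iota$ from cyclic words to Hochschild chains (placing a unit in the $A$-slot and summing over cyclic rotations) and by the cyclic coinvariant projection $\mathrm{pr}$, reducing the task to computing the chain-level string coproduct on the Hochschild lifts $\iota(\alpha) \otimes \iota(\beta)$ and then cyclizing the output via $\mathrm{pr}$.

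For the coproduct itself I would invoke the explicit formula displayed just above the theorem statement, which has three summands: one coming from $H_{A \otimes A}$ and two from $H_A$ applied to the two ``ears'' of the figure-eight. Crucially, both Hochschild lifts $\iota(\alpha), \iota(\beta)$ have $x = y = 1$ in the $A$-slot with the remaining letters in $\bar H$, which is exactly the hypothesis under which the drastically simplified formulas \eqref{eqn:simplpsi} and \eqref{eqn:simplpsi2} apply. The $\epsilon$-factors in these formulas force all but a few iterated deconcatenation summands to vanish, leaving precisely three types of surviving contributions, one from each summand of the coproduct formula.

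Matching these against the three terms of the theorem, the first one produces $(\beta \omega_i \alpha \mid \omega^i)$, which after $\pi_!$ becomes the cyclic word of the first term $(\alpha \omega_i \beta \mid \omega^i)$, using the symmetry of the pairing and the rotation freedom built into $\iota$. The second and third summands produce contributions of the form $(\beta'' \mid \omega_i)\, z_1(\omega^i \alpha' \Sha \beta')$ and its $\alpha \leftrightarrow \beta$ analogue, and it remains to verify that, after $\pi_!$ and re-summation over the cyclic rotations coming from $\iota$, these collapse to the single-cut expressions $(\alpha' \mid \omega_i)\, z_1(\omega^i \alpha'' \beta)$ and $(\beta' \mid \omega_i)\, z_1(\omega^i \beta'' \alpha)$ of the theorem. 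The key input here is the cyclic invariance of $z_1$ as a function on the decorations around a one-loop graph, cf. section \ref{sec:ribbonGC}.

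I expect the main obstacle to be exactly this last reduction: the combinatorial identification of the double sum over rotations and cuts of both $\alpha$ and $\beta$ (giving shuffle products inside $z_1$) with a single sum over cuts of only one of the two cyclic words (giving plain concatenations, with the other cyclic word entering whole). Conceptually this is parallel to the analogous shuffle-collapse already implicit in the derivation of Theorem \ref{thm:bracket}, so it should be tractable but requires careful sign and combinatorial bookkeeping; nothing beyond what is present in the graph-complex computations of section \ref{sec:graphical version} is needed.
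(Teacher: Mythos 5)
Your proposal is essentially correct and follows the same route as the paper: reduce to Hochschild chains via $\iota$ (modelling $\pi^*$), feed through the explicit chain-level formula for the $\chi(M)=0$ coproduct displayed just above the theorem, exploit that $x=y=1$ on the image of $\iota$ so that the drastic simplifications \eqref{eqn:simplpsi} and \eqref{eqn:simplpsi2} apply (the $\epsilon$-factors kill almost all deconcatenation terms), and then project to cyclic words via $pr$ (modelling $\pi_!$). That is exactly the paper's argument; the paper is similarly terse at the last step, noting only that ``this difficulty does not arise on the image of $\Cyc(A)\to B\otimes_{A^{\otimes 2}}A$'' and that the theorem then follows ``from \eqref{eqn:simplpsi}.''

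One remark on the point you flag as the main obstacle. You worry that the surviving $H_A$-contributions carry a shuffle $\alpha'\Sha\beta'$ inside $z_1$, which must somehow collapse to the plain concatenations $\omega^i\alpha''\beta$ and $\omega^i\beta''\alpha$ in the stated formula. In fact, the $\epsilon$-factor in \eqref{eqn:simplpsi2} forces \emph{both} components of the relevant deconcatenation shuffle to be empty, not just one; together with the two-sided nature of $\iota$ (summing over rotations and leaving a single $1$ in the $A$-slot) this means that, for each surviving term, one of the two words (e.g.\ $\alpha$) actually enters $z_1$ as a single unbroken rotation, while the other (e.g.\ $\beta$) supplies both the outer cyclic word and the rest of the $z_1$-argument. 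After passing to cyclic coinvariants and using the cyclic invariance of $z_1$ as you correctly observe, the sum over rotations of $\alpha$ is redundant and one is left with the single concatenated word of the theorem — the genuine shuffle terms you are worried about never appear because of the vanishing of $\epsilon$ on nonempty shuffles. This is the same mechanism that makes the bracket formula of Theorem \ref{thm:bracket} shuffle-free. So the ``obstacle'' is real bookkeeping but not an identity of shuffle algebra; your approach is sound and matches the paper's.
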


\subsubsection{The reduced case}
Similarly by Propositions \ref{prop:splitmap} and Lemma \ref{lem:frcop} we obtain a description of the coproduct $\Omega^\bullet(LM) \longleftarrow \Omega^{\bullet +1 -n}(\Map(8), F)$ as
$$
\begin{tikzcd}[column sep= huge]
B \otimes_{A^{\otimes 2}} A & \ar[l, "s"] \Tot\left(    \begin{tikzpicture}
        \node(1) at (0,.5) {$(B \otimes B) \otimes_{A^{\otimes 4}} (A \otimes A)$};
        \node(3) at (0,-.5) {$(B \overset{A}{\oplus} B) \otimes_{A^{\otimes 2}} A$};
        \draw[->](1)-- (3);
    \end{tikzpicture} \right) &[25pt]
    \ar[l, "1 \otimes H_{A \otimes A}"', start anchor={[yshift=3ex]},end anchor={[yshift=3ex]}]
    \ar[l, "{(\epsilon \otimes 1 - 1 \otimes \epsilon) \otimes H_A}" description, start anchor={[yshift=2ex]},end anchor={[yshift=-2ex]}]
    \ar[l, "1", start anchor={[yshift=-3ex]},end anchor={[yshift=-3ex]}]
    \Tot\left(    \begin{tikzpicture}
        \node(1) at (0,.5) {$(B \otimes B) \otimes_{A^{\otimes 4}} (B \otimes_A B)$};
        \node(3) at (0,-.5) {$(B \overset{A}{\oplus} B) \otimes_{A^{\otimes 2}} A$};
        \draw[->](1)-- (3);
    \end{tikzpicture} \right) \ar{d}{\simeq}
  \\
& & \Tot\left(    \begin{tikzpicture}
        \node(1) at (0,.5) {$(B \otimes B) \otimes_{A^{\otimes 4}} A$};
        \node(3) at (0,-.5) {$(B \overset{A}{\oplus} B) \otimes_{A^{\otimes 2}} A$};
        \draw[->](1)-- (3);
    \end{tikzpicture} \right)
\end{tikzcd}
$$
Since the homotopy inverse \eqref{eqn:8totheta} is a strict right inverse and the boundary map in the upper complex factors through this projection, we can use it again to invert the last quasi-isomorphism in the diagram. Moreover, since $(B \otimes B) \otimes_{A^{\otimes 4}} A \to (B \overset{A}{\oplus} B) \otimes_{A^{\otimes 2}} A$ is onto, we can simplify the last term by taking the kernel of this map, that is our model for $\Omega^\bullet(\Map(8), F)$ is $(B^{\geq 1} \otimes B^{\geq 1}) \otimes_{A^{\otimes 4}} A$. We then see that the composite
$$
(B^{\geq 1} \otimes B^{\geq 1}) \otimes_{A^{\otimes 4}} A \to B \otimes_{A^{\otimes 2}} A,
$$
is the same as in the 1-framed case, and hence so are the formulas.
\begin{Thm}\label{thm:cobracket reduced}
The reduced cobracket is described by the map
$$
\begin{tikzcd}[row sep=1ex]
\bCyc(\bar{H}) \otimes \bCyc(\bar{H}) \ar[r] & T\bar{H} \otimes H \ar[r, "pr"] & \bCyc(\bar{H}) \\
\alpha \otimes \beta \arrow[u,symbol=\in] \ar[r,mapsto]& \arrow[u,symbol=\in] (\alpha e_i \beta | e^i) + (\alpha' | e_i) z_1( e^i \alpha'' \beta) + (\beta' | e_i) z_1( e^i \beta'' \alpha) &.
\end{tikzcd}
$$
\end{Thm}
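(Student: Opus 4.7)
The plan is to trace two cochain-level cocycles representing classes in $\bCyc(\bar H) \otimes \bCyc(\bar H) \hookrightarrow \bar H^\bullet_{S^1}(LM)\otimes \bar H^\bullet_{S^1}(LM)$ through the cochain realization of the defining zigzag \eqref{equ:defcobracket}, analogously to the computation already carried out for Theorem \ref{thm:cobracket framed} but using the relative Lemma \ref{lem:hocop} in place of Lemma \ref{lem:frcop}. Starting from cyclic words $\alpha,\beta \in \bCyc(\bar H)$, I will first promote them to Hochschild cycles via the map $\iota$ of Lemma \ref{lem:equivtohoch}, apply $\pi^*\otimes \pi^*$ to land in $\bar C(A)\otimes \bar C(A)$, and then push through the cochain realization of the string coproduct as set up at the beginning of section \ref{sec:coprodrel} and \ref{sec:thm12proofs}.

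The main computation is then to evaluate the composition
\[
\bar C(A)\otimes \bar C(A) \to (B\otimes B)\otimes_{A^{\otimes 4}} A \to (B\otimes B)\otimes_{A^{\otimes 4}} (B\otimes_A B) \to \Tot\bigl( (B\otimes B)\otimes_{A^{\otimes 4}} (A\otimes A) \to (B\oplus_A B)\otimes_{A^{\otimes 2}} A \bigr) \to B\otimes_{A^e} A,
\]
where the third arrow is the homotopy commuting square obtained in section \ref{sec:graphical version} with components $(H_{A\otimes A}, H_A)\circ(\wedge \Th)$, and the final arrow is the splitting map of Proposition \ref{prop:splitmap}, realized explicitly by the right inverse \eqref{eqn:8totheta} used to pass from figure-8 to theta-graph cochains. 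As in the discussion leading to Theorem \ref{thm:cobracket framed}, on the image of $\iota$ the middle factor of every concatenation is the unit $1\in A$, so the evaluations of $H_{A\otimes A}$ and $H_A$ on the resulting theta-type graph land in the trivalent closed subcomplex and collapse dramatically.

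At this point I invoke the simplifications \eqref{eqn:simplpsi}, \eqref{eqn:simplpsi2}: the first replaces the $H_{A\otimes A}$-term in the splitting by $\epsilon(\alpha)\epsilon(\beta)\,e_i\otimes e^i$, producing the ``strict'' term $(\alpha\, e_i\, \beta\mid e^i)$ after reassembling the theta graph via \eqref{eqn:8totheta} and projecting to $\bCyc(\bar H)$ by the projection described at the start of section \ref{sec:thm3proof}. The two $H_A$ contributions, symmetric under swapping $\alpha$ and $\beta$, yield precisely the $z_1$-dependent terms $(\alpha'\mid e_i)\,z_1(e^i\alpha''\beta) + (\beta'\mid e_i)\,z_1(e^i\beta''\alpha)$, since the only surviving graphical term in the collection for $H_A\circ(\wedge \Th)$ once the central input is a unit is the one containing $z_1$ (the $z_0$-only terms vanish by the $\hCom_\infty$ differential / closedness of the inputs, and the $\chi(M)\omega$-term contributes trivially after composition with the projection $A\to H$ on reduced cohomology).

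Finally, I apply $\pi_!$ from Lemma \ref{lem:equivtohoch} to land back in $\bCyc(\bar H)$, and check that the resulting formula is well-defined on the reduced cyclic complex — here I use the observation at the end of section \ref{sec:GCM} that the partition function $Z$ may be taken trivalent, so that decorations by $1^*$ appear only through the leading piece $z$, i.e.\ the $\Com_\infty$-structure, leaving $\bCyc(\bar H^*)$ invariant. The main obstacle is the bookkeeping of signs and of the many terms in $H_A\circ(\wedge \Th)$: a priori one has five summands (as displayed at the end of section \ref{sec:graphical version}), and the argument that four of them vanish on the image of $\iota$ requires care with the shuffles $\alpha'\Sha(\alpha''')^*$ and the closedness with respect to the edge-contraction differential $d_0$. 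Once this is verified, the formula displayed in the statement of Theorem \ref{thm:cobracket reduced} follows directly, completing the proof and thereby Theorem \ref{thm:main_3b}.
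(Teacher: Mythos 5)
Your plan is correct in outline and reaches the right answer, but you re-derive everything from scratch rather than using the structural shortcut that the paper takes. The paper's actual proof of Theorem~\ref{thm:cobracket reduced} is a reduction to the $\chi(M)=0$ case (Theorem~\ref{thm:cobracket framed}): it writes down the zigzag for the relative coproduct, observes that the last quasi-isomorphism can be inverted using the strict right inverse \eqref{eqn:8totheta} because the boundary map of the total complex factors through the projection, and then notes that $(B \otimes B) \otimes_{A^{\otimes 4}} A \to (B \oplus_A B) \otimes_{A^{\otimes 2}} A$ is onto, so that $(B^{\geq 1} \otimes B^{\geq 1}) \otimes_{A^{\otimes 4}} A$ is a model for $\Omega^\bullet(\Map(8),F)$. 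On that kernel model the composite to $B \otimes_{A^{\otimes 2}} A$ is \emph{identically} the one appearing in the 1-framed case, so the formulas of Theorem~\ref{thm:cobracket framed} carry over verbatim. Your version instead threads elements of $\bCyc(\bar H)$ through the relative zigzag coming from Lemma~\ref{lem:hocop} and re-evaluates the contracting homotopy; this works, and your identification of Lemma~\ref{lem:hocop} as the relevant input is correct (the paper's citation of Lemma~\ref{lem:frcop} in section~7.2.2 appears to be a slip), but it obliges you to re-verify the vanishings that the reduction argument bypasses.

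One point where your justification is looser than the paper's: you argue that the $\chi(M)\omega$-summand of $H_A \circ (\wedge\Th)$ ``contributes trivially after composition with the projection $A \to H$ on reduced cohomology.'' The paper's statement (end of section~\ref{sec:graphical version}) is that this term vanishes already on the trivalent model; and in the proof of Theorem~\ref{thm:cobracket reduced} the $\chi(M)\omega$ term never even arises, precisely because the kernel model makes the relative composite coincide with the framed one, where $\chi(M)=0$ holds by hypothesis. Your route requires actually showing this term is killed, and ``projection to $H$'' alone does not do that (the class $\omega$ is a nonzero element of $H^n$). You should either check the vanishing on the trivalent model by hand, or adopt the paper's observation that on $(B^{\geq 1}\otimes B^{\geq 1})\otimes_{A^{\otimes 4}}A$ the map is already the framed one. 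Also, the inversion of the final quasi-isomorphism inside the total complex is not automatic: the paper explicitly argues that the boundary map of the upper row factors through the projection, so that the strict right inverse \eqref{eqn:8totheta} extends to a chain map of total complexes; you would need to make this verification as well.

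Overall: same strategy and same ingredients, but the paper's reduction to Theorem~\ref{thm:cobracket framed} is cleaner and removes the term-by-term bookkeeping that you flag at the end as the main obstacle. Your approach buys nothing over the paper's here, and the two imprecisions above (the $\chi(M)\omega$ term, the total-complex inversion) are genuine loose ends you would have to close.
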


\newcommand{\FFM}{\FM^{fr}}
\newcommand{\Aut}{\mathrm{Aut}}

\section{Discussion -- string topology, invariants of manifolds, and the diffeomorphism group}\label{sec:discussion}

There is the hope that string topology can be used to study manifolds, and that it is sensitive to the structure of $M$, beyond its (rational) homotopy type.
One can ask two related questions:
\begin{enumerate}
    \item How strong is the invariant of $M$ given by (a version of) $H(LM)$ or $H_{S^1}(LM)$, together with some chosen set of algebraic (string topology-)operations?
    \item A version of the diffeomorphism group acts on (a version of) $H(LM)$, preserving the chosen set of algebraic structure. Hence we get a map from the diffeomorphism group of $M$ to the algebraically defined (homotopy) automorphism group of $H(LM)$, with the algebraic structure considered. How non-trivial is this map?
\end{enumerate}
Of course, similar questions can in principle be asked on the chain level, but we only consider (co)homology here.
In this paper we connect string topology to configuration spaces. Hence it makes sense to compare the "strength" of string topology to similar invariants build from configuration spaces of points, via the Goodwillie-Weiss manifold calculus.
More concretely, to every manifold $M$ one can associate the framed configuration spaces $\FFM_M$, as right modules over the (fulton-MacPherson-version of the) framed little disks operad $\FFM_n$. The homotopy type of those forms an invariant, and it is acted upon by the diffeomorphisms of $M$. More generally, we can truncate at arity (number of points) $\leq k$ and get a tower of approximations to the diffeomorphism group, and a hierarchy of invariants:
\begin{equation*}\label{eq:towers}
\begin{tikzcd}
 & & & & \Diff(M)\ar{d} \\
T_1\Diff(M):=\Aut^h_{\FFM_n,\leq 1}(\FFM_M)
\ar{d}
&
\cdots \ar{d}
\ar{l} &
T_k\Diff(M):=\Aut^h_{\FFM_n,\leq k}(\FFM_M)
\ar{d}\ar{l} &
\cdots \ar{d}
\ar{l} &
T_\infty\Diff(M):=\Aut^h_{\FFM_n}(\FFM_M)
\ar{l}\ar{d}
\\
\Aut^h_{(\FFM_n)^\Q,\leq 1}((\FFM_M)^\Q)
&
\cdots
\ar{l} &
\Aut^h_{(\FFM_n)^\Q,\leq k}((\FFM_M)^\Q)
\ar{l} &
\cdots
\ar{l}
&
\Aut^h_{(\FFM_n)^\Q}((\FFM_M)^\Q)
\ar{l}
\end{tikzcd}.
\end{equation*}

Here $\Aut^h_{\FFM_n}(-)$ refers to homotopy automorphisms of the right $\FFM_n$-module and $\Aut^h_{\FFM_n,\leq k}(-)$ to its arity-$k$-truncated version, seeing only configuration spaces of $\leq k$ points.
We also added the corresponding rationalized versions in the lower row. 
We could also consider instead the non-framed configuration spaces $\FM_M$, as a right modules over the fiberwise $E_n$-operads $\FM_n^M$, or replace the diagram with an version for configuration categories \cite{BoavidaWeiss}.
In any case the arrows in the diagram above are not well understood, and the usual embedding calculus convergence estimates do not apply.

\subsection{Diffeomorphism invariance}
Let $G = \Diff_1(M)$ be the identity component of the group of diffeomorphisms. We consider $G$-invariance of the (reduced) coproduct. In the definition \eqref{eqn:defredcop} every map except the last one is a map of spaces, and moreover equivariant with respect to $G$. That is there is a commuting diagram of spaces
$$
\begin{tikzcd}
G \times (I, \partial I) \times (LM,M) \ar[r]\ar[d] & G \times \frac{\Map(\bigcirc_2) / \Map^\prime(8)}{ F/ F|_{UTM}} \ar[d] & \ar[l, "\simeq"] \ar[d] G \times  \frac{\Map(8) / \Map^\prime(8)}{F / F|_{UTM}} \\
 (I, \partial I) \times (LM,M) \ar[r]& \frac{\Map(\bigcirc_2) / \Map^\prime(8)}{ F/ F|_{UTM}} & \ar[l, "\simeq"]  \frac{\Map(8) / \Map^\prime(8)}{F / F|_{UTM}}
\end{tikzcd}
$$
where the vertical arrows are the action. Thus we get an induced diagram in homology. The last step in the definition of the coproduct was taking cap product with a Thom class in $H^n(M, UTM)$. For this recall that the cap product is natural and the map $\frac{\Map(8) / \Map^\prime(8)}{F / F|_{UTM}} \to M/UTM$ is $G$-equivariant, it thus follows that
$$
\begin{tikzcd}
H_\bullet(G \times \frac{\Map(8) / \Map^\prime(8)}{F / F|_{UTM}}) \ar[d]\ar[r, "\cap"] & \op{Hom} (H^n(G \times (M/UTM)), H_\bullet( G \times \frac{\Map(8)}{F}) ) \ar[d]\\
H_\bullet(\frac{\Map(8) / \Map^\prime(8)}{F / F|_{UTM}}) \ar[r, "\cap"] & \op{Hom} (H^n(M/UTM), H_\bullet( \frac{\Map(8)}{F} ) )
\end{tikzcd}
$$
commutes. For degree reasons the image of the Thom class $\Th$ under $H^n(M/UTM) \to H^n(G \times M/UTM)$ is simply $1 \otimes \Th$. Thus by compatibility of capping with products we get that
$$
\begin{tikzcd}
H_\bullet(G) \otimes H_\bullet(\frac{\Map(8) / \Map^\prime(8)}{F / F|_{UTM}})) \ar[d]\ar[r, "\iid \otimes \cap \Th"] & \op{Hom} H_\bullet(G) \otimes H_\bullet( \frac{\Map(8)}{F}) \ar[d]\\
H_\bullet(\frac{\Map(8) / \Map^\prime(8)}{F / F|_{UTM}}) \ar[r, "\cap \Th"] & H_\bullet(\frac{\Map(8)}{F})
\end{tikzcd}
$$
Since the map $\Map(8) \to LM \times LM$ is again $G$-equivariant we obtain that
$$
\begin{tikzcd}
H_\bullet(G) \otimes H_\bullet(LM,M) \ar[d] \ar[r] & H_\bullet(LM,M) \ar[d] \\
H_\bullet(G) \otimes H_\bullet(LM, M) \otimes H_\bullet(LM, M) \ar[r] & H_\bullet(LM, M) \otimes H_\bullet(LM, M).
\end{tikzcd}
$$
commutes. Finally, we obtain similar commuting diagrams for the maps in the Gysin sequence associated to $LM \to LM_{S^1}$ and $G \times LM \to G \times LM_{S^1}$, respectively, which follows from naturality of the Gysin sequence.
Thus we obtain
$$
\begin{tikzcd}
H_\bullet(G) \otimes \bar{H}^{S^1}_\bullet(LM) \ar[d] \ar[r] & \bar{H}^{S^1}_\bullet(LM) \ar[d] \\
H_\bullet(G) \otimes \bar{H}^{S^1}_\bullet(LM) \otimes \bar{H}^{S^1}_\bullet(LM) \ar[r] & \bar{H}^{S^1}_\bullet(LM) \otimes \bar{H}^{S^1}_\bullet(LM).
\end{tikzcd}
$$
Or in formula
$$
\delta(g.x) = g' \delta'(x) \otimes g'' \delta''(x)
$$
for any $g \in H_\bullet(G)$ and $x \in \bar{H}^{S^1}(LM)$ where $\Delta(g) = g' \otimes g''$ is the image of $g$ under the diagonal $H_\bullet(G) \to H_\bullet(G) \otimes H_\bullet(G)$. In particular, primitive elements in $H_\bullet(G)$ act by derivations of the string cobracket. Recall that by Milnor-Moore (Theorem 21.5 in \cite{FHT2}) $H_\bullet(G)$ is the universal enveloping algebra of the Lie algebra $\pi_*(G) \otimes \R$.
From which it follows that we get
$$
( \pi_*(G), [,]) \to Der_{[,],\delta}(\bar{H}^{S^1}(LM)).
$$
That is $\pi_*(\Diff_1(M))$ acts on $\bar{H}^{S^1}(LM)$ by derivations of the Lie bialgebra structure. That the cobracket is preserved will give us a non-trivial term measuring the difference between $\Diff_1(M)$ and $aut_1(M)$, where $aut_1(M)$ is the identity component of the monoid of self-maps. Namely, we have
$$
\begin{tikzcd}
\pi_*(\Diff_1(M)) \ar[r] \ar[d] & Der_{[,],\delta}(\bar{H}^{S^1}(LM)) \ar[d] \\
\pi_*(aut_1(M)) \ar[r] & End(\bar{H}^{S^1}(LM)).
\end{tikzcd}
$$
Let now $M$ be simply-connected, such that we have rational models.

Then the lower arrow factors through
$$
\pi_*(aut_1(M)) \to H(\Der(\Lie\bar{H}_\bullet)/\op{Inn}(\Lie\bar{H}_\bullet)) \to End(H^{S^1}(LM)),
$$
where $\Lie\bar{H}_\bullet$ (the cobar construction of the $\Com_\infty$ coalgebra $H_\bullet$) is the underlying Quillen model for our free model $A$ for cochains on $M$. 
As noted in Lemma \ref{lem:compaction} we can identify $H((\Der(\Lie\bar{H}_\bullet)/\op{Inn}(\Lie\bar{H}_\bullet))^+)$ with the ($>n$-degree part of the) lowest Hodge-degree summand of $H^{S^1}(LM)$ such that the action becomes the adjoint action.
It follows that the map $\pi_*(aut_1(M)) \to End(H^{S^1}(LM))$ factors as the adjoint action of the string bracket
$$
\pi_*(aut_1(M)) \to H^{S^1}(LM)_{(2)}[n-1] \to End(H^{S^1}(LM)),
$$

In particular, we see that while the string bracket is preserved by every element in $\pi_*(aut_1(M))$, in contrast the string cobracket is (generally) not. More concretely, let $x \in \pi_*(aut_1(M)) = H^{S^1}_{(2)}(LM)$. Using the 5-term relations for the Lie bialgebra we then see that $[x, \cdot]$ is a derivation for the Lie bialgebra structure if and only if
$$
[y, \delta(x)] = 0 \text{ for all $y \in \bar{H}^{S^1}(LM)$}.
$$
\todo[inline]{Mention that Graph-complexes and/or Burghelea say there is a map $\pi_*(aut(X)) \to H_{S^1}(LM)_{\Z_2}$ and that $\Diff$ is in its kernel. We do not get that it is in the kernel, but that it acts trivially on $H_{S^1}(LM) \otimes H_{S^1}(LM)$. This is still a non-trivial condition, and in good cases no weaker.}

\subsection{Example}
To get a concrete example we proceed as follows. Let us restrict ourselves further to the case where $\chi(M) = 0$ and we have chosen a fixed non-vanishing vector field $\xi$. Moreover, we only consider $\Diff^\xi_1(M)$ and $aut^\xi(M)$ that preserve $\xi$. (More precisely, the action of $\Diff$ on the lift of the Thom class in $H^{n-1}(UTM) \to H^n(M,UTM)$ determines a cocycle $H_\bullet(G) \to H^\bullet(M)$ and we take the kernel of that cocycle.) Thus we can work in the non-reduced setting, which is $\Diff^\xi_1(M)$-equivariant by a similar argument as above.  And can in particular apply the counit $H^{S^1}(LM) \to \R$ on one factors of the above equation.
We refer to \cite{AKKN} for the fact that by applying the counit we obtain
$$
(1 \otimes \epsilon) \delta(x) \in Z( H^{S^1}(LM,M) , [,]).
$$
To illustrate the nontriviality of this condition. Let us now take $M = (S^n \otimes S^n)^{\#g}$ for $n$ odd. It is formal and coformal (i.e. the cohomology algebra is Koszul). In that case the Lie bialgebra is the same as the one obtained from a surface (or the one constructed by Schedler) see also \cite{AKKN}. Moreover, it has trivial center and we thus obtain that $\pi_*(\Diff(M))$ lie in the kernel of $(1 \otimes \epsilon) \delta$. 
We moreover identify
$$
\pi_*(aut_1(M)) \otimes \R = \op{OutDer}^+(\mathbb{L}),
$$
where $\mathbb{L} = \Lie( x_1,\dots,x_g, y_1,\dots,y_g)/(\omega))$ and $\omega = \sum_i [x_i, y_i]$ (see for instance Theorem 5.7. in \cite{BerglundMadsen}). The action on the framing gives a map $\pi_*(aut_1(M)) \to H$ and hence $\pi_*(aut^\xi_1(M))$ differs from $\pi_*(aut_1(M))$ by at most a factor of $H$.
\todo[inline]{The map $Der(L) \to H$ should just be the "divergence" on tripods. It's probably too cumbersome to actually prove that. We only need that $Der(L)^+$ is much larger than $H$ and still contains elements with non-zero divergence.}
The elements that preserve the cobracket are now
$$
\op{ker} (1\otimes \epsilon)\delta = \{ u \in \op{OutDer}^+(\mathbb{L}) \ | \ (1\otimes \epsilon) \delta(u) = 0 \},
$$
and $(1\otimes \epsilon) \delta(u)$ can be identified with some non-commutative divergence as in \cite{AKKN}. Note that this last Lie algebra is very closely related to $\mathfrak{krv}^{g,1}$ (see loc. cit. for a definition). By definition
$$
\op{ker} (1\otimes \epsilon)\delta \to \op{OutDer}^+(\mathbb{L}) \overset{(1\otimes \epsilon)\delta}{\longrightarrow} U\mathbb{L}/[U\mathbb{L}, U\mathbb{L}]
$$
is exact in the middle and one checks that the second map is non-trivial. Thus we indeed get obstructions for $\pi_*(\Diff)$ in this case.

\subsection{Outlook, and a conjecture}
Note that in our definition the string topology operations on the rational (or real) cohomology of the free loop space depend on $M$ only the rational (or real) homotopy type of the configuration spaces of up to two points on $M$.
Hence string topology, to the extend considered here, contains at most as much "information" about $M$ and $\Diff(M)$ as can be obtained through at the second stage of the rationalized Goodwillie-Weiss tower, appearing at the beginning of this section.

In general, one might conceivably consider a larger set of string topology operations, as has been done for example in \cite{ChasSullivan2}. One might also lift these operations to the (say rational) (co)chain level.
We raise the admittedly very vague conjecture, that nevertheless string topology is at most as strong an invariant of the manifold as can be obtained via the configuration spaces.
More concretely, we conjecture that the higher string topology operations on rational cohomology or cochains of $LM$ can all be described using the rational homotopy type of $\FM_M$ as a module over the fiberwise little disks operad $\FM_n^M$. (Alternatively, this can be replaced by the configuration category of $M$ of \cite{BoavidaWeiss}.)
Furthermore, in this manner one will likely obtain a morphism of topological groups
\[
\Diff(M)\to \text{``}\Aut^h_{\text{string topology}}(C(LM))\text{''}
\]
from the diffeomorphism group to the group of homotopy automorphisms of the cochains on $LM$, preserving all string topology operations. The precise formulation of the right-hand side is to be found, there are several nice subsets of operations one can consider.
We conjecture that at least for for simply connected $M$ the above morphism of topological groups factors through the homotopy automorphisms of the rationalized version $\FM_M$ as a module over the fiberwise little disks operad $\FM_n^M$. Equivalently we may also take $\Aut^h_{(\FFM_n)^\Q}((\FFM_M)^\Q)$.
The homotopy type of the latter object will be determined in \cite{WillwacherTBD}, with the result that a Lie model is given by a slight extension of the graph complex $\GC_M$ of section \ref{sec:GCM}, which also naturally acts on our model $\Graphs_M$ of the configuration spaces.
Hence the appearance of the one-loop term $Z_1$ in the formulas of Theorems \ref{thm:cobracket framed} and \ref{thm:cobracket reduced} above can be seen as a reflection of the appearance of configuration spaces in string topology.

In the other direction, there have been several results or announced results that string topology only depends on the (rational) homotopy type of $M$, for some subsets of string topology operations considered.
Our work indicates to the contrary that string topology, with the correct set of operations, can be used to access information of $M$ beyond its rational homotopy type.

\end{document}